\documentclass{amcjoucc}



\usepackage{hyperref}
\usepackage[capitalize]{cleveref}
\usepackage{graphicx,amssymb}

\numberwithin{equation}{section}

\newtheorem{lem}[equation]{Lemma}
\newtheorem{prop}[equation]{Proposition}
\newtheorem{thm}[equation]{Theorem}
\newtheorem{cor}[equation]{Corollary}

\Crefname{prop}{Proposition}{Propositions}
\Crefname{cor}{Corollary}{Corollaries}

\theoremstyle{definition}
\newtheorem{notation}[equation]{Notation}
\newtheorem{eg}[equation]{Example}
\newtheorem{defn}[equation]{Definition}
\newtheorem{defns}[equation]{Definitions}
\newtheorem{rem}[equation]{Remark}
\newtheorem{rems}[equation]{Remarks}
\newtheorem*{ack}{Acknowledgments}

\crefformat{rems}{Remark~#2#1#3}

\crefmultiformat{eg}{Examples~#2#1#3}{ and~#2#1#3}{, #2#1#3}{, and~#2#1#3}

 \newcounter{case}
 \newenvironment{case}[1][\unskip]{\refstepcounter{case}
 \medskip \noindent \textbf{Case \thecase.}\em\ #1\ }{\unskip\upshape}
 \renewcommand{\thecase}{\arabic{case}}
 
  \newenvironment{case*}[1][\unskip]{\par \medbreak \em#1}{\unskip\upshape}

\crefformat{case}{Case~#2#1#3}
\Crefname{case}{Case}{Cases}

\crefformat{figure}{Figure~#2#1#3}

\newcommand{\pref}[1]{\textup(\ref{#1}\textup)}
\newcommand{\csee}[1]{\textup(see \cref{#1}\textup)}
\newcommand{\ccf}[1]{(cf.\ \cref{#1})}

\newcommand{\fullcref}[2]{\cref{#1}\pref{#1-#2}}
\newcommand{\fullcsee}[2]{(see \fullcref{#1}{#2})}
\newcommand{\fullref}[2]{\ref{#1}\pref{#1-#2}}

\DeclareMathOperator{\Aut}{Aut}
\DeclareMathOperator{\Cay}{Cay}
\newcommand{\Cayd}{\mathop{\overrightarrow\Cay}}
\DeclareMathOperator{\PSL}{PSL}

\usepackage[mathscr]{euscript}
\newcommand{\A}{\mathscr{A}}
\newcommand{\C}{\mathscr{C}}
\renewcommand{\L}{\mathscr{L}}
\newcommand{\N}{\mathscr{N}}

\newcommand{\iso}{\cong}
\newcommand{\normal}{\triangleleft}
\newcommand{\boxprod}{\mathbin{\raise1pt\hbox{\tiny$\square$}}}
\newcommand{\CHAR}{\mathrel{\raise1pt\hbox{$\blacktriangleleft$}}}
\def\edge{\mathbin{\hbox{\vrule height 3 pt depth -2.25pt width 10pt}}}

\renewcommand{\setminus}{\smallsetminus}
\newcommand{\ZZ}{\mathbb{Z}}

\newbox\circlebox
\setbox\circlebox\hbox{\large$\bigcirc$}
\newcommand{\circleit}[1]{%
	\hbox to \wd\circlebox{\hbox to 0pt{\copy\circlebox\hss}\hss$#1$\hss}}
\newcommand{\circlei}{\circleit{i}}

\newcommand{\omitit}[1]{}

\makeatletter
\newcommand{\noprelistbreak}{\@nobreaktrue\nopagebreak\smallskip} 
\makeatother

\newcommand{\MR}[1]{\href{http://www.ams.org/mathscinet-getitem?mr=#1}{MR\,#1}}

\makeatletter
\newcommand{\twoauthordatanofn}[5]{%
\advance\@authorcount by 2%
\xdef\@emails{\@emails\ifnum\@authorcount>2 , \fi {#4} (#1)}%
\xdef\@emails{\@emails, {#5} (#2)}%
\xdef\@authors{\@authors\ifnum\@authorcount>2 , \fi {#1}}
\xdef\@authors{\@authors, {#2}}
\begin{center}%
{\large #1, #2}\\[1mm]
{\itshape #3}
\end{center}}
\makeatother

\makeatletter
\renewenvironment{frontmatter}
{\thispagestyle{empty}}
{\vskip 20pt%
\blfootnote{\raggedright \ifnum\@authorcount=1\textit{E-mail address:}\else\textit{E-mail addresses:}\fi ~\@emails}%
\if@proofsline\global\linenumbers\fi%
}
\def\@oddrunninghead{On colour-preserving automorphisms of Cayley graphs}
\def\@evenrunninghead{A.\,Hujdurovi\'c, K.\,Kutnar, D.\,W.\,Morris, J.\,Morris}
\renewenvironment{abstract}
{
\begin{center}\small \today \end{center}\vskip 1mm
\hrule height 0.25pt
\vskip 5pt
\noindent \textbf{Abstract}
\vskip 5pt
}
{
\vskip 5pt
\noindent \textit{\small Keywords:~\@keywords}

\vskip 3pt
\noindent \textit{\small Math.\ Subj.\ Class.: \@msc}
\vskip 5pt
\hrule height 0.25pt
} 
\makeatother

\newcommand{\SeeAppendix}[1]{%
\marginparwidth=40pt
\marginparsep=10pt
\reversemarginpar
\marginpar{\vskip-2.5\baselineskip \it \raggedright \small \sf 
\hyperlink{AppendixForRef}{See \cref{#1} for some additional details.}}}

\begin{document}

\begin{frontmatter}

\titledata{On colour-preserving automorphisms \\ of Cayley graphs}{}

\twoauthordatanofn{Ademir Hujdurovi\'c}{Klavdija Kutnar}%
{University of Primorska, FAMNIT, Glagolja\v ska~8, 6000 Koper, Slovenia}%
{ademir.hujdurovic@upr.si}{klavdija.kutnar@upr.si}%

\twoauthordatanofn{Dave Witte Morris}{Joy Morris}%
{Department of Mathematics and Computer Science, University of Lethbridge, 
\\ Lethbridge, Alberta, T1K~3M4, Canada}%
{dave.morris@uleth.ca}{joy.morris@uleth.ca}%

\keywords{Cayley graph, automorphism, colour-preserving, colour-permuting}
\msc{05C25}

\begin{abstract}
We study the automorphisms of a Cayley graph that preserve its natural edge-colouring. More precisely, we are interested in groups~$G$, such that every such automorphism of every connected Cayley graph on~$G$ has a very simple form: the composition of a left-translation and a group automorphism. We find classes of groups that have this property, and we determine the orders of all groups that do not have this property. We also have analogous results for automorphisms that permute the colours, rather than preserving them.
\end{abstract}

\end{frontmatter}

\tableofcontents

\section{Introduction}

\begin{defns} 
Let $S$ be a subset of a group~$G$, such that $S = S^{-1}$. (All groups and all graphs in this paper are finite.)
	\begin{itemize}
	\item The \emph{Cayley graph} of~$G$, with respect to~$S$, is the graph $\Cay(G;S)$ whose vertices are the elements of~$G$, and with an edge $x \edge xs$, for each $x \in G$ and $s \in S$. 	
	\item $\Cay(G;S)$ has a natural edge-colouring. Namely, each edge of the form $x \edge xs$ is coloured with the set $\{s,s^{-1}\}$. (In order to make the colouring well-defined, it is necessary to include $s^{-1}$, because $x \edge xs$ is the same as the edge $xs \edge x$, which is of the form $y \edge ys^{-1}$, with $y = xs$.)
	\end{itemize}
Note that $\Cay(G;S)$ is connected if and only if $S$ generates~$G$.
Also note that a permutation~$\varphi$ of~$G$ is a colour-preserving automorphism of $\Cay(G;S)$ if and only if we have $\varphi(xs) \in \bigl\{ \varphi(x)\, s^{\pm1} \bigr\}$, for each $x \in G$ and $s \in S$.
\end{defns}

For each $g \in G$, the left translation $x \mapsto gx$ is a colour-preserving automorphism of $\Cay(G;S)$. In addition, if $\alpha$ is an automorphism of the group~$G$, such that $\alpha(s) \in \{s^{\pm1}\}$ for all $s \in S$, then $\alpha$~is also a colour-preserving automorphism of $\Cay(G;S)$.
We will see that, in many cases, every colour-preserving automorphism of $\Cay(G;S)$ is obtained by composing examples of of these two obvious types.

\begin{defns} 
Let $G$ be a group.
\noprelistbreak
	\begin{enumerate}
	\item A function $\varphi \colon G \to G$ is said to be \emph{affine} if it is the  composition of an automorphism of~$G$ with left translation by an element of~$G$. This means $\varphi(x) = \alpha(gx)$, for some $\alpha \in \Aut G$ and $g \in G$.
	\item A Cayley graph $\Cay(G;S)$ is \emph{CCA} if all of its colour-preserving automorphisms are affine functions on~$G$.
(CCA is an abbreviation for the \emph{Cayley Colour Automorphism} property.)
	\item We say that $G$ is \emph{CCA} if every connected Cayley graph on~$G$ is CCA.
	\end{enumerate}
\end{defns}

Here are some of our main results:

\begin{thm} \label{CCASummary} \ 
\noprelistbreak
	\begin{enumerate}
	\item There is a non-CCA group of order~$n$ if and only if $n \ge 8$ and $n$~is divisible by either $4$, $21$, or a number of the form $p^q \cdot q$, where $p$ and~$q$ are prime\/ \csee{OrderOfNonCCA,OrderOfNonCCADivisiblePNotOdd}.
	\item \label{CCASummary-abel}
	An abelian group is not CCA if and only if it has a direct factor that is isomorphic to either $\ZZ_4 \times \ZZ_2$ or a group of the form $\ZZ_{2^k} \times \ZZ_2 \times \ZZ_2$, with $k \ge 2$ \csee{abelian}.
	\item Every dihedral group is CCA \csee{DihedralCCA}.
	\item No generalized dicyclic group or semidihedral group is CCA, except $\ZZ_4$ \csee{4NotCCA}.
	\item \label{CCASummary-odd}
	Every non-CCA group of odd order has a section that is isomorphic to either the nonabelian group of order\/~$21$ or a certain generalization of a wreath product\/ \textup(called a semi-wreathed product\/\textup) \csee{OddNotCCAHas}.
	\item If $G \times H$ is CCA, then $G$ and~$H$ are both CCA\/ \csee{NotCCAxAny}. The converse is not always true\/ \textup(for example, $\ZZ_4 \times \ZZ_2$ is not CCA\/\textup), but it does hold if\/ $\gcd \bigl( |G|, |H| \bigr) = 1$ \csee{GxHPrime}.
	\end{enumerate}
\end{thm}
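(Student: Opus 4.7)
The statement is a summary that packages together six substantive results, so my plan is to reduce the theorem to establishing each part separately by the tools developed later in the paper, rather than attempting a single unified argument. The unifying observation throughout is that affine maps are always colour-preserving, so everything reduces to deciding whether there exist \emph{extra}, non-affine, colour-preserving automorphisms, and to classifying these when they arise. After fixing a colour-preserving automorphism $\varphi$ that fixes the identity (possible after composing with a translation), the defining condition $\varphi(xs) \in \{\varphi(x) s^{\pm 1}\}$ gives a signed word-by-word action of $\varphi$ along walks in $\Cay(G;S)$, and almost every argument will manipulate this signed action.

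For parts (2)--(4), I would carry out explicit structural analyses. In the abelian case, the only freedom in a colour-preserving automorphism comes from inverting individual generators along walks, and by a careful walk-tracking argument one can show this produces a group automorphism unless $G$ has a $\ZZ_2\times\ZZ_2$ ``slack factor'' sitting on top of an even cyclic group, yielding the classification in (2); the $\ZZ_4\times\ZZ_2$ case and the $\ZZ_{2^k}\times\ZZ_2\times\ZZ_2$ case should each be witnessed by an explicit non-affine involution on a well-chosen generating set. For dihedral groups (3), I would exploit the fact that every reflection is an involution to pin down $\varphi$ on the reflections first and then propagate to the rotations. For generalized dicyclic and semidihedral groups (4), I would use the unique central involution to build a non-affine colour-preserving automorphism that negates the cyclic part of the group while fixing the outer coset appropriately, showing these groups are never CCA beyond the trivial $\ZZ_4$.

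The hardest parts will be (1) and (5). For (5), the odd-order case, I would proceed by induction on $|G|$: assuming $G$ is a minimal non-CCA group of odd order, reduce to analyzing the action on a minimal normal subgroup $N$ and its quotient $G/N$, both of which are CCA by minimality; the structure of an ``obstructing'' colour-preserving automorphism should then force either a Frobenius-like action matching the nonabelian group of order $21$, or a wreath-type structure that gives the semi-wreathed section. Part (1) requires both a construction side---exhibiting a non-CCA group of every order of the listed form, most likely using direct products with known non-CCA pieces together with semi-wreathed products over $\ZZ_p \wr \ZZ_q$---and a universal side, showing that no group of any other order can be non-CCA, which will use (2)--(5) together with part (6). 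Finally, (6) I would prove by letting a colour-preserving automorphism $\varphi$ of $\Cay(G\times H;S)$ act, noting that each coordinate projection gives a partial colour-preserving action, and that under the coprimality hypothesis the orbits of $\varphi$ on the natural ``rows'' and ``columns'' are forced to be permuted independently, reducing to the CCA property of each factor. The main obstacle will be the inductive classification in (5) and, building on it, the sharp order characterization in (1); the other parts are essentially structural computations.
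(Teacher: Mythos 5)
Your overall plan---reduce the theorem to six separate results and prove each with its own argument---is exactly how the paper proceeds (the theorem is literally a summary of \cref{OrderOfNonCCA}, \cref{abelian}, \cref{DihedralCCA}, \cref{4NotCCA}, \cref{OddNotCCAHas}, \cref{NotCCAxAny} and \cref{GxHPrime}), so the decomposition is not the issue. The issue is that at the points where the real work lies, the mechanisms you propose either fail or stop short of the crux. Most concretely, for part (4) the map you describe, ``negate the cyclic part while fixing the outer coset,'' is not colour-preserving for a general generalized dicyclic group: for $G=\mathrm{Dic}(A,y)=\langle x,A\rangle$ with $A=\langle a\rangle\iso\ZZ_8$ and a natural connected generating set such as $\{a^{\pm1},x^{\pm1}\}$ (or all of $xA$), the edge from $a$ into the coset $xA$ is sent to a pair that is not an edge of that colour unless $a^2\in\{e,y\}$; it works for $Q_8$ only because every element of $\ZZ_4$ satisfies that condition, which is why the small example misleads. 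The nearby map that \emph{is} colour-preserving on every Cayley graph (fix $A$, multiply $xA$ on the right by $y$) is the group automorphism $a\mapsto a$, $x\mapsto xy$, hence affine and useless as a witness. What actually works (\cref{notCCA-structure}) is a \emph{local} modification: multiply a single coset $xH$ of the proper subgroup $H=\langle (S\setminus T)\cup\{\tau\}\rangle$ by the central involution $\tau$, with colour-preservation of the boundary edges resting on the choice of generators $t\in T$ satisfying $t^2=\tau$. Without a construction of this shape, part (4), and with it the ``divisible by $4$'' portion of part (1), is not established.

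For part (5), inducting on a minimal normal subgroup of $G$ with ``$N$ and $G/N$ CCA by minimality'' does not engage the actual difficulty, because the obstruction lives in the full group $\A^0$ of colour-preserving automorphisms rather than in $G$ itself. The paper's induction runs over subgroups $\A$ of $\A^0$ containing the regular representation $\widehat G$, uses that vertex stabilizers are $2$-groups (\cref{StabIs2Grp}) so that $\widehat G$ is a $2$-complement, takes a minimal normal subgroup of $\A$, and in the nonabelian case must invoke the classification of finite simple groups (a nonabelian simple group with a $2$-complement is $\PSL(2,p)$ with $p$ a Mersenne prime); the order-$21$ section is precisely the $2$-complement of $\PSL(2,7)$, and no ``Frobenius-like action on $G$'' heuristic will produce it. Similarly, the hard half of part (1)---showing that a group of square-free order $n\equiv 2\pmod 4$, with $n$ not divisible by $21$ or by any $p^q\cdot q$, is (strongly) CCA---requires the structure $G\iso\ZZ_k\rtimes\ZZ_\ell$ for square-free order together with the reductions of \cref{AssumeSPrimePower} and \cref{ModCyclic} and the two-case analysis in \cref{OrderOfNonCCA}; ``use (2)--(5) together with (6)'' does not cover this case. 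Your sketches for (2), (3) and (6) are consistent in spirit with the paper's arguments (reduction to $p$-groups and maximal-order direct summands; reduction to the abelian subgroup as in \cref{DihCCALem}; the coprime powering trick $g^k=\pi_1(g)$), but as written they are outlines rather than proofs, so the proposal as a whole has genuine gaps at (4), (5) and the even square-free case of (1).
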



We also consider automorphisms of $\Cay(G;S)$ that permute the colours, rather than preserving them:

\begin{defns} \ 
\noprelistbreak
	\begin{itemize}
	\item An automorphism $\alpha$ of a Cayley graph $\Cay(G;S)$ is \emph{colour-permuting} if it respects the colour classes; that is, if two edges have the same colour, then their images under~$\alpha$ must also have the same colour. This means there is a permutation $\pi$ of~$S$, such that $\alpha(gs) \in \{ \alpha(g) \, \pi(s)^{\pm1} \}$ for all $g \in G$ and $s \in S$
	 (and $\pi(s^{-1}) = \pi(s)^{-1}$).
	\item A Cayley graph $\Cay(G;S)$ is \emph{strongly CCA} if all of its colour-permuting automorphisms are affine functions on~$G$.
	\item We say that $G$ is \emph{strongly CCA} if every connected Cayley graph on~$G$ is strongly CCA.
	 \end{itemize}
\end{defns}

Note that every strongly CCA group is CCA, since colour-preserving automorphisms are colour-permuting (with $\pi$ being the identity map on~$S$). The converse is not true. For example, every dihedral group is CCA (as was mentioned above), but it is not strongly CCA if its order is of the form $8k + 4$ \csee{DihIff}. However, the converse does hold for at least two natural families of groups:

\begin{thm} \label{CCA<>strong} \ 
\noprelistbreak
A CCA group is strongly CCA if either:
	\begin{enumerate}
	\item \label{CCA<>strong-abel}
	it is abelian \csee{abelian},
	or
	\item it has odd order \csee{OddCCA->strong}.
	\end{enumerate}
\end{thm}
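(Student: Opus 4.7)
The two parts of the theorem are stated as forward references to \cref{abelian} and \cref{OddCCA->strong}, so the task here is to describe the strategy those later sections will implement. In both cases the plan is the same at a high level: given a CCA group $G$ satisfying the hypothesis and a colour-permuting automorphism $\alpha$ of a connected Cayley graph $\Cay(G;S)$, compose with a left translation so that $\alpha(e) = e$, and then prove $\alpha \in \Aut G$. Writing $\alpha(s) = \pi(s)^{\epsilon(s)}$ with $\pi$ a permutation of~$S$ and $\epsilon(s) \in \{\pm 1\}$, the key is to promote $\pi$ to a group automorphism.

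For part~\textup{(\ref{CCA<>strong-abel})}, the plan is to show that the characterization of CCA abelian groups in \cref{CCASummary}\ref{CCASummary-abel} is in fact a characterization of strongly CCA abelian groups, so the abelian case is handled together with the analysis in \cref{abelian}. The strategy is to use commutativity to reduce to a standard form: since products in~$G$ can be reordered freely, the map $\pi\colon S \to S$ together with the signs $\epsilon(s)$ extends unambiguously to a function on all of~$G$, and one then verifies that the obstructions to this extension being a homomorphism are exactly the obstructions to CCA. Since we are assuming $G$ is CCA abelian, those obstructions vanish.

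For part~\textup{(\ref{CCA<>strong-odd})}, the goal is to find a genuine group automorphism $\beta$ of~$G$ with $\beta(s) = \pi(s)$ for all $s \in S$. If such a $\beta$ exists, then $\beta^{-1}\alpha$ satisfies $\beta^{-1}\alpha(gs) \in \{\beta^{-1}\alpha(g)\, s^{\pm1}\}$, so $\beta^{-1}\alpha$ is colour-preserving; the CCA hypothesis then makes it affine, and hence $\alpha$ is affine. The odd-order hypothesis enters in two crucial ways: first, no element of~$G$ is an involution, so each colour class $\{s,s^{-1}\}$ has exactly two elements and there is no ambiguity about which element of the image colour class is $\pi(s)$ versus $\pi(s)^{-1}$; second, the group of colour-permuting automorphisms fixing~$e$ has a normal subgroup of colour-preserving ones, and considering iterates $\alpha^k$ together with the absence of $2$-torsion allows a square-root/exponent argument to propagate local control from $S$ to all of~$G$.

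The main obstacle is constructing $\beta$ in part~(\ref{CCA<>strong-odd}): showing the set-theoretic permutation $\pi$ of~$S$ respects the defining relations of~$G$. I expect this to be handled by induction on~$|G|$ using the CCA hypothesis on proper sections, combined with the odd-order structure (for instance, to solve equations of the form $x^2 = g$ uniquely). The abelian case, by contrast, is essentially bookkeeping, because commutativity collapses most of the combinatorial complications that make the general odd-order argument delicate.
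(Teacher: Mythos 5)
There is a genuine gap in both halves, and in the odd-order case your route diverges from what actually works. For odd order, your entire plan rests on producing a group automorphism $\beta$ of~$G$ realizing the colour permutation $\pi$ on~$S$, but you never construct it --- you only ``expect'' an induction on $|G|$ to do so --- and this is not a minor detail: once $\alpha(e)=e$, the existence of such a $\beta$ is essentially the assertion being proved (strong CCA says $\alpha$ itself is such an automorphism, up to inverting some generators), so deferring it leaves the proof empty. The paper's argument \csee{OddCCA->strong} avoids any such construction: since the graph is CCA, $\widehat G$ is normal in the group $\A^0$ of colour-preserving automorphisms \ccf{Aff=N(G)}; the stabilizer $\A^0_e$ is a $2$-group \csee{StabIs2Grp}, so with $|G|$ odd and $\A^0 = \widehat G \, \A^0_e$, $\widehat G$ is the unique largest normal subgroup of odd order, hence characteristic in~$\A^0$; and $\A^0$ is normal in the full colour-permuting group $\A^\bullet$ (being the kernel of the action on colours), so $\widehat G \normal \A^\bullet$, which by \cref{Aff=N(G)} is exactly strong CCA. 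Your remark that the colour-preserving automorphisms form a normal subgroup gestures in this direction, but the two decisive ingredients --- $\widehat G \normal \A^0$ coming from the CCA hypothesis, and oddness forcing $\widehat G$ to be characteristic via the $2$-group stabilizer --- are absent, and the proposed ``square-root/exponent argument'' is not a substitute.

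For the abelian case, the assertion that $\pi$ together with the signs $\epsilon(s)$ ``extends unambiguously to a function on all of~$G$'' is unjustified: the sign in $\alpha(xs)=\alpha(x)\,\pi(s)^{\pm1}$ may depend on~$x$, and well-definedness (compatibility with the relations of~$G$) is precisely the difficulty, not something commutativity gives for free. What the paper actually proves \csee{abelian} is that a colour-permuting automorphism fixing the identity of an abelian group must be a group automorphism unless $G$ has a direct factor $\ZZ_4\times\ZZ_2$ or $\ZZ_{2^k}\times\ZZ_2\times\ZZ_2$; this is a nontrivial induction using the structure theorem (choosing generators from~$S$ of maximal order, peeling off cyclic direct summands, and analysing how a sign discrepancy forces elements of order~$4$ and hence one of the forbidden factors), and those factors are non-CCA by explicit examples, so CCA abelian implies strongly CCA. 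Calling this ``essentially bookkeeping'' and asserting that the obstructions ``are exactly the obstructions to CCA'' names the desired conclusion without supplying the argument, so as written neither part of your proposal contains a proof.
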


\begin{rems} \label{CCARems} \ 
\noprelistbreak
	\begin{enumerate}
	\item It follows from Theorems \fullref{CCASummary}{abel} and \fullref{CCA<>strong}{abel} 
that every cyclic group is strongly CCA. This is also a consequence of the main theorem of \cite{Morris-AutCircPart}. 

\item \label{CCARems-small}
Groups of even order seem far more likely to fail to be strongly CCA than groups of odd order. For example, of the $28$ groups of order less than~$32$ that are not strongly CCA, only one has odd order \csee{SmallGrpSect}. In fact, there are only three groups of odd order less than 100 that are not strongly CCA: the non-abelian group $G_{21}$ of order~$21$, the group $G_{21} \times \ZZ_3$ of order $63$, and the wreath product $\ZZ_3 \wr \ZZ_3$, which has order~$81$ \csee{OddLess100}.

\item \label{CCARems-normal}
If the subgroup consisting of all left-translations is normal in the automorphism group of the Cayley graph $\Cay(G;S)$, then $\Cay(G;S)$ is said to be \emph{normal} \cite{Xu-AutGrpsIsos}. It is not difficult to see that every normal Cayley graph is strongly CCA \ccf{Aff=N(G)}, and that every automorphism of a normal Cayley graph is colour-permuting.

\item \label{CCARems-Directed}
The notion of (strongly) CCA generalizes in a natural way to the setting of Cayley digraphs $\Cayd(G;S)$, by putting the colour~$s$ on each directed edge of the form $x \rightarrow xs$. (There is no need to include $s^{-1}$ in the colour.) However, it is very easy to see that if $\Cayd(G;S)$ is connected, then every colour-preserving automorphism of $\Cayd(G;S)$ is left-translation by some element of~$G$ \cite[Thm.~4-8, p.~25]{White-GrfsGrpsSurfs}, and that every colour-permuting automorphism is affine \cite[Lem.~2.1]{FiolFiolYebra-LineDigraph}.
Therefore, both notions are completely trivial in the directed setting. However, there has been some interest in determining when every automorphism of $\Cayd(G;S)$ is colour-permuting \cite{AlbertEtal-ColPermAuts,Dobson-SomeNonNormal} (in which case, the Cayley digraph is normal, in the sense of~\pref{CCARems-normal}).

\end{enumerate}
\end{rems}

\begin{ack}
We thank an anonymous referee for numerous helpful comments that improved the exposition.

D.\,W.\,M.\ and J.\,M.\ thank the Faculty of Mathematics, Natural Sciences and Information Technologies of the University of Primorska (Slovenia) for its hospitality during the visit that gave rise to this research project.

The work of A.\,H.\ was partially supported by research program P1-0285 from the Slovenian Research Agency. 
The work of K.\,K.\ was partially supported by research program P1-0285 and research projects N1-0011, J1-6743, and J1-6720 from the Slovenian Research Agency.
The work of J.\,M.\ was partially supported by a research grant from the Natural Sciences and Engineering Research Council of Canada.
\end{ack}

\section{Examples of non-CCA groups} \label{EgSect}

\begin{rem} \label{CCAFixId}
Since automorphisms are the only affine functions that fix the identity element~$e$ (and left-translations are colour-preserving automorphisms of any Cayley graph), it is easy to see that if $\Cay(G;S)$ is CCA, then every colour-preserving automorphism that fixes the identity is an automorphism of the group~$G$. More precisely:
	\begin{itemize}
	\item[] A Cayley graph $\Cay(G;S)$ is CCA if and only if, for every colour-preserving automorphism~$\varphi$ of $\Cay(G;S)$, such that $\varphi(e) = e$, we have $\varphi \in \Aut G$.
	\end{itemize}
The same is true with ``strongly CCA'' in the place of ``CCA\rlap,'' if ``colour-preserving'' is replaced with ``colour-permuting\rlap.''
This is reminiscent of the CI (Cayley Isomorphism) property \cite{Li-IsoCaySurvey}, and this similarity motivated our choice of terminology.
\end{rem}

We thank Gabriel Verret for pointing out that the quaternion group~$Q_8$ is not CCA. In fact, two different groups of order~$8$ are not CCA:

\begin{eg}[G.\,Verret] \label{Z4xZ2andQ8}
$\ZZ_4 \times \ZZ_2$ and $Q_8$ are not CCA.
\end{eg}

\begin{proof}
($Q_8$) Let $\Gamma=\Cay(Q_8; \{\pm i, \pm j\})$. This is the complete bipartite graph $K_{4,4}$. (See \cref{fig:notCCA} with the labels that are inside the vertices.) Let $\varphi$ be the graph automorphism that interchanges the vertices $k$ and $-k$ while fixing every other vertex. This is clearly not an automorphism of~$G$ since $i$ and $j$ are fixed by $\varphi$ and generate~$G$, but $\varphi \neq 1$. It is, however, a colour-preserving automorphism of~$\Gamma$. 

\begin{figure}[t]
\centerline{\includegraphics{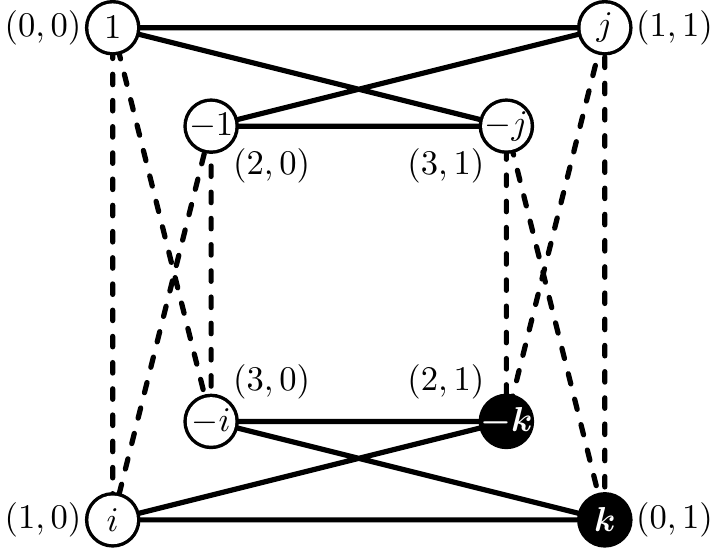}}
\caption{Interchanging the two black vertices while fixing all of the white vertices is a colour-preserving graph automorphism that fixes the identity vertex but is not a group automorphism.}\label{fig:notCCA}
\end{figure}
%
%
%
%
%
%
%
%

($\ZZ_4 \times \ZZ_2$)
Let $\Gamma=\Cay \bigl( \ZZ_4\times\ZZ_2; \{\pm(1,0),\pm(1,1)\} \bigr)$. This is again the complete bipartite graph $K_{4,4}$. (See \cref{fig:notCCA} with the labels that are outside the vertices.) Let $\varphi$ be the graph automorphism that interchanges the vertices $(0,1)$ and $(2,1)$ while fixing all of the other vertices. This is clearly not an automorphism of $G$ since $(1,0)$ and $(1,1)$ are fixed by $\varphi$ and generate $G$, but $\varphi \neq 1$. It is, however, a colour-preserving automorphism of~$\Gamma$.
\end{proof}

Both of the groups in \cref{Z4xZ2andQ8} are generalized dicyclic \ccf{DicyclicDefn}:
	\begin{itemize}
	\item $Q_8$ is the generalized dicyclic group over~$\ZZ_4$,
	and
	\item $\ZZ_4 \times \ZZ_2$ is the generalized dicyclic group over~$\ZZ_2 \times \ZZ_2$.
	\end{itemize}
More generally, we will see in \fullcref{4NotCCA}{dicyclic} below that no generalized dicyclic group is CCA, except~$\ZZ_4$.

We will see in \cref{OddNotCCAHas} that the following example is the smallest group of odd order that is not CCA.

\begin{eg} \label{21NotCCA}
The nonabelian group of order $21$ is not CCA.
\end{eg}

\begin{proof}
Let $G = \langle\, a,x \mid a^3 = e, \ a^{-1} x a = x^2 \,\rangle$. 
(Since $x = e^{-1} x e = a^{-3} x a^3 = x^8$, the relations imply $x^7 = e$, so $G$ has order~$21$.) By letting $b = ax$, we see that $G$ also has the presentation
	$$ G = \langle\, a,b \mid a^3 = e, \ (ab^{-1})^2 = b^{-1} a \,\rangle .$$
As illustrated in \cref{21NotCCAFig}, every element of~$G$ can be written uniquely in the form
	$$ \text{$a^i b^j a^k$, \ where \ $i,j,k \in \{0,\pm1\}$ \ and \ $j = 0 \Rightarrow k = 0$} .$$
Define 
	$$ \varphi(a^i b^j a^k) = \begin{cases}
	\hfil b^j a^{-k} & \text{if $i = 0$} , \\
	\hfil a b^{-j} a^k & \text{if $i = 1$} , \\
	a^{-1} b^{-j} a^{-k} & \text{if $i = -1$}
	. \end{cases} $$
Then $\varphi$ is a colour-preserving automorphism of $\Cay \bigl( G; \{a^{\pm1}, b^{\pm1} \} \bigr)$ \csee{21NotCCAFig}. However, $\varphi$ is not affine, since it fixes~$e$, but is not an automorphism of~$G$ (because $\varphi(ab) = a b^{-1} \neq a b = \varphi(a) \, \varphi(b)$).
\begin{figure}[t]
\centerline{\includegraphics{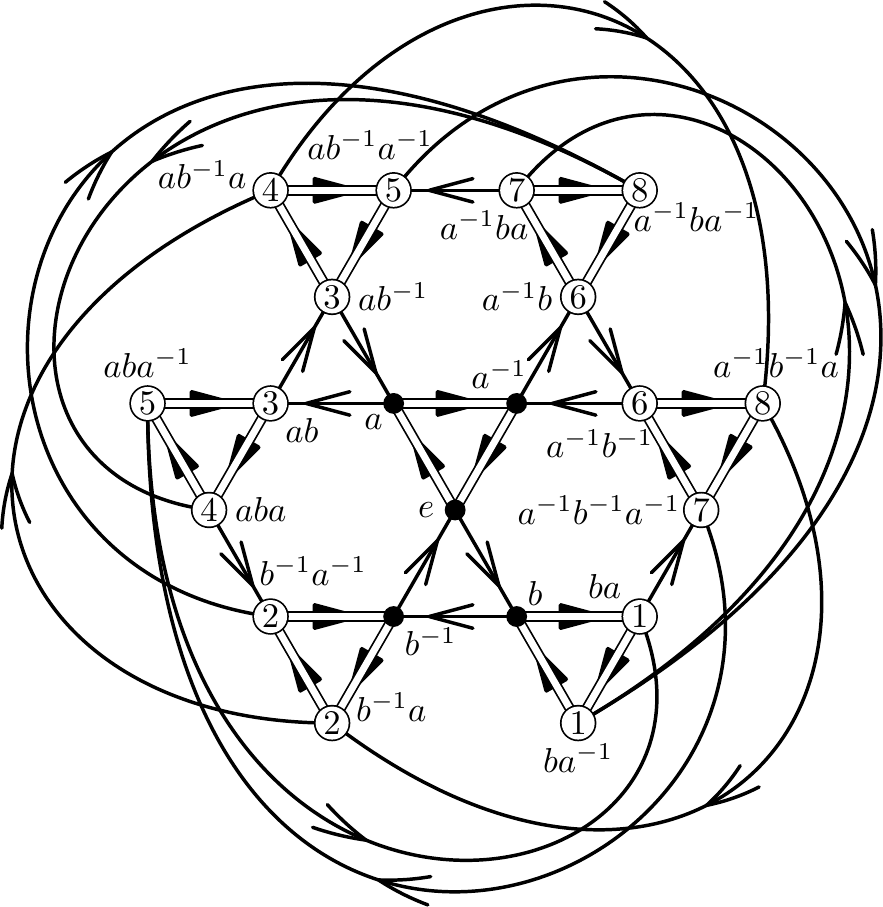}}
\caption{The colour-preserving automorphism~$\varphi$ fixes every black vertex, but interchanges the two vertices labeled~\circlei, for $1 \le i \le 8$. Since the neighbours of both copies of~\circlei\ have the same labels (for example, the vertices labeled~\circleit7 are connected by a black edge to \circleit1 and~\circleit5, and by a white edge to \circleit6 and~\circleit8), we see that $\varphi$ is indeed a colour-preserving automorphism of the graph (if the orientations of the edges are ignored).}
\label{21NotCCAFig}
\end{figure}
\end{proof}

See \cref{SemiNotCCA} for a generalization of the following example.

\begin{eg} \label{WreathProdNotCCA}
The wreath product $\ZZ_m \wr \ZZ_n$ is not CCA whenever $m \ge 3$ and $n \ge 2$. 
\end{eg}

\begin{proof}
This group is a semidirect product
        $$(\ZZ_m \times \ZZ_m \times \cdots \times \ZZ_m) \rtimes \ZZ_n. $$
For the generators $a = \bigl( (1,0,0,\ldots,0), 0 \bigr)$ and $b = \bigl( (0,0,\ldots,0), 1 \bigr)$, the map
        $$\bigl( (x_1,x_2,x_3,\ldots,x_n) , y \bigr)   \mapsto  \bigl( (-x_1, x_2, x_3, \ldots, x_n) , y \bigr) $$
(negate a single factor of the abelian normal subgroup) is a colour-preserving automorphism of $\Cay \bigl( \ZZ_m \wr \ZZ_n; \{a^{\pm1},b^{\pm1}\} \bigr)$ that fixes the identity element but is not a group automorphism.
\end{proof}

The following construction provides many additional examples of non-CCA groups by generalizing the idea of \cref{Z4xZ2andQ8}.

\begin{prop}\label{notCCA-structure}
Suppose there is a generating set~$S$ of~$G$, an element $\tau$ of~$G$, and a subset~$T$ of~$S$, such that:
	\begin{itemize}
	\item $S = S^{-1}$,
	\item $\tau$ is an element of order~$2$,
	\item each element of $S$ is either centralized or inverted by~$\tau$,
	\item $t^2 = \tau$ for all $t \in T$,
	\item the subgroup $\langle (S \setminus T) \cup \{\tau\} \rangle$ is not all of~$G$,
	and
	\item either $\bigl| G : \langle (S \setminus T) \cup \{\tau\} \rangle \bigr| > 2$ or $\tau$ is not in the centre of~$G$.
	\end{itemize}
Then $G$ is not CCA.
\end{prop}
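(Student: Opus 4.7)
The plan is to generalize the colour-preserving automorphisms of \cref{Z4xZ2andQ8}. Set $H = \langle (S \setminus T) \cup \{\tau\} \rangle$, a proper subgroup of~$G$ by hypothesis, and pick any coset $gH$ with $g \notin H$. Define $\varphi \colon G \to G$ by
\[ \varphi(x) = \begin{cases} x\tau & \text{if } x \in gH, \\ x & \text{otherwise.} \end{cases} \]
Since $\tau \in H$, $\varphi$ permutes each coset of~$H$, and since $\tau^2 = e$, $\varphi$ is an involution (hence a bijection). Moreover $\varphi(e) = e$ because $e \in H \ne gH$.

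The first task is to verify that $\varphi$ is a colour-preserving automorphism of $\Cay(G;S)$, i.e., that $\varphi(xs) \in \{\varphi(x) s^{\pm 1}\}$ for every $s \in S$ and every $x \in G$. When $s \in S \setminus T \subseteq H$, the vertices $x$ and $xs$ lie in the same coset of~$H$, and the hypothesis that $\tau$ centralizes or inverts~$s$ handles this case, whether or not that coset is $gH$. When $s \in T \setminus H$, the vertices $x$ and $xs$ lie in \emph{different} cosets, so at most one of them lies in $gH$; in each of these subcases, the identity $s^2 = \tau$ (combined with $\tau$ centralizing or inverting~$s$) is exactly what is needed to match $\varphi(xs)$ to $\varphi(x) s^{\pm 1}$.

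By \cref{CCAFixId}, the proof will be complete once I show that $gH$ can be chosen so that $\varphi \notin \Aut G$. The key computation introduces the ``deviation'' $\delta(x) = x^{-1}\varphi(x) \in \{e, \tau\}$: if $\varphi$ were a homomorphism, then $\varphi(g\,g^{-1}) = \varphi(g)\,\varphi(g^{-1})$ would force $\delta(g^{-1}) = g\tau g^{-1}$, and since $\delta(g^{-1}) \in \{e, \tau\}$ while $\tau \ne e$, this would yield both $g\tau = \tau g$ \emph{and} $g^2 \in H$.

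Now the two halves of the hypothesis come into play. If $\tau \notin Z(G)$, then because $G \setminus H$ generates~$G$ (no group is the union of two proper subgroups), some $g \in G \setminus H$ fails to commute with~$\tau$; with this choice, the previous paragraph contradicts $\varphi$ being a homomorphism. Otherwise $\tau \in Z(G)$ and $[G:H] > 2$; here centrality of~$\tau$ gives $\delta(xy) = \delta(x)\delta(y)$ directly, so if $\varphi$ were a homomorphism then $\delta$ would be a surjective homomorphism $G \to \langle \tau \rangle$ with kernel $G \setminus gH$. But $|G \setminus gH| = ([G:H]-1)\,|H|$ does not divide $|G| = [G:H]\cdot|H|$ when $[G:H] > 2$, contradicting that the kernel is a subgroup. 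The main delicacies are keeping the edge verification uniform across the subcases of step two, and tracking which clause of the hypothesis drives each half of the non-automorphism argument.
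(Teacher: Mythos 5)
Your construction of $\varphi$ (right multiplication by $\tau$ on a single coset of $H=\langle (S\setminus T)\cup\{\tau\}\rangle$) is exactly the paper's, and your colour-preservation check runs along the same lines, via $t^{-1}=\tau t$ for $t\in T$; the one small slip is that your case split ``$s\in S\setminus T$'' versus ``$s\in T\setminus H$'' omits $s\in T\cap H$, but that subcase is harmless because $\tau=s^2$ commutes with such an $s$, so the same-coset argument applies verbatim. Where you genuinely diverge is in proving that $\varphi$ is not an automorphism. The paper pins the moved coset down as $xH$ with $x\in T\setminus H$ and splits on the index: for $|G:H|>2$ it just observes that a nonidentity automorphism cannot fix more than half of $G$, and for $|G:H|=2$ it runs the conjugation computation $\varphi(x^{-1}hx)$ with $h\in H$ not commuting with $\tau$, exploiting $x^{-1}=x\tau\in xH$ and $x\tau=\tau x$. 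You instead keep the coset flexible and split on centrality of~$\tau$: if $\tau\notin Z(G)$ you choose the coset to contain some $g$ with $g\tau\neq\tau g$ (such a $g$ exists outside $H$ because $G\setminus H$ generates $G$, a group never being the union of two proper subgroups), and then $\varphi(gg^{-1})=\varphi(g)\,\varphi(g^{-1})$ forces $g\tau g^{-1}\in\{e,\tau\}$, hence $g\tau=\tau g$, a contradiction; if $\tau\in Z(G)$, the hypothesis gives $|G:H|>2$, and your deviation map $\delta(x)=x^{-1}\varphi(x)$ becomes a surjective homomorphism onto $\langle\tau\rangle$ whose kernel $G\setminus gH$ has order $\bigl(|G:H|-1\bigr)|H|$, which is incompatible with Lagrange. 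I checked both computations and the two cases do cover the disjunction in the hypothesis, so the argument is complete. What your route buys is freedom in the choice of coset and a uniform treatment of the index-$2$ case, at the cost of invoking the union-of-two-subgroups fact and a counting argument; the paper's version is more hands-on but leans on the specific representative $x\in T$. Net: correct, with the non-affineness half argued by a genuinely different, equally valid method.
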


\begin{proof}
For convenience, let $H = \langle (S \setminus T) \cup \{\tau\} \rangle$. Since $\langle S \rangle = G$, but, by assumption, $H \neq G$, there exists some $x \in T \setminus H$. Define 
	$$\varphi(g) = \begin{cases}
	g \tau & \text{if $g \in xH$} , \\
	g & \text{otherwise}
	. \end{cases}$$
It is obvious that $\varphi$ fixes~$e$, since $e \notin xH$. 

We claim that $\varphi$ is is not an automorphism of~$G$. If $| G : H | > 2$, this follows from the fact that a nonidentity automorphism cannot fix more than half of the elements of~$G$. Thus, we may assume $| G : H | = 2$. Then, by assumption, there is some element~$h$ of~$G$ that does not commute with~$\tau$. Since $\tau$ commutes with every element of~$T$ (because $\tau = t^2$), we see that we may assume $h \in H$. If $\varphi$ is an automorphism, then, since it is the identity on the normal subgroup~$H$ of~$G$, but $x^{-1} = x x^{-2} = x \tau \in x H$, we have:
	$$ x^{-1} h x
	= \varphi(x^{-1} h x) 
	= \varphi(x^{-1}) \cdot \varphi(h) \cdot \varphi(x) 
	= x^{-1} \tau \cdot h \cdot x \tau
	\neq x^{-1} h x \tau^2
	= x^{-1} h  x
	. $$
This is a contradiction.

Since each element of $S$ is either centralized or inverted by~$\tau$, we know that right-multiplication by~$\tau$ is a colour-preserving automorphism of $\Cay(G;S)$. Restricting to $xH$, this tells us that $\varphi$ preserves colours (and existence) of all edges of $\Cay(G;S)$ that have both endvertices in $xH$. 

Now consider an edge from $g$ to~$h$, where $g \in xH$ and $h \not\in xH$. There is some element $t \in T$ such that $gt=h$, and
there is an edge of the same colour from $\varphi(g) = g \tau$ to $g \tau t^{-1}$. 
Since $t^2 = \tau$ and $\tau^2 = e$, we have $t^{-1}=\tau t$. 
Hence, the edge is from $\varphi(g)$ to 
	$$g\tau t^{-1}=gt^2 t^{-1}=gt=h=\varphi(h) .$$
Thus $\varphi$ preserves the existence and colour of every edge from a vertex in $xH$ to a vertex outside of $xH$.
Since the only vertices moved by~$\varphi$ are in $xH$, this shows that $\varphi$ is a colour-preserving automorphism of $\Cay(G;S)$.
\end{proof}

\begin{defn} \label{DicyclicDefn}
Let $A$ be an abelian group of even order. Choose an involution $y$ of $A$. The corresponding \emph{generalized dicyclic group} is
	$$\mathrm{Dic}(A,y) = \langle\, x, A \mid  x^2=y, \ x^{-1} a x =a^{-1}, \ \forall a \in A\,\rangle .$$
\end{defn}

\begin{defn}
For $n \ge 1$, let
$$\mathrm{SemiD}_{16n}=\langle\, a, x \mid a^{8n}=x^2=e, \ xa=a^{4n-1}x\,\rangle.$$
This is a \emph{semidihedral} (or \emph{quasidihedral}) group. The term is usually used only when $n$ is a power of~$2$, but the construction is valid more generally.
\end{defn}

We have already seen in \cref{Z4xZ2andQ8} that $\ZZ_4 \times \ZZ_2$ and $Q_8$ are not CCA.
Here are a few additional examples that come from \cref{notCCA-structure}.

\begin{cor} \label{4NotCCA}
The following groups are not CCA:
	\begin{enumerate}

	\item \label{4NotCCA-BigAbel}
	$\ZZ_{2^k} \times \ZZ_2 \times \ZZ_2$, for any $k \ge 2$,

	\item \label{4NotCCA-dicyclic}
	every generalized dicyclic group except\/~$\ZZ_4$,
	and
	
	\item \label{4NotCCA-semidihedral}
	every semidihedral group.
	
	\end{enumerate}
\end{cor}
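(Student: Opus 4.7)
The plan is to apply \cref{notCCA-structure} directly in each of the three cases. In every case, I will choose a \emph{central} involution~$\tau$, populate $T$ with elements whose square is~$\tau$, and pick the few extra generators for $S \setminus T$ so that $H := \langle (S \setminus T) \cup \{\tau\} \rangle$ has index greater than~$2$ in~$G$. Centrality of $\tau$ makes the hypothesis ``every $s \in S$ is centralised or inverted by~$\tau$'' automatic, so all that remains is to verify the squaring identities, that $S$ generates~$G$, and the index inequality.

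For~(1), I will take $\tau = (2^{k-1}, 0, 0)$, $T = \{\pm(2^{k-2}, 1, 0),\, \pm(2^{k-2}, 0, 1)\}$, and $S = T \cup \{\pm(1,0,0)\}$. Each element of $T$ squares to~$\tau$ because $2\cdot 2^{k-2} = 2^{k-1}$; the set $S$ generates~$G$ because subtracting $2^{k-2}(1,0,0)$ from the elements of $T$ recovers $(0,1,0)$ and $(0,0,1)$; and $H = \langle (1,0,0)\rangle$ has index~$4$ in~$G$.

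For~(2), with $G = \mathrm{Dic}(A, y)$ and $|A| \ge 4$, I will take $\tau = y$ and $S = T = xA$. Using the relations $x^{-1}ax = a^{-1}$ and $x^2 = y$, one computes $(xa)^2 = y$ for every $a \in A$, so every element of~$S$ squares to~$\tau$; the set $S$ is closed under inversion because $(xa)^{-1} = xay \in xA$; and $S$ generates~$G$ since $x \in S$ and $a = x^{-1}(xa)$ for every $a \in A$. Here $S \setminus T = \emptyset$, so $H = \langle y\rangle$ has index $|G|/2 \ge 4$ in~$G$.

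For~(3), with $G = \mathrm{SemiD}_{16n}$, I will take $\tau = y := a^{4n}$. The defining relation $xa = a^{4n-1}x$ gives $(xa)^2 = a^{4n-1}\cdot x^2\cdot a = a^{4n} = y$, and a similar short computation shows $xy = yx$, so $\tau$ is a central involution. Setting $T = \{(xa)^{\pm 1}\}$ and $S = T\cup\{x\}$, the identity $x\cdot xa = a$ shows $S$ generates~$G$, while $H = \langle x, y\rangle$ has order~$4$, so $|G:H| = 4n \ge 4$. In each of the three cases the only real computation is the squaring identity for~$\tau$; the main obstacle is keeping $H$ small, which is why $a$ must be excluded from $S\setminus T$ in~(3), since including it would give $\langle a, y\rangle = \langle a\rangle$ of index only~$2$.
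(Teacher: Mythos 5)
Your proposal is correct and follows essentially the same route as the paper: each case is handled by applying \cref{notCCA-structure} with the same choices of $\tau$, $T$, and $S$ (up to trivially closing the sets under inverses and writing $xa$ in place of $ax$ in the semidihedral case), together with the same index computations. The extra verifications you include (the squaring identities, generation, and $|G:H|>2$) are exactly the checks the paper leaves implicit or notes parenthetically.
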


\begin{proof}
\pref{4NotCCA-BigAbel}
Apply \cref{notCCA-structure} with $\tau = (2^{k-1},0,0)$, $T = \{(2^{k-2},1,0),(2^{k-2},0,1) \}$, and $S = \{(1,0,0)\} \cup T$.

\pref{4NotCCA-dicyclic} For $G=\mathrm{Dic}(A,y) = \langle x, y, A \rangle$, apply \cref{notCCA-structure} with $\tau = y$ and $S = T = xA$.
(We have 
$\bigl| G : \langle (S \setminus T) \cup \{\tau\} \rangle \bigr| 
=  | G : \langle\tau \rangle | = |G|/ 2 > 2$, since $G \not\iso \ZZ_4$.)

\pref{4NotCCA-semidihedral}
For $G=\mathrm{SemiD}_{16n}=\langle a,x \rangle$, apply \cref{notCCA-structure} with $\tau = a^{4n}$,
$T = \{(ax)^{\pm 1}\}$, and $S = \{x\} \cup T$. 
(Note that $\bigl| G : \langle (S \setminus T) \cup \{\tau\} \rangle \bigr| 
=  | G : \langle x, \tau \rangle | = |G|/ 4 \ge 4$.)
\end{proof}

\section{Direct products and semidirect products} \label{DirProdSect}

\begin{prop} \label{NotCCAxAny}
If $G_1$ is not strongly CCA, and $G_2$ is any group, then $G_1 \times G_2$ is not strongly CCA.
Furthermore, the same is true with ``CCA'' in the place of ``strongly CCA\rlap.''
\end{prop}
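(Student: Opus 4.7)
The plan is to lift a bad colour-preserving (respectively, colour-permuting) automorphism from $G_1$ to the direct product $G_1 \times G_2$ in the most obvious way: act trivially on the second factor. The key observation from \cref{CCAFixId} is that, since $G_1$ is not (strongly) CCA, we may fix a connected Cayley graph $\Cay(G_1;S_1)$ together with a colour-preserving (respectively, colour-permuting) automorphism $\varphi_1$ that fixes $e_{G_1}$ but is not a group automorphism of $G_1$. Let $\pi_1$ be the permutation of $S_1$ associated with $\varphi_1$ in the colour-permuting case; in the colour-preserving case, take $\pi_1$ to be the identity.

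Next I would choose any inverse-closed generating set $S_2$ of $G_2$ (for instance $S_2 = G_2 \setminus \{e\}$), and form
\[
 S = \bigl(S_1 \times \{e_{G_2}\}\bigr) \cup \bigl(\{e_{G_1}\} \times S_2\bigr) ,
\]
which is inverse-closed and generates $G_1 \times G_2$, so $\Cay(G_1 \times G_2; S)$ is connected. Define
\[
 \varphi(g_1,g_2) = \bigl(\varphi_1(g_1),\, g_2\bigr) ,
\]
and extend $\pi_1$ to a permutation $\pi$ of $S$ by $\pi(s_1,e_{G_2}) = (\pi_1(s_1),e_{G_2})$ and $\pi(e_{G_1},s_2) = (e_{G_1},s_2)$.

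The bulk of the work is the routine verification that $\varphi$ is colour-permuting (and is colour-preserving when $\varphi_1$ is): edges of the form $(g_1,g_2) \edge (g_1 s_1,g_2)$ are handled by the colour-permuting property of $\varphi_1$, while edges of the form $(g_1,g_2)\edge (g_1,g_2 s_2)$ are preserved exactly because $\varphi$ is the identity on the second coordinate. Clearly $\varphi$ fixes the identity $(e_{G_1},e_{G_2})$.

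The only real content is to check that $\varphi$ is not affine. By \cref{CCAFixId}, since $\varphi$ fixes the identity it suffices to check that $\varphi$ is not a group automorphism of $G_1 \times G_2$. But if it were, then comparing
\[
 \varphi\bigl((g_1,e_{G_2})(h_1,e_{G_2})\bigr) = \bigl(\varphi_1(g_1 h_1),\,e_{G_2}\bigr)
\]
with $\varphi(g_1,e_{G_2})\,\varphi(h_1,e_{G_2}) = (\varphi_1(g_1)\varphi_1(h_1),\,e_{G_2})$ would force $\varphi_1$ to be a homomorphism, and hence a group automorphism of $G_1$, contradicting the choice of $\varphi_1$. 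This concludes the argument; there is no serious obstacle, as the construction is essentially forced once one recognizes that colour-preservation and colour-permutation both pass through the coordinate projections.
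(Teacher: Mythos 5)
Your proof is correct and takes essentially the same route as the paper: lift $\varphi_1$ to $G_1 \times G_2$ by letting it act trivially on the second coordinate of the Cartesian-product Cayley graph $\Cay\bigl(G_1 \times G_2;\, (S_1 \times \{e\}) \cup (\{e\} \times S_2)\bigr)$. The only (cosmetic) difference is at the end: you normalize $\varphi_1$ to fix the identity and use \cref{CCAFixId} to reduce ``not affine'' to ``not a group automorphism'', checked by restricting to $G_1 \times \{e\}$, whereas the paper keeps a general non-affine $\varphi_1$ and rules out an arbitrary affine form $\alpha(g_1x_1,g_2x_2)$ by the same restriction-to-the-first-factor idea.
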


\begin{proof}
Since $G_1$ is not strongly CCA, some connected Cayley graph $\Cay(G_1;S_1)$ on~$G_1$ has a colour-permuting automorphism~$\varphi_1$ that is not affine. Let $\pi$ be a permutation of~$S_1$, such that $\varphi_1(g_1 s) \in \{ \varphi_1(g_1) \, \pi(s)^{\pm1}\}$ for all $g_1 \in G_1$ and $s \in S_1$. (If $G_1$ is not CCA, then we may assume $\pi$ is the identity permutation.) 
Now, fix any connected Cayley graph $\Cay(G_2; S_2)$ on~$G_2$, and let 
	$$S = \bigl( S_1 \times \{e\} \bigr) \cup \bigl( \{e\} \times S_2 \bigr) ,$$
so $\Cay (G_1 \times G_2 ; S )$ is connected.  (It is isomorphic to the Cartesian product $\Cay(G_1; S_1) \mathbin{\Box} \Cay(G_2; S_2)$.)

Define a permutation $\varphi$ of $G_1 \times G_2$ by 
	$\varphi (x_1,x_2) = \bigl( \varphi_1(x_1),x_2 \bigr)$.
For all $(x_1,x_2) \in G_1 \times G_2$ and $s_i \in S_i$, we have 
	\begin{itemize}
	\item $\varphi \bigl( (x_1,x_2) \cdot (s_1,e) \bigr) = \bigl( \varphi_1( x_1 s_1) , x_2 \bigr) \in \bigl\{ \varphi \bigl( x_1, x_2 \bigr) \cdot \bigl( \pi(s_1) , e \bigr)^{\pm1} \bigr\}$,
	and
	\item $\varphi \bigl( (x_1,x_2) \cdot (e,s_2) \bigr) = \bigl( \varphi_1( x_1) , x_2 s_2 \bigr) = \varphi (x_1, x_2) \cdot ( e, s_2 )$.
	\end{itemize}
Therefore, $\varphi$ is a colour-permuting automorphism of $\Cay (G_1 \times G_2 ; S )$ (and it is colour-preserving if $\pi$ is the identity permutation of~$S_1$).

To complete the proof that $G_1 \times G_2$ is not strongly CCA (and is not CCA if $\pi$ is the identity permutation of~$S_1$), it suffices to show that $\varphi$ is not affine. We prove this by contradiction: suppose there exists an automorphism $\alpha$ of $G_1 \times G_2$ and $(g_1,g_2) \in G_1 \times G_2$, such that $\varphi(x_1,x_2) = \alpha(g_1 x_1, g_2 x_2)$, for all $(x_1,x_2) \in G_1 \times G_2$. By the definition of~$\varphi$, this implies $\varphi_1(x_1) = \alpha(g_1 x_1)$ for all $x_1 \in G_1$. Since $\varphi_1(x_1) \in G_1$ and $g_1 x_1$ is an arbitrary element of~$G_1$, we conclude that $\alpha(G_1) \subseteq G_1$, so the restriction of~$\alpha$ to~$G_1$ is an automorphism of~$G_1$. Hence, the equation $\varphi_1(x_1) = \alpha(g_1 x_1)$ implies that $\varphi_1$ is affine. This contradicts the choice of~$\varphi_1$.
\end{proof}

\Cref{NotCCAxAny} tells us that if $G_1 \times G_2$ is CCA, then $G_1$ and~$G_2$ must both be CCA. The converse is not true. (For example, $\ZZ_4$ and $\ZZ_2$ are both CCA, but \cref{Z4xZ2andQ8} tells us that the direct product $\ZZ_4 \times \ZZ_2$ is not CCA.) However, the converse is indeed true when the groups are of relatively prime order:

\begin{prop} \label{GxHPrime}
Assume $\gcd \bigl( |G_1|, |G_2| \bigr) = 1$. Then $G_1 \times G_2$ is CCA\/ \textup(or strongly CCA\/\textup) if and only if $G_1$ and~$G_2$ are both CCA\/ \textup(or strongly CCA, respectively\/\textup).
\end{prop}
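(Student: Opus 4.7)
The ``only if'' direction follows immediately from \cref{NotCCAxAny}. Assume conversely that both $G_1$ and $G_2$ are CCA (resp.\ strongly CCA) and have coprime orders. I must show that every colour-preserving (resp.\ colour-permuting) automorphism $\varphi$ of a connected Cayley graph $\Cay(G_1 \times G_2;S)$ satisfying $\varphi(e)=e$ is a group automorphism.

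The plan is to replace $S$ by an adapted generating set under which the Cayley graph becomes a Cartesian product of Cayley graphs on the two factors. For each $s=(s_1,s_2)\in S$, coprimality of $|G_1|$ and $|G_2|$ forces $|s_1|$ and $|s_2|$ to be coprime, so $s^{|s_2|}=(s_1^{|s_2|},e)$ lies in $G_1\times\{e\}$ and generates $\langle s_1\rangle\times\{e\}$. Setting
\[
T_1=\{s^{\pm|s_2|}:s\in S\}\setminus\{e\},\qquad T_2=\{s^{\pm|s_1|}:s\in S\}\setminus\{e\},
\]
it is routine to verify that $T_1$ generates $G_1\times\{e\}$, $T_2$ generates $\{e\}\times G_2$, and consequently $\Cay(G_1\times G_2;T_1\cup T_2)$ is canonically the Cartesian product $\Cay(G_1;\pi_1(T_1))\boxprod\Cay(G_2;\pi_2(T_2))$, where $\pi_1,\pi_2$ denote the coordinate projections.

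The main step, and the one I expect to require the most care, is to show that $\varphi$ is itself a colour-preserving (resp.\ colour-permuting) automorphism of this enlarged Cayley graph. The key observation is a rigidity statement for monochromatic cycles: for each $s\in S$ and each $g$, $\varphi$ maps the cycle $g\langle s\rangle$ of colour $\{s,s^{-1}\}$ bijectively onto $\varphi(g)\langle\pi(s)\rangle$ of colour $\{\pi(s),\pi(s)^{-1}\}$ (with $\pi$ the identity in the colour-preserving case and the permutation of $S$ induced by $\varphi$ in the colour-permuting case), and there are exactly two cycle isomorphisms sending $g$ to $\varphi(g)$. Hence there is an $\varepsilon\in\{\pm1\}$ (depending on the cycle) with $\varphi(gs^i)=\varphi(g)\,\pi(s)^{\varepsilon i}$ for all $i$. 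Applied with $i=|s_2|$ this yields $\varphi(g\cdot s^{|s_2|})=\varphi(g)\cdot\pi(s)^{\pm|s_2|}$. Because $|\pi(s)|=|s|$ and the coprime factorisation $|s|=|s_1|\cdot|s_2|$ with $|s_i|$ dividing $|G_i|$ is unique, we have $|\pi(s)_2|=|s_2|$, so $\pi(s)^{|s_2|}\in T_1$. An analogous computation handles $T_2$, showing that $\varphi$ takes $T_i$-coloured edges to $T_i$-coloured edges.

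Once $\varphi$ is known to respect the enlarged colour structure, the conclusion follows quickly. The connected components of the $T_1$-coloured subgraph are the horizontal slices $G_1\times\{g_2\}$, and those of the $T_2$-subgraph are the vertical slices $\{g_1\}\times G_2$; since $\varphi$ preserves both colour partitions, it preserves both slice decompositions, forcing $\varphi(g_1,g_2)=(\tau(g_1),\sigma(g_2))$ for identity-fixing bijections $\tau$ of $G_1$ and $\sigma$ of $G_2$. Restricting $\varphi$ to $G_1\times\{e\}$ and $\{e\}\times G_2$ realises $\tau$ and $\sigma$ as colour-preserving (resp.\ colour-permuting) automorphisms of the factor Cayley graphs fixing the identity, so the CCA (resp.\ strongly CCA) hypothesis on each factor gives $\tau\in\Aut(G_1)$ and $\sigma\in\Aut(G_2)$, whence $\varphi\in\Aut(G_1)\times\Aut(G_2)\subseteq\Aut(G_1\times G_2)$.
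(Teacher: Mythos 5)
Your proof is correct and follows essentially the same route as the paper's: both exploit coprimality to raise each generator $s$ to a power lying in $G_1\times\{e\}$, use the rigidity $\varphi(gs^i)=\varphi(g)\,\pi(s)^{\pm i}$ along monochromatic cycles to see that $\varphi$ respects the slice decomposition and hence splits as $\varphi_1\times\varphi_2$, and then apply the (strongly) CCA hypothesis to a connected Cayley graph on each factor. The only difference is cosmetic: the paper uses a single exponent $k$ with $k\equiv 0 \pmod{|G_2|}$ and $k\equiv 1\pmod{|G_1|}$, so that $s^k=\pi_1(s)$ and one can work directly with $\Cay\bigl(G_1;\pi_1(S)\bigr)$, whereas you use the exponent $|s_2|$ separately for each generator and package the argument through the auxiliary Cartesian-product graph $\Cay(G_1\times G_2;T_1\cup T_2)$.
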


\begin{proof}
($\Rightarrow$) \Cref{NotCCAxAny}.

\medbreak

($\Leftarrow$) Let 
	\begin{itemize}
	\item $G = G_1 \times G_2$,
	\item $S$ be a generating set of $G$,
	\item $\varphi$~be a colour-permuting automorphism of $\Cay(G; S)$ that fixes the identity element \csee{CCAFixId}, 
	\item $\pi_i \colon G_1 \times G_2 \to G_i$ be the natural projection,
	and
	\item $k$ be a multiple of $|G_2|$ that is $\equiv 1 \pmod{|G_1|}$, so $g^k = \pi_1(g)$ for all $g \in G$.
	\end{itemize}

Consider some $s \in S$, and let $t = \varphi(s)$, so $\varphi(xs^i) = \varphi(x) \, t^{\pm i}$ for all $x \in G$ and $i \in \ZZ$.
Then, for all $g \in G$, we have 
	\begin{align} \tag{$*$} \label{IsCCA}
	\varphi \bigl( g \, \pi_1(s) \bigr) 
	&= \varphi( g s^k )
	= \varphi(g) \, t^{\pm k}
	= \varphi(g) \cdot \pi_1(t)^{\pm1}
	.\end{align}
Since $\pi_1(S)$ generates~$G_1$, this implies there is a well-defined permutation~$\varphi_2$ of~$G_2$, such that
	$$ \text{$\varphi(G_1 \times \{g_2\}) = G_1 \times \{\varphi_2(g_2)\}$ for all $g_2 \in G_2$} . $$ 
By repeating the argument with the roles of $G_1$ and~$G_2$ interchanged, we conclude that there is a permutation $\varphi_1$ of~$G_1$, such that
	$$ \text{$\varphi(g_1,g_2) = \bigl( \varphi_1(g_1), \varphi_2(g_2) \bigr)$ for all $(g_1,g_2) \in G_1 \times G_2$} . $$

Now, \eqref{IsCCA} implies that $\varphi_1$ is a colour-permuting automorphism of $\Cay \bigl( G_1; \pi_1(S) \bigr)$. Similarly,  $\varphi_2$ is a colour-permuting automorphism of $\Cay \bigl( G_2; \pi_2(S) \bigr)$.
Since each $G_i$ is CCA, we conclude that $\varphi_i$ is an automorphism of~$G_i$. So $\varphi$ is an automorphism of $G_1 \times G_2$.
\end{proof}

The idea used in \cref{WreathProdNotCCA} yields the following result that generalizes the CCA part of \cref{NotCCAxAny}. 

\begin{prop} \label{SemiNotCCA}
Suppose $G = H \rtimes K$ is a semidirect product, and $\Cay(H;S_0)$ is a connected Cayley graph of~$H$, such that:
	\begin{itemize}
	\item $S_0$ is invariant under conjugation by every element of~$K$,
	and
	\item there is a colour-preserving automorphism $\varphi_0$ of $\Cay(H;S_0)$, such that either 
		\begin{itemize}
		\item $\varphi_0$ is not affine, 
		or 
		\item $\varphi_0(e) = e$, and there exist $s \in S_0$ and $k \in K$, such that $\varphi_0(k^{-1} s k) \neq k^{-1} \, \varphi_0(s) \, k$.
		\end{itemize}
	\end{itemize}
Then $G$ is not CCA. 
\end{prop}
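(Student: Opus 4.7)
The plan is to extend the idea of \cref{WreathProdNotCCA}: use the semidirect structure of $G = H \rtimes K$ to lift $\varphi_0$ to a colour-preserving automorphism of a suitable Cayley graph of~$G$, and show it is not affine. First I would choose a symmetric generating set $T \subseteq K \setminus \{e\}$ of~$K$ (identifying $K$ with its canonical image in~$G$) and put $S = S_0 \cup T$, so $\Cay(G;S)$ is connected. Writing every $g \in G$ uniquely as $hk$ with $h \in H$ and $k \in K$, I would define $\varphi \colon G \to G$ by
$$ \varphi(hk) = \varphi_0(h)\, k . $$

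Next I would verify that $\varphi$ is a colour-preserving automorphism of $\Cay(G;S)$ by checking the two types of edges separately. For a $T$-edge $hk \edge hkt$, the factorization $hkt = h \cdot (kt)$ is already in $H \cdot K$ normal form, so $\varphi(hkt) = \varphi_0(h)\, kt = \varphi(hk) \cdot t$. For an $S_0$-edge $hk \edge hks$, the normal form is $hks = h s' \cdot k$, where $s' := ksk^{-1}$; the $K$-invariance of $S_0$ guarantees $s' \in S_0$, so $\varphi(hks) = \varphi_0(hs')\, k$, and the colour-preserving property of $\varphi_0$ applied to the edge $h \edge hs'$ of $\Cay(H;S_0)$ gives $\varphi_0(hs') \in \{\varphi_0(h)\, s'^{\pm 1}\}$. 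Since $s'^{\pm 1} k = k s^{\pm 1}$ by definition of $s'$, multiplying by $k$ on the right rewrites this as $\varphi(hks) \in \{\varphi(hk) \cdot s^{\pm 1}\}$, as required. This is the step where the $K$-invariance of $S_0$ is essential.

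Finally I would show $\varphi$ is not affine by treating the two alternatives in the hypothesis. In the case $\varphi_0(e) = e$, we have $\varphi(e) = e$, so by \cref{CCAFixId} it suffices to show $\varphi \notin \Aut G$; the identity $k = e \cdot k$ gives $\varphi(k) = k$ and $\varphi(k^{-1}) = k^{-1}$, while $k^{-1}sk \in H$ gives $\varphi(k^{-1}sk) = \varphi_0(k^{-1}sk)$, so if $\varphi$ were a homomorphism we would get $\varphi_0(k^{-1}sk) = k^{-1}\varphi_0(s)k$, contradicting the hypothesis. If instead $\varphi_0$ is not affine, I would set $\varphi' := L_{\varphi_0(e)^{-1}} \circ \varphi$, which is still a colour-preserving automorphism of $\Cay(G;S)$ but now fixes~$e$; if $\varphi$ were affine, then so would $\varphi'$ be, hence $\varphi' \in \Aut G$ by \cref{CCAFixId}. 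Since $\varphi'$ stabilizes $H$ (setwise), its restriction $\varphi'|_H$ lies in $\Aut H$, from which $\varphi_0 = L_{\varphi_0(e)} \circ \varphi'|_H$ would be affine, again contradicting the hypothesis. The main obstacle in this plan is the colour-preservation check for $S_0$-edges, where one must carefully track the conjugation twist $s \mapsto ksk^{-1}$ introduced by the semidirect product; the rest of the argument is bookkeeping.
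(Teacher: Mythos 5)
Your proposal is correct and follows essentially the same route as the paper: the same lift $\varphi(hk)=\varphi_0(h)\,k$, the same colour-preservation check via the twist $s\mapsto ksk^{-1}$ (using $K$-invariance of $S_0$), and the same contradiction with the hypothesis to rule out affineness (the paper simply takes $S=S_0\cup K$ rather than $S_0\cup T$, and folds your two cases into one sentence, but the content is identical).
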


\begin{proof}
Define $\varphi \colon G \to G$ by 
	$\varphi( h k ) =  \varphi_0(h) \, k $.
We claim that $\varphi$ is a colour-preserving automorphism of $\Cay(G; S_0 \cup K)$ that is not affine (so $G$ is not CCA, as desired).

For $h \in H$ and $k,k_1 \in K$, we have 
	$$ \varphi (hk \, k_1 )
	= \varphi_0( h) \, k k_1 
	= \varphi(hk) \, k_1 ,$$
so $\varphi$ preserves the colour of $K$-edges.
Now consider some $s \in S_0$ and let ${}^k\!s = k s k^{-1} \in S_0$. Then, since $\varphi_0$ is colour preserving, we have
	$$ \varphi (hk \, s )
	= \varphi( h \, {}^k\!s \, k)
	= \varphi_0( h \, {}^k\!s ) \, k 
	= \bigl( \varphi_0(h) \, ({}^k\!s)^{\pm1} \bigr) \, k
	= \varphi_0(h)  \, k s^{\pm1}
	= \varphi(hk)  \,s^{\pm1}
	, $$
so $\varphi$ also preserves the colour of $S_0$-edges.
Hence, $\varphi$ is colour-preserving.

Now, suppose $\varphi$ is affine. Then the restriction $\varphi_0$ of~$\varphi$ to~$H$ is also affine, so, by assumption, we must have $\varphi(e) = e$, so $\varphi$ is an automorphism of~$G$. Hence, for all $s \in S_0$ and $k \in K$, we have 
	$$ \varphi_0(k^{-1} s k) = \varphi(k^{-1} s k) = \varphi(k)^{-1} \, \varphi(s) \, \varphi(k) = k^{-1} \, \varphi(s) \, k = k^{-1} \, \varphi_0(s) \, k .$$
This contradicts the hypotheses of the \lcnamecref{SemiNotCCA}.
\end{proof}

\begin{rem}
\Cref{SemiNotCCA} can be generalized slightly: assume $G = HK$ and $H \normal G$ (but do not assume $H \cap K = \{e\}$, which would make $G$ a semidirect product). Then the above proof applies if we make the additional assumption that $\varphi_0(hk) = \varphi_0(h) \, k$ for all $h \in H$ and $k \in H \cap K$.
\end{rem}

\section{Abelian groups} \label{AbelSect}

The following result shows that all non-CCA abelian groups can be constructed from examples that we have already seen in \cref{Z4xZ2andQ8,4NotCCA} (and that CCA and strongly CCA are equivalent for abelian groups).

\begin{prop} \label{abelian}
For an abelian group~$G$, the following are equivalent:
	\begin{enumerate} 
	\item \label{abelian-factor}
	$G$ has a direct factor that is isomorphic to either $\ZZ_4 \times \ZZ_2$ or a group of the form $\ZZ_{2^k} \times \ZZ_2 \times \ZZ_2$, with $k \ge 3$.
	\item \label{abelian-notCCA}
	$G$ is not CCA.
	\item \label{abelian-notstrong}
	$G$ is not strongly CCA.
	\end{enumerate} 
\end{prop}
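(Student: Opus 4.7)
The proof establishes the cyclic chain of implications $(1) \Rightarrow (2) \Rightarrow (3) \Rightarrow (1)$.

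For $(1) \Rightarrow (2)$, I would combine the existing examples with the product result: \cref{Z4xZ2andQ8} gives that $\ZZ_4 \times \ZZ_2$ is not CCA, and \fullcref{4NotCCA}{BigAbel} gives that $\ZZ_{2^k} \times \ZZ_2 \times \ZZ_2$ is not CCA for every $k \ge 2$ (in particular for $k \ge 3$), and \cref{NotCCAxAny} then lifts either failure up through any direct product. The implication $(2) \Rightarrow (3)$ is immediate because any colour-preserving automorphism is a colour-permuting automorphism whose associated permutation~$\pi$ of~$S$ is the identity, so a non-CCA witness is automatically a non-strongly-CCA witness.

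The essential direction is $(3) \Rightarrow (1)$. Assume $G$ is abelian and not strongly CCA, and decompose $G = G_2 \times G_o$ where $G_2$ is the Sylow $2$-subgroup and $G_o$ has odd order. Since $\gcd(|G_2|, |G_o|) = 1$, \cref{GxHPrime} forces at least one of these two factors to be not strongly CCA. I would rule out $G_o$ by proving that every abelian group of odd order is strongly CCA: for a colour-permuting automorphism~$\varphi$ of $\Cay(G_o; S)$ fixing~$e$, with associated permutation~$\pi$ of~$S$, the fact that every nontrivial element has $s \ne s^{-1}$ combined with commutativity should pin down the sign ambiguities and force $\varphi$ to extend to the group automorphism induced by~$\pi$. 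This leaves $G_2$ as the non-strongly-CCA factor, and it remains to produce the required direct factor inside~$G_2$.

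Write $G_2 = \ZZ_{2^{a_1}} \times \cdots \times \ZZ_{2^{a_r}}$ with $a_1 \ge \cdots \ge a_r \ge 1$. I would prove the contrapositive: if $G_2$ has neither $\ZZ_4 \times \ZZ_2$ nor $\ZZ_{2^k} \times \ZZ_2 \times \ZZ_2$ (for $k \ge 3$) as a direct factor, then $G_2$ is strongly CCA. A direct invariant-factor computation shows that the absence of these factors restricts $G_2$ to a short list of structural types: cyclic, elementary abelian, all invariants of order $\ge 4$, or the borderline case of a single $\ZZ_2$ factor paired with invariants of order $\ge 8$. For each type one must verify that every colour-permuting automorphism of every connected Cayley graph is affine. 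The main obstacle is this positive strong-CCA verification; the key leverage will be that the ``negate one coordinate'' construction used in the proof of \cref{Z4xZ2andQ8} (and, more generally, in \cref{notCCA-structure}) cannot be carried out in these $G_2$, because the required involution~$\tau$ and subset~$T$ of generators with $t^2 = \tau$ cannot be assembled without producing one of the forbidden direct factors.
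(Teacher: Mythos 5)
Your implications \pref{abelian-factor}$\Rightarrow$\pref{abelian-notCCA}$\Rightarrow$\pref{abelian-notstrong} are fine and match the paper, and your first reduction for \pref{abelian-notstrong}$\Rightarrow$\pref{abelian-factor} (split off Sylow subgroups via \cref{GxHPrime}) is also the paper's first move. But the heart of the hard direction is missing. Two steps in your sketch are asserted rather than proved. First, the claim that every abelian group of odd order is strongly CCA is not established by saying that $s\neq s^{-1}$ and commutativity ``should pin down the sign ambiguities'': the sign in $\varphi(gs)=\varphi(g)\,\pi(s)^{\pm1}$ may a priori depend on the vertex $g$, and showing that these local choices are globally consistent is precisely the nontrivial content (it could alternatively be deduced from \cref{OddNotCCAHas} together with \cref{OddCCA->strong}, but that is not what you wrote, and the paper's proof does not need those later results). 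Second, and more seriously, for the $2$-group $G_2$ your ``key leverage'' is that the construction of \cref{notCCA-structure} cannot be assembled without creating a forbidden direct factor. Even if true, that only shows one particular recipe for non-affine colour-preserving automorphisms fails; strong CCA requires showing that \emph{every} colour-permuting automorphism of \emph{every} connected Cayley graph on $G_2$ is affine, and no argument for this positive verification is given for any of the structural types on your list (e.g.\ $\ZZ_8\times\ZZ_2$, $\ZZ_4\times\ZZ_4$, or homocyclic groups). You acknowledge this is ``the main obstacle,'' which is exactly the point: the obstacle is the proof.

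For comparison, the paper does not case-split by isomorphism type at all. After reducing to a $p$-group (any $p$), it chooses $s_1,\dots,s_m\in S$ of successively maximal orders so that each $G_i=\langle s_1,\dots,s_i\rangle$ is a direct summand of~$G$, sets $t_i=\varphi(s_i)$, and proves by induction that $\varphi|_{G_i}$ agrees with the natural isomorphism $\alpha(g+rs_i)=\varphi(g)+rt_i$. If it does not, uniqueness of the representation $g+rs_i$ forces $p^{k_i}=2$, and a short computation then shows $|t_i|=|t_j|=4$ for some $j<i$, which manufactures a direct summand $\ZZ_4\times\ZZ_2$ or $\ZZ_{2^\ell}\times\ZZ_2\times\ZZ_2$, contradicting the hypothesis. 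That uniform inductive argument, which simultaneously handles the odd primes and the borderline $2$-group cases you isolated, is the content your proposal would still need to supply.
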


\begin{proof}
($\ref{abelian-factor} \Rightarrow \ref{abelian-notCCA}$) This is immediate from \cref{Z4xZ2andQ8,4NotCCA,NotCCAxAny}.

($\ref{abelian-notCCA} \Rightarrow \ref{abelian-notstrong}$) Obvious.

($\ref{abelian-notstrong} \Rightarrow \ref{abelian-factor}$) 
We prove the contrapositive. Assume $G$ does not have any direct summands of the form specified in \pref{abelian-factor}. Given a connected Cayley graph $\Cay(G;S)$ on~$G$,  and a colour-permuting automorphism~$\varphi$ of $\Cay(G;S)$, such that $\varphi(0) = 0$, we will show that $\varphi$ is an automorphism of~$G$.

From \cref{GxHPrime} (and the fact that every abelian group is the direct sum of its Sylow subgroups), we may assume $G$ is a $p$-group for some prime~$p$.
Then 
	$$G \iso \ZZ_{p^{k_1}} \times \ZZ_{p^{k_2}} \times \cdots \times \ZZ_{p^{k_m}} ,
	\text{\quad with $k_1 \ge k_2 \ge \cdots \ge k_m \ge 1$} .$$
Since $S$ is a generating set, it is easy to see that there is some $s_1 \in S$, such that $|s_1| = p^{k_1}$. Also, it is a basic fact about finite abelian groups that every cyclic subgroup of maximal order is a direct summand \cite[Lem.~1.3.3, p.~10]{Gorenstein-FiniteGrps}. Therefore, by induction on~$i$, we see that there exist $s_1,\ldots,s_m \in S$, such that if we let $G_i = \langle s_1,\ldots, s_i \rangle$, then 
	$$ \text{$G_i \iso  G_{i-1} \times \ZZ_{p^{k_i}}$
	\quad and \quad
	$G \iso G_i \times \ZZ_{p^{k_{i+1}}} \times \cdots \times \ZZ_{p^{k_m}}$,
	\quad for each~$i$}
	. $$
It is important to note that each element of~$G_i$ can be written uniquely in the form 
	\begin{align} \tag{$\dagger$} \label{UniqRep}
	\text{$g + rs_i$, with $g \in G_{i-1}$ and $-p^{k_i}/2 < r \le p^{k_i}/2$
	(and $r \in \ZZ$)}
	. \end{align}
For convenience, also let
	$$ \text{$t_i = \varphi(s_i)$ \quad and \quad $H_i = \langle t_1,\ldots,t_i \rangle$} .$$

We will show, by induction on~$i$, that $H_i$ is a direct factor of~$G$, and the restriction of $\varphi$ to $G_i$ is an isomorphism onto~$H_i$. (Note that this implies $G/G_i \iso G/H_i$, by the uniqueness of the decomposition of~$G$ as a direct sum of cyclic groups.) Taking $i = m$ yields the desired conclusion that $\varphi$ is an automorphism of~$G$.

The base case $i = 0$ is trivial.
For the induction step, write $G = G_{i-1} \times \overline{G}$, so  
	$$\overline{G} \iso G/G_{i-1} \iso \ZZ_{p^{k_i}} \times \ZZ_{p^{k_{i+1}}} \times \cdots \times \ZZ_{p^{k_m}} ,$$
and let $\overline{\phantom{x}} \colon G \to \overline{G}$ be the natural projection.
Then $\langle \overline{s_i} \rangle = \overline{G_i} \iso \ZZ_{p^{k_i}}$ is a direct summand of~$\overline{G}$. 
Since $\varphi$ is colour-permuting (and $H_{i-1}= \varphi(G_{i-1})$ is a subgroup), it is easy to see that the order of $t_i$ in $G/H_{i-1}$ is equal to~$p^{k_i}$ (the same as the the order  of~$s_i$ in $G/G_{i-1}$), and that $\varphi ( p^{k_i} s_i ) = p^{k_i} t_i$. This implies that if we define
	$$ \text{$\alpha \colon G_i \to H_i$ \ by \ $\alpha(g + rs_i) = \varphi(g) + r t_i$
	\quad for $g \in G_{i-1}$ and $r \in \ZZ$} ,$$
then $\alpha$ is a well-defined isomorphism. So we need only show that the restriction of~$\varphi$ to~$G_i$ is equal to~$\alpha$.

Suppose $\varphi|_{G_i} \neq \alpha$. (This will lead to a contradiction.) 
Since $\varphi$ is colour-permuting and, by definition, $\alpha$ agrees with~$\varphi$ on $G_{i-1}$, this implies there is some $g \in G_{i-1}$, such that $\varphi(g + s_i) \neq \alpha(g+ s_i)$. However, since $\varphi$~is colour-permuting, we know
	$$\varphi( g + s_i ) = \varphi(g) \pm \varphi(s_i) = \alpha(g) \pm t_i.$$
Since $\alpha(g+ s_i) = \alpha(g) + t_i $, the preceding two sentences imply 
	$$\varphi( g + s_i ) = \alpha(g) - t_i \in H_{i-1} - t_i .$$
Furthermore, since $\varphi$ is colour-permuting (and $\varphi(s_j) = t_j$), we know that it maps edges of colour $\{s_1^{\pm1}\}, \ldots,\{s_{i-1}^{\pm1}\}$ to edges of colour $\{t_1^{\pm1}\}, \ldots,\{t_{i-1}^{\pm1}\}$, so
	$$ \text{$\varphi( x + h ) \in \varphi(x) + H_{i-1}$ for all $x \in G$ and $h \in H_{i-1}$} .$$ Taking $x = s_i$ and $h = g$ yields
	$$ \varphi( g + s_i ) \in H_{i-1} + \varphi(s_i) = H_{i-1} + t_i .$$
This contradicts the uniqueness of~$r$ in the analogue of \eqref{UniqRep} for~$H_i$, unless $1 = p^{k_i}/2$. Hence, we must have $p^{k_i} = 2$ (so $\ZZ_2$ is a direct summand of~$G$), which means $p = 2$ and $k_i = 1$.

We have
	\begin{align*}
	 \varphi(g) + 2t_i
	&= \alpha(g + 2s_i) && \text{(definition of~$\alpha$)}
	\\&= \varphi(g + 2s_i) && \text{($g + 2s_i  = g + p^{k_i} s_i \in G_{i-1}$)}
	\\&= \varphi(g) - 2t_i && \text{($\varphi( g + s_i ) = \alpha(g) - t_i = \varphi(g) - t_i$)}
	, \end{align*}
so $4 t_i = 0$. Also note that, since 
	$$\varphi(g) + t_i = \alpha(g + s_i) \neq \varphi(g + s_i) = \varphi(g) - t_i ,$$
we must have $2t_i \neq 0$. So $|t_i| = 4$.

Since $\langle s_1,\ldots,s_{i-1} \rangle = G_{i-1}$, there must exist $g' \in G_{i-1}$, and $j < i$, such that 
	$$ \text{$\varphi(g' + s_i) = \alpha(g') + t_i$, 
	\  but \ 
	$\varphi(g' + s_j + s_i) = \alpha(g' + s_j) - t_i = \alpha(g') + t_j - t_i$}
	. $$
Since $\varphi$ is colour-permuting, we also have
	$$ \varphi(g' + s_j + s_i) = \varphi(g' + s_i) \pm t_j = \alpha(g') + t_i \pm t_j .$$
Hence, $t_j - t_i = t_i \pm t_j$, so $t_j \mp t_j = 2 t_i$. Since $2 t_i \neq 0$, we conclude that $2 t_j = 2t_i$; hence, $|t_j| = 4$.

Since $2^{k_j} = |H_j : H_{j-1}|$ is a divisor of~$|t_j|$, and $|t_j| = 4$, there are two possibilities for $k_j$:
	\begin{itemize}
	\item If $k_j = 2$, then $\ZZ_4 \times \ZZ_2 \iso \ZZ_{2^{k_j}} \times \ZZ_{2^{k_i}}$ is a direct summand of~$G$.
	
	\item If $k_j = 1$, then, since $|t_j| = 4$, there must be some $\ell < j$, such that $k_\ell \ge 2$. This implies that $\ZZ_{2^{k_\ell}} \times \ZZ_2 \times \ZZ_2 \iso \ZZ_{2^{k_\ell}} \times \ZZ_{2^{k_j}} \times \ZZ_{2^{k_i}}$ is a direct summand of~$G$.
	\end{itemize}
Each of these possibilities contradicts our assumption that there are no direct summands as specified in~\pref{abelian-factor} of the statement of the \lcnamecref{abelian}.
\end{proof}

\begin{cor} \label{OrderOfNotCCAAbel}
For $n \in \ZZ^+$, there is a non-CCA abelian group of order~$n$ if and only if $n$~is divisible by~$8$.
\end{cor}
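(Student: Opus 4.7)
The plan is straightforward: this is essentially a book-keeping consequence of \cref{abelian}, which characterizes non-CCA abelian groups by the presence of a specific direct factor.

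For the forward direction ($\Rightarrow$), suppose $G$ is a non-CCA abelian group of order~$n$. By \fullcref{abelian}{factor}, $G$ has a direct factor isomorphic either to $\ZZ_4 \times \ZZ_2$ or to $\ZZ_{2^k} \times \ZZ_2 \times \ZZ_2$ for some $k \ge 3$. The first possibility contributes a factor of~$8$ to~$|G|$, and the second contributes a factor of $2^{k+2} \ge 32$, which is also divisible by~$8$. Either way, $8 \mid n$.

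For the reverse direction ($\Leftarrow$), assume $8 \mid n$ and write $n = 8m$. Take $G = (\ZZ_4 \times \ZZ_2) \times \ZZ_m$. This is an abelian group of order~$n$ with $\ZZ_4 \times \ZZ_2$ as a direct factor, so \fullcref{abelian}{factor} (equivalently, \cref{Z4xZ2andQ8} combined with \cref{NotCCAxAny}) gives that $G$ is not CCA.

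There is essentially no obstacle here; both implications are a one-line appeal to \cref{abelian}, with the reverse direction requiring only the explicit construction $\ZZ_4 \times \ZZ_2 \times \ZZ_m$.
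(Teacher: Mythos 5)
Your proof is correct and follows exactly the intended route: the paper states \cref{OrderOfNotCCAAbel} without proof as an immediate consequence of \cref{abelian}, and your two directions (the listed direct factors all force $8 \mid n$; conversely $\ZZ_4 \times \ZZ_2 \times \ZZ_{n/8}$ is a non-CCA abelian group of order~$n$ by \cref{Z4xZ2andQ8} and \cref{NotCCAxAny}) are precisely the book-keeping the authors had in mind.
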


\section{Generalized dihedral groups} \label{DihedralSect}

\begin{defn}
The \emph{generalized dihedral group} over an abelian group~$A$ is the group
	$$ \langle\, \sigma, A \mid \sigma^2 = e, \ \sigma a \sigma = a^{-1}, \ \forall a \in A \,\rangle .$$
\end{defn}

\begin{lem} \label{DihCCALem}
Suppose $D$ is the generalized dihedral group over an abelian group~$A$, and $\varphi$ is a colour-permuting automorphism of a connected Cayley graph\/ $\Cay(D;S)$, such that $\varphi(e) = e$. If $A$~is strongly CCA, and $\varphi(S \cap A) = S \cap A$, then $\varphi$~is an automorphism of~$D$.
\end{lem}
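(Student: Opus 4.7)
I would begin by recording a few preliminaries and reducing the lemma to the statement $\psi := \varphi|_A \in \Aut A$. Every element of $S \setminus A \subseteq \sigma A$ is an involution, and the colour permutation $\pi$ induced by $\varphi$ satisfies $\pi(S \setminus A) = S \setminus A$ (since $\pi(S \cap A) = S \cap A$ by hypothesis). A parity argument---an element lies in~$A$ iff every walk from~$e$ to it uses evenly many $(S \setminus A)$-coloured edges, a property preserved by $\varphi$---gives $\varphi(A) = A$. Connectedness of $\Cay(D;S)$ forces $S \setminus A \neq \emptyset$ unless $D = A$ (and the case $D = A$ is immediate from the strong-CCA hypothesis on $A$). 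Fixing $t_0 \in S \setminus A$, we get $\varphi(at_0) = \psi(a)\pi(t_0)$ for all $a \in A$ (no sign ambiguity, since $\pi(t_0)$ is an involution), and once $\psi \in \Aut A$ is known, the assignments $a \mapsto \psi(a)$, $t_0 \mapsto \pi(t_0)$ respect the defining relations $t_0^2 = e$ and $t_0 a t_0 = a^{-1}$ of~$D$; they therefore extend to an endomorphism that agrees with $\varphi$ on $A \cup \{t_0\}$, hence on all of~$D$.

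The heart of the plan is to exhibit a connected Cayley graph on~$A$ on which $\psi$ is colour-permuting, and then invoke strong CCA of~$A$. I would take
	$$S^* = (S \cap A) \cup \{s_1 s_2 : s_1, s_2 \in S \setminus A, \ s_1 \neq s_2\} ,$$
which is symmetric, since $(s_1 s_2)^{-1} = s_2 s_1$. It generates~$A$: using $ts = s^{-1}t$ for $s \in A$ and $t \in S \setminus A$, one collects all $(S \setminus A)$-letters at the end of any $S$-expression for an element of~$A$, leaving an even-length word whose consecutive pairs lie in $S^* \cup \{e\}$. For $s \in S \cap A$, the colour-permuting condition on $\Cay(D;S)$ already gives $\psi(as) = \psi(a)\pi(s)^{\pm 1}$ with $\pi(s) \in S \cap A$. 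For $s_1 s_2 \in S^*$ with $s_i \in S \setminus A$, two edge-steps yield
	$$\psi(as_1 s_2) = \varphi(as_1)\pi(s_2) = \psi(a)\pi(s_1)\pi(s_2) ,$$
and $\pi(s_1)\pi(s_2) \in S^*$ because $\pi$ is injective on $S \setminus A$. Hence $\psi$ is a colour-permuting automorphism of $\Cay(A; S^*)$ fixing~$e$, and strong CCA of~$A$ gives $\psi \in \Aut A$, as required.

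The subtle point I expect to be the main obstacle is well-definedness of the induced colour permutation on~$S^*$: if an element of~$A$ admits two factorizations $s_1 s_2 = s_3 s_4$ with $s_i \in S \setminus A$, then $\pi(s_1)\pi(s_2)$ and $\pi(s_3)\pi(s_4)$ must coincide. This is free because, applied with $a = e$, both expressions equal $\psi(s_1 s_2) = \psi(s_3 s_4)$, and $\psi$ is single-valued. A more naive choice of generating set for~$A$, such as $(S \cap A) \cup \{t_0 t : t \in S \setminus A\}$ for a fixed $t_0$, does not work: the product $\pi(t_0)\pi(t)$ need not be of the form $t_0 t'$ with $t' \in S$, so the colour-permutation target escapes the generating set. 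Allowing all cross-products of distinct elements of $S \setminus A$ is precisely what makes the target stay inside~$S^*$.
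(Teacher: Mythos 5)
Your proposal is correct and follows essentially the same route as the paper: both arguments reduce to the strong-CCA hypothesis on~$A$ by producing a connected Cayley graph on~$A$ whose connection set consists of $S \cap A$ together with products of elements of $S \setminus A$ (the paper adjoins the whole subgroup $\Sigma = \langle S \setminus A \rangle$ to~$S$ and restricts to $\Cay(A; S\cap A)$, while you use only the length-two products $s_1 s_2$, handling the same well-definedness point by evaluating at $e$), and then extend the resulting automorphism of~$A$ to all of~$D$. The only cosmetic difference is the last step, where the paper checks $\varphi(gh) = \varphi(g)\,\varphi(h)$ directly instead of invoking the presentation $D = A \rtimes \langle t_0 \rangle$ as you do.
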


\begin{proof}
Label the elements of $S$ as $S=\{a_1,a_2, \ldots, a_k, \sigma_1, \sigma_2, \ldots, \sigma_t\}$, where $a_i \in A$ for $1 \le i \le k$, and $\sigma_i \not\in A$ for $1 \le i \le t$ (so each $\sigma_i$ is an involution whose action by conjugation inverts every element of~$A$). 
By assumption, $\{a_1,a_2, \ldots, a_k\}$ and $\{\sigma_1, \sigma_2, \ldots, \sigma_t\}$ are invariant under~$\varphi$. Thus, for each~$i$, we have
	\begin{itemize}
	\item $\varphi(a_i) = a'_i$ for some $a'_i \in \{a_1,a_2, \ldots, a_k\}$,
	and
	\item $\varphi(\sigma_i) = \sigma'_i$ for some $\sigma'_i \in \{\sigma_1, \sigma_2, \ldots, \sigma_t\}$.
	\end{itemize}

Notice that since $\sigma_1, \ldots, \sigma_t$ are involutions, each $\sigma_i$ is its own inverse. Therefore, 
whenever $\sigma$ is a word in $\sigma_1, \ldots, \sigma_t$ and $g \in D$, the fact that $\varphi$ is a colour-permuting automorphism means that 
$\varphi(g\sigma)=\varphi(g)\sigma'$, where $\sigma'$ is formed from $\sigma$ by replacing each instance of $\sigma_i$ in $\sigma$ by $\sigma'_i$. Therefore, if we let $\Sigma$ be the subgroup generated by $\{\sigma_1, \ldots, \sigma_t\}$, then $\varphi$ is a colour-preserving automorphism of the Cayley graph $\Cay(D; S \cup \Sigma)$. Hence, there is no harm in assuming that $S = S \cup \Sigma$, so $\Sigma \subseteq S$.

Since $\langle S \cap A \rangle$ is normal in~$D$ (in fact, every subgroup of~$A$ is normal, because every element of~$D$ either centralizes or inverts it), we have $D = \langle S \cap A \rangle \Sigma$. Therefore $A = \langle S \cap A \rangle (\Sigma \cap A) = \langle S \cap A \rangle$, so $\Cay( A ; S \cap A )$ is connected. Since $\varphi$ is colour-preserving, and $\varphi(S \cap A) = S \cap A$, this implies that $\varphi(A) = A$. So $\varphi$ is a colour-permuting automorphism of the connected Cayley graph $\Cay( A ; S \cap A )$. Since, by assumption, $A$ is strongly CCA, this implies that $\varphi|_A$ is an automorphism of~$A$. So $\varphi(a b^\epsilon) = \varphi(a) \, \varphi(b)^\epsilon$ for all $a,b \in A$ and $\epsilon \in \ZZ$.

Now we are ready to show that $\varphi$ is an automorphism of $D$. 
Let $g,h \in D$. Then we may write $g = a\sigma$ and $h = b \widetilde\sigma$, where $a,b \in A$ and $\sigma,\widetilde\sigma \in \{e,\sigma_1\}$. For convenience, let $\epsilon \in \{\pm1\}$, such that $\sigma c \sigma = c^\epsilon$ for all $c \in A$. Note that, since $\sigma_1' \in \{\sigma_1,\ldots,\sigma_t\}$, we know that $\sigma_1$ and~$\sigma_1'$ both invert~$A$, so we also have $\sigma' c \sigma' = c^\epsilon$. Then
	$$ \varphi(gh)
	= \varphi( a \sigma \cdot b \widetilde\sigma) 
	= \varphi(ab^\epsilon \cdot \sigma \widetilde\sigma)
	= \varphi(a) \, \varphi(b)^\epsilon \cdot \sigma' \widetilde\sigma'
	=  \varphi(a) \sigma' \cdot \varphi(b) \widetilde\sigma'
	= \varphi(g) \cdot \varphi(h) .$$
Since $g, h \in D$ are arbitrary, this proves that $\varphi$ is an automorphism of $D$.
\end{proof}

\begin{prop} \label{GenDihedralCCA}
The generalized dihedral group~$D$ over an abelian group~$A$ is CCA if and only if $A$~is CCA.
\end{prop}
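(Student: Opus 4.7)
The plan is to prove both directions separately, using the tools already developed in the paper.

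For ($\Leftarrow$), I would assume $A$ is CCA. Since $A$ is abelian, \cref{abelian} gives that $A$ is in fact strongly CCA, which is exactly what is needed to invoke \cref{DihCCALem}. Given a connected Cayley graph $\Cay(D;S)$ and a colour-preserving automorphism $\varphi$ with $\varphi(e)=e$, I would first show that $\varphi(S\cap A)=S\cap A$. Colour preservation applied at the identity gives $\varphi(s)\in\{s,s^{-1}\}$ for every $s\in S$, and since $A$ is a subgroup, $s\in A$ if and only if $s^{-1}\in A$. Hence $\varphi$ sends $S\cap A$ into itself, and bijectivity of $\varphi$ on the finite set $S$ promotes this to an equality. \Cref{DihCCALem} then immediately yields that $\varphi$ is an automorphism of $D$, so $\Cay(D;S)$ is CCA.

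For ($\Rightarrow$), I would prove the contrapositive: if $A$ is not CCA, produce a connected Cayley graph on $D$ witnessing that $D$ is not CCA. The natural tool is \cref{SemiNotCCA}, applied to the semidirect product $D=A\rtimes\langle\sigma\rangle$. Starting from a connected Cayley graph $\Cay(A;S_0)$ admitting a non-affine colour-preserving automorphism $\varphi_0$, the key observation is that conjugation by $\sigma$ inverts every element of $A$, so the defining property $S_0=S_0^{-1}$ is exactly the statement that $S_0$ is invariant under conjugation by every element of $\langle\sigma\rangle$. All hypotheses of \cref{SemiNotCCA} are then met (the first of its two alternatives holds, since $\varphi_0$ is not affine), and that proposition produces a connected Cayley graph on $D$ with a non-affine colour-preserving automorphism.

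I do not expect any real obstacle: both directions reduce quickly to results proved earlier. The only conceptual point worth highlighting is that the abelian-only equivalence of CCA and strongly CCA, supplied by \cref{abelian}, is precisely what bridges the CCA assumption on $A$ to the strongly CCA hypothesis demanded by \cref{DihCCALem}; without this bridge one could not close the $(\Leftarrow)$ direction using the lemma as stated.
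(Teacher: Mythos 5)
Your proof is correct. The ($\Leftarrow$) direction is exactly the paper's argument: use \cref{abelian} to upgrade CCA to strongly CCA for the abelian group~$A$, observe that a colour-preserving automorphism fixing~$e$ satisfies $\varphi(s)\in\{s^{\pm1}\}$ so that $\varphi(S\cap A)=S\cap A$ (since $A$ is closed under inverses), and then apply \cref{DihCCALem}. The only divergence is in the ($\Rightarrow$) direction: the paper writes $D=A\rtimes\langle\sigma\rangle$, notes that $\Cay\bigl(D;S\cup\{\sigma\}\bigr)$ is isomorphic to the Cartesian product $\Cay(A;S)\mathbin{\Box}P_2$, and extends the non-affine $\varphi_0$ by re-running the \emph{proof} of \cref{NotCCAxAny}, whereas you invoke \cref{SemiNotCCA} directly, with $H=A$, $K=\langle\sigma\rangle$, and the observation that $S_0=S_0^{-1}$ is precisely $\sigma$-invariance of~$S_0$ under conjugation. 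Since \cref{SemiNotCCA} was designed as a generalization of that very construction (the extension $\varphi(hk)=\varphi_0(h)k$ is the same map in both arguments), the mathematical content is the same; your citation is arguably the cleaner one, as it uses a stated result off the shelf rather than appealing to the internals of an earlier proof, at the mild cost of working with the generating set $S_0\cup K$ rather than $S\cup\{\sigma\}$.
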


\begin{proof}
($\Leftarrow$) Note that if $\varphi$ is any colour-preserving automorphism of a connected Cayley graph $\Cay(D;S)$ such that $\varphi(e) = e$, then $\varphi(S \cap A) = S \cap A$, since $A$~is closed under inverses. Furthermore, $A$ is strongly CCA, since it is assumed to be CCA and every CCA abelian group is strongly CCA \csee{abelian}. Therefore, \cref{DihCCALem} implies that $\varphi$ is a group automorphism. So $D$ is CCA.

($\Rightarrow$) Write $D = A \rtimes \langle \sigma \rangle$. Since $A$ is not CCA, there is a colour-preserving automorphism $\varphi_0$ of some connected Cayley graph $\Cay(A;S)$, such that $\varphi_0$ is not affine. Since $\sigma$ inverts every element of~$S$, it is easy to see that $\Cay \bigl( D; S\cup \{\sigma\} \bigr)$ is isomorphic to the Cartesian product $\Cay(A; S) \mathbin{\Box} P_2$. So the proof of \cref{NotCCAxAny} provides a colour-preserving automorphism~$\varphi$ of $\Cay \bigl( D; S\cup \{\sigma\} \bigr)$ whose restriction to~$A$ is~$\varphi_0$, which is not an affine map. Therefore, $\varphi$ is not affine.
\end{proof}

The following result is the special case where $A$~is cyclic (since \cref{abelian} implies that every cyclic group is CCA).

\begin{cor} \label{DihedralCCA}
Every dihedral group is CCA.
\end{cor}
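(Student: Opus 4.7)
The plan is to observe that \cref{DihedralCCA} is essentially an immediate specialization of \cref{GenDihedralCCA}, as the remark just above the corollary already hints. A (standard) dihedral group is, by definition, the generalized dihedral group over a cyclic group, so it suffices to verify that every cyclic group is CCA and then invoke \cref{GenDihedralCCA}.

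First I would check that every cyclic group is CCA using \cref{abelian}. A cyclic group is abelian, so the criterion in \fullcref{abelian}{factor} applies: it fails to be CCA precisely when it has $\ZZ_4 \times \ZZ_2$ or some $\ZZ_{2^k} \times \ZZ_2 \times \ZZ_2$ (with $k \ge 3$) as a direct factor. Both of these groups are non-cyclic (each contains a copy of $\ZZ_2 \times \ZZ_2$), hence they cannot appear as a direct summand of a cyclic group. Therefore every cyclic group is CCA.

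Next, given any dihedral group $D_n$, write it as the generalized dihedral group over the cyclic group $\ZZ_n$. By the previous paragraph, $\ZZ_n$ is CCA, so \cref{GenDihedralCCA} yields that $D_n$ itself is CCA, which is the desired conclusion.

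There is no real obstacle here: both ingredients are already proved earlier in the paper, and the argument amounts to observing that the non-CCA abelian groups identified in \cref{abelian} have rank at least two, so none of them is cyclic. If one wished to avoid citing \cref{abelian} explicitly, one could instead cite the main theorem of \cite{Morris-AutCircPart} noted in \cref{CCARems}, but the route through \cref{GenDihedralCCA,abelian} is the cleanest within the logical flow of this paper.
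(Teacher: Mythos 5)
Your proposal is correct and matches the paper's argument exactly: the paper also derives the corollary as the special case of \cref{GenDihedralCCA} where $A$ is cyclic, noting that \cref{abelian} implies every cyclic group is CCA. Nothing further is needed.
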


\begin{lem} \label{LikeDihedralNotCCA}
If $T$ is a generating set of a group~$H$, and $\sigma$ is a nontrivial automorphism of~$H$, such that $\sigma(t) \in \{t^{\pm1}\}$ for every $t \in T$, then the group $G = (H \rtimes \langle \sigma \rangle) \times \ZZ_2$ is not strongly CCA.
\end{lem}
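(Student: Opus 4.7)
The plan is to exhibit, for a suitable connected Cayley graph on~$G$, a colour-permuting automorphism that is not affine.

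Write $G = (H \rtimes \langle \sigma \rangle) \times \langle z \rangle$, where $z$ denotes the generator of the $\ZZ_2$ factor, so every element of~$G$ has a unique expression $(g, z^j)$ with $g \in H \rtimes \langle \sigma \rangle$ and $j \in \{0,1\}$. I would take the generating set $S = T \cup \{\sigma, z\}$, which generates~$G$ because $T$ generates~$H$ and $\sigma, z$ fill in the two remaining $\ZZ_2$ directions.

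Define $\varphi \colon G \to G$ piecewise on the four cosets of the normal subgroup $H \times \{e\}$: let $\varphi$ be the identity on $H \times \{e\}$ and on $H\sigma \times \{z\}$, and on the remaining two cosets let $\varphi$ swap via $(h, z) \leftrightarrow (h\sigma, e)$ for each $h \in H$. Since $h \mapsto h\sigma$ is a bijection from~$H$ onto~$H\sigma$, the map~$\varphi$ is a well-defined involution, and it fixes the identity element of~$G$.

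The central step is to verify that $\varphi$ is colour-permuting, with the permutation $\pi$ of~$S$ that fixes each $t \in T$ and transposes~$\sigma$ with~$z$. This splits into three verifications --- one per generator type --- and within each the four cosets must be handled in turn. The hypothesis on~$\sigma$ enters only for the $T$-edges whose endpoints lie in one of the two cosets on which $\varphi$ performs the swap: the endpoint-difference of the image edge works out to $\sigma\, t\, \sigma = \sigma(t) \in \{t^{\pm 1}\}$, hence lies in the same colour class as~$t$. For the $\sigma$- and $z$-edges one computes case by case that each $\sigma$-edge is mapped to an edge whose endpoint-difference is~$z$, and each $z$-edge to one whose endpoint-difference is~$\sigma$. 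These calculations are routine but constitute the bulk of the work; I expect keeping the four cosets straight to be the main obstacle.

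Finally, because $\varphi$ fixes the identity, if it were affine it would have to be a group automorphism. Since $\sigma$ is nontrivial and $T$ generates~$H$, there exists $a \in T$ with $\sigma(a) \neq a$, and then $\varphi \bigl( (\sigma, e) \cdot (a, e) \bigr) = \varphi \bigl( (\sigma(a) \sigma, e) \bigr) = (\sigma(a), z)$, whereas $\varphi \bigl( (\sigma, e) \bigr) \cdot \varphi \bigl( (a, e) \bigr) = (e, z) \cdot (a, e) = (a, z)$, and these differ. Hence $\varphi$ is a non-affine colour-permuting automorphism of the connected Cayley graph $\Cay(G; S)$, which proves that~$G$ is not strongly CCA.
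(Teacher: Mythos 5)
Your construction is correct, and the permutation you define is in fact the same one that appears in the paper's proof: writing elements of $G$ as $(h\sigma^x, z^y)$ with $h \in H$ and $x,y \in \{0,1\}$, both maps send $(h\sigma^x, z^y)$ to $(h\sigma^y, z^x)$. What differs is the route to the colour-permuting property and the choice of connection set. The paper transports the automorphism of $G' = H \times \ZZ_2 \times \ZZ_2$ that interchanges the two $\ZZ_2$ factors through the bijection $\varphi(h,\sigma^x,z^y) = (h,x,y)$, asserting that $\varphi$ is a colour-respecting isomorphism of Cayley graphs whose connection sets contain \emph{all} of $H$; you instead work with the smaller connection set $T \cup \{\sigma, z\}$ and verify colour-permutation directly on the four cosets of $H \times \{e\}$. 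Your choice buys something real for the lemma in its stated generality: with all of $H$ in the connection set, $\varphi$ sends an $h$-coloured edge lying inside a coset $(H\sigma, z^y)$ to an edge of colour $\sigma(h)$, so colour-respecting needs $\sigma(h) \in \{h^{\pm 1}\}$ for \emph{every} $h \in H$, which does not follow from the hypothesis on the generating set $T$ alone (take $H = \ZZ_3 \times \ZZ_3$, $T$ the standard generators and their inverses, and $\sigma$ negation of the first coordinate); it does hold in the paper's intended application \textup(\cref{DihIff}\textup), where $\sigma$ is inversion of an abelian group. Restricting to $T$-edges as you do, the relevant endpoint-difference is $\sigma^{x-y}(t) \in \{t^{\pm1}\}$, your computations for the $\sigma$- and $z$-edges are as you state, and your non-affineness argument (using some $a \in T$ with $\sigma(a) = a^{-1} \neq a$, which exists because $\sigma$ is nontrivial and $T$ generates~$H$) is correct. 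Two small points to make explicit: replace $T$ by $T \cup T^{-1}$ so that the connection set is inverse-closed (the hypothesis on $\sigma$ passes to inverses), and note that $(\sigma,e)$, $(e,z)$, and the elements $(t,e)$ lie in distinct colour classes, so the permutation $\pi$ you describe is well defined.
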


\begin{proof}
Let $G' = H \times \ZZ_2 \times \ZZ_2$ and define $\varphi \colon G \to G'$ by
	$ \varphi(h,\sigma^x,y) = (h,x,y)$ for $h \in H$ and $x,y \in \ZZ_2$.
Since $\sigma(t) \in \{t^{\pm1}\}$ for every~$t$, it is easy to verify that $\varphi$ is a colour-respecting isomorphism 
	\begin{align*}
	\text{from } &\Cay \bigl( G ; (H,e,0) \cup \{(e,\sigma,0), (e,0,1)\} \bigr) \\
	\text{to } & \Cay \bigl( G ; (H,0,0) \cup \{(e,1,0), (e,0,1)\} \bigr)
	. \end{align*}
Permuting the two $\ZZ_2$ factors of~$G'$ provides an automorphism of~$G'$ that preserves the generating set, and therefore corresponds to a colour-permuting automorphism of the two Cayley graphs. However, it is not an automorphism of~$G$, since it takes the central element $(e,e,1)$ to $(e,\sigma,0)$, which is not central (since the automorphism~$\sigma$~is nontrivial).
\end{proof}

\begin{prop} \label{DihIff}
The generalized dihedral group over an abelian group~$A$ is strongly CCA if and only if either $A$~does not have $\ZZ_2$ as a direct factor, or $A$ is an elementary abelian $2$-group\/ \textup(in which case, the generalized dihedral group is also an elementary abelian $2$-group\/\textup).
\end{prop}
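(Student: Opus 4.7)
The plan is to prove the biconditional in two directions. For $(\Rightarrow)$ I would argue the contrapositive: suppose $A = A' \times \ZZ_2$ is not an elementary abelian $2$-group. Since $\ZZ_2$ has no element of order greater than~$2$, but $A$ does, $A'$ must contain such an element, so inversion is a nontrivial automorphism~$\sigma$ of~$A'$ with $\sigma(a) = a^{-1} \in \{a^{\pm 1}\}$. Because $\sigma$ fixes the $\ZZ_2$ factor pointwise, $D \iso (A' \rtimes \langle \sigma \rangle) \times \ZZ_2$, and \cref{LikeDihedralNotCCA} (applied with $H = A'$ and any symmetric generating set) shows that $D$ is not strongly CCA.

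For $(\Leftarrow)$, the case when $A$ is an elementary abelian $2$-group is immediate: inversion is the identity on~$A$, so $D \iso A \times \ZZ_2$ is itself elementary abelian of exponent~$2$, hence strongly CCA by~\cref{abelian} (no forbidden direct factor can occur since all elements have order at most~$2$). In the main case, $A$ has no $\ZZ_2$ direct factor, so $A$ itself is strongly CCA by~\cref{abelian} (both forbidden direct factors contain a $\ZZ_2$ summand). Given a colour-permuting automorphism~$\varphi$ of $\Cay(D;S)$ with $\varphi(e) = e$ and corresponding permutation~$\pi$ of~$S$, \cref{DihCCALem} reduces the task to showing $\pi(S \cap A) = S \cap A$.

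The key device is the composition $\hat\pi = \psi \circ \varphi \colon D \to D/A \iso \ZZ_2$, where $\psi$ is the quotient map. Although $\varphi$ is generally not a group homomorphism, the identity $\psi(x^{-1}) = \psi(x)$ makes the sign ambiguities of colour-permuting actions invisible in~$D/A$: a walk $s_1 \cdots s_n = g$ in $\Cay(D;S)$ yields $\varphi(g) = \pi(s_1)^{\pm 1} \cdots \pi(s_n)^{\pm 1}$, so $\hat\pi(g) = \sum_i \psi(\pi(s_i))$ depends only on~$g$, and concatenation of walks gives $\hat\pi(gh) = \hat\pi(g) + \hat\pi(h)$; verifying this well-definedness and homomorphism property is the main technical obstacle. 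With $\hat\pi$ in hand, I split on the order of $s \in S \cap A$: if $|s| > 2$, cycle-length preservation under~$\varphi$ forces $|\pi(s)| = |s| > 2$, and since every element of $D \setminus A$ is an involution, $\pi(s) \in A$; if $|s| = 2$, the hypothesis on~$A$ forces each $2$-primary cyclic factor of~$A$ to have order at least~$4$, so every involution of~$A$ is a square, and writing $s = b^2$ with $b \in A$ gives $\hat\pi(s) = 2\hat\pi(b) = 0$, again placing $\pi(s)$ in~$A$. Hence $\pi(S \cap A) \subseteq S \cap A$, with equality by finiteness, completing the proof via \cref{DihCCALem}.
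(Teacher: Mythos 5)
Your proposal is correct and follows essentially the same route as the paper: the same application of \cref{LikeDihedralNotCCA} for the forward direction, and the same reduction via \cref{abelian} and \cref{DihCCALem} with the same case split on the order of $s \in S \cap A$, including the key fact that involutions of~$A$ are squares when $\ZZ_2$ is not a direct factor. The only difference is cosmetic: where you track parity through $\psi \circ \varphi$ with $\psi \colon D \to D/A \cong \ZZ_2$ (a verification that is routine, not a real obstacle, since $\varphi(g) = \pi(s_1)^{\pm1} \cdots \pi(s_n)^{\pm1}$ along any walk and $\psi$ kills signs), the paper writes $\varphi(a)$ as a word with even multiplicity in each colour and concludes that it centralizes the self-centralizing subgroup~$A$ --- the same parity argument in different clothing.
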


\begin{proof}
($\Rightarrow$)
Suppose $A = A' \times \ZZ_2$, and $A'$ is not elementary abelian. 
Then the generalized dihedral group $A \rtimes \langle \sigma \rangle$ over~$A$ is isomorphic to $(A' \rtimes \langle \sigma \rangle) \times \ZZ_2$, so \cref{LikeDihedralNotCCA} tells us that it is not strongly CCA.

\medbreak

($\Leftarrow$)
Let $D = A \rtimes \langle \sigma \rangle$ be the generalized dihedral group over~$A$, and let $\varphi$ be a colour-permuting automorphism of a connected Cayley graph $\Cay(D;S)$, such that $\varphi(e) = e$. We may assume $A$~does not have $\ZZ_2$ as a direct factor (otherwise, the desired conclusion follows from the fact that every elementary abelian $2$-group is strongly CCA \csee{abelian}). From \cref{abelian}, we see that $A$ is strongly CCA. Hence, the desired conclusion will follow from \cref{DihCCALem} if we show that $\varphi(S \cap A) = S \cap A$.

Let $a \in S \cap A$. Since $\varphi$ is colour-permuting, we have $|\varphi(s)| = |s|$ for all $s \in S$. Also, we know that $|g| = 2$ for all $g \in D \setminus A$. Therefore, it is obvious that $\varphi(a) \in S \cap A$ if $|a| \neq 2$.

So we may assume $|a| = 2$. Since $A$~does not have $\ZZ_2$ as a direct factor, this implies that $a$ is a square in~$A$: that is, we have $a = x^2$, for some $x \in A$. Also, since $\Cay(D;S)$ is connected, we may write $x = s_1 s_2 \cdots s_n$ for some $s_1,\ldots,s_n \in S$. So $a = (s_1 s_2 \cdots s_n)^2$ can be written as a word in which every element of~$S$ occurs an even number of times. Since $\varphi$ is colour-permuting, this implies that $\varphi(a)$ can be written as a word in which, for each $s \in S$, the total number of occurrences of either $s$ or~$s^{-1}$ is even. Since $s$ and~$s^{-1}$ both either centralize~$A$ or invert it, this implies that $\varphi(a)$ centralizes~$A$. Since $A$ is self-centralizing in~$D$, we conclude that $\varphi(a) \in A$, as desired.
\end{proof}

\section{Groups of odd order} \label{OddOrderSect}

The following notation will be assumed throughout this section.

\begin{notation}
For a fixed Cayley graph $\Cay(G;S)$:
	\begin{itemize}
	\item $\A^0$ is the group of all colour-preserving automorphisms of $\Cay(G; S)$.
	\item $\widehat G$ is the subgroup of~$\A^0$ consisting of all left translations by elements of~$G$. (Although we do not need this terminology, it is often called the \emph{left regular representation} of~$G$.)
	\item $H_e$ is the stabilizer of the identity element~$e$ in~$\Cay(G;S)$, for any subgroup~$H$ of~$\A^0$.
	\end{itemize}
\end{notation}

\begin{rem} \label{Aff=N(G)}
It is well known (and very easy to prove) that a permutation of~$G$ is affine if and only if it normalizes~$\widehat G$ (see, for example \cite[Lem.~2]{Sehgal-NormCayRep}).
\end{rem}

\begin{lem} \label{StabIs2Grp}
$\A^0_e$ is a $2$-group.
\end{lem}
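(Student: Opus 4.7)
The plan is to show, equivalently, that $\A^0_e$ contains no element of odd prime order. Since $\A^0_e$ is finite, Cauchy's theorem then forces $|\A^0_e|$ to be a power of~$2$. By replacing a hypothetical counterexample $\varphi$ of order~$m$ with $\varphi^{m/p}$ for an odd prime $p \mid m$, I may assume $\varphi \in \A^0_e$ has odd prime order~$p$ and aim for a contradiction.

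The strategy is to study the fixed-point set $F = \{x \in G : \varphi(x) = x\}$, which contains~$e$ by assumption, and show that it is closed under right multiplication by every $s \in S$. Fix $x \in F$ and $s \in S$. The colour-preserving condition at~$x$ gives $\varphi(xs) \in \{xs, xs^{-1}\}$. If $s^2 = e$, these two values coincide, so $xs \in F$ and we are done. Otherwise $xs \ne xs^{-1}$, and I would argue that the possibility $\varphi(xs) = xs^{-1}$ leads to a contradiction: applying the colour-preserving condition at the vertex $xs^{-1}$ (whose $\{s,s^{-1}\}$-coloured neighbours of~$x$ are precisely $xs$ and $xs^{-1}$) gives $\varphi(xs^{-1}) \in \{xs, xs^{-1}\}$, and since $\varphi$ is a bijection already using $xs^{-1}$ as the image of $xs$, this forces $\varphi(xs^{-1}) = xs$. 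But then $\{xs, xs^{-1}\}$ is a $2$-cycle of~$\varphi$, which is impossible because a permutation of odd order has no orbit of length~$2$. Hence $\varphi(xs) = xs$, so $xs \in F$.

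Iterating the claim, starting from $e \in F$, shows that $F$ contains every product of elements of~$S$; since $S$ generates~$G$ (so that $\Cay(G;S)$ is connected, as is standard for this discussion), we obtain $F = G$, i.e., $\varphi$ is the identity. This contradicts $|\varphi| = p \ge 3$ and completes the proof.

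The only non-trivial step is the one just described: a colour-preserving automorphism fixing $x$ either fixes both $\{s,s^{-1}\}$-neighbours of $x$ or swaps them, and the swap is ruled out by odd order. Everything else is just iterating this observation along the generating set, so I expect the final write-up to be only a few lines.
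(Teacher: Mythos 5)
Your proof is correct, and it rests on the same local observation as the paper's: a colour-preserving automorphism fixing a vertex $x$ either fixes both $s^{\pm1}$-neighbours of~$x$ or swaps them. The only difference is the bookkeeping: the paper applies this to an arbitrary $\varphi \in \A^0_e$ and iterates over balls of increasing radius (showing $\varphi^{2^k}$ fixes all vertices at distance $\le k-1$ from~$e$, so every element has $2$-power order), whereas you reduce via Cauchy to an element of odd prime order and rule out the swap directly (no $2$-cycles in an odd-order permutation), so its fixed-point set propagates along~$S$ to all of~$G$; both arguments use connectedness of $\Cay(G;S)$, which the paper invokes implicitly through its diameter bound and you invoke through $\langle S\rangle = G$.
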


\begin{proof}
Let $\varphi \in \A^0_e$, so $\varphi$ is a colour-preserving automorphism of $\Cay(G;S)$ that fixes~$e$. If $C$ is any monochromatic cycle through~$e$, then either $\varphi$ is the identity on~$C$ or $\varphi$ reverses the orientation of~$C$. Therefore, $\varphi^2$ acts trivially on the union of all monochromatic cycles that contain~$e$. This implies that $\varphi^2$ acts trivially on all vertices at distance $\le 1$ from~$e$.

Repeating the argument shows that $\varphi^{2^k}$ acts trivially on all vertices at distance $\le k-1$ from~$e$. For $k$ larger than the diameter of $\Cay(G;S)$, this implies that $\varphi^{2^k}$ is trivial. So the order of~$\varphi$ is a power of~$2$.
\end{proof}

\begin{prop} \label{OddCCA->strong}
Let\/ $\Cay(G;S)$ be a connected Cayley graph on a group~$G$ of odd order. If\/ $\Cay(G;S)$ is CCA, then\/ $\Cay(G;S)$ is strongly CCA.
\end{prop}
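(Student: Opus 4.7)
The plan is to analyse the full group of colour-permuting automorphisms of $\Cay(G;S)$ --- call it $\A$ --- and show that the CCA hypothesis, combined with the oddness of $|G|$, forces $\widehat G \normal \A$. Once this is established, \cref{Aff=N(G)} immediately implies that every colour-permuting automorphism is affine, so $\Cay(G;S)$ is strongly CCA.

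First, note that $\A^0 \normal \A$, since $\A^0$ is the kernel of the natural action of $\A$ on the set of colour classes. Since $\Cay(G;S)$ is CCA, every element of $\A^0$ is affine, so \cref{Aff=N(G)} gives $\widehat G \normal \A^0$, yielding the semidirect decomposition $\A^0 = \widehat G \rtimes \A^0_e$.

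Now compare orders: $|\widehat G| = |G|$ is odd by hypothesis, while $|\A^0_e|$ is a power of~$2$ by \cref{StabIs2Grp}. Hence $\gcd\bigl(|\widehat G|, [\A^0 : \widehat G]\bigr) = 1$, so $\widehat G$ is a normal Hall subgroup of $\A^0$, and a short standard argument shows it is in fact characteristic: any subgroup $K \leq \A^0$ with $|K| = |\widehat G|$ would have $|K\widehat G / \widehat G|$ dividing both $|K|$ (odd) and $[\A^0 : \widehat G]$ (a $2$-power), forcing $K \subseteq \widehat G$ and hence $K = \widehat G$.

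Finally, since $\widehat G$ is characteristic in $\A^0$ and $\A^0 \normal \A$, we conclude $\widehat G \normal \A$. Applying \cref{Aff=N(G)} once more, every element of $\A$ is affine, which proves that $\Cay(G;S)$ is strongly CCA. The only mild obstacle is the passage from $\widehat G \normal \A^0$ to $\widehat G \normal \A$, which is handled by the normal-Hall-is-characteristic observation; the rest is a direct order-theoretic corollary of \cref{StabIs2Grp}.
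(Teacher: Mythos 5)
Your proof is correct and follows essentially the same route as the paper: use the CCA hypothesis and \cref{Aff=N(G)} to get $\widehat G \normal \A^0$, note $\A^0$ is normal in the full colour-permuting group as the kernel of the action on colours, and then use \cref{StabIs2Grp} and the odd order of~$G$ to see that $\widehat G$ is characteristic in~$\A^0$ (the paper phrases this as $\widehat G$ being the unique largest normal subgroup of odd order, while you use the equivalent normal-Hall argument), so that $\widehat G$ is normal in the whole group and \cref{Aff=N(G)} finishes. No gaps.
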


\begin{proof} 
Let $\A^\bullet$ be the group of all colour-permuting automorphisms of $\Cay(G; S)$.
Since $\A^\bullet$ acts on the set of colours, and $\A^0$ is the kernel of this action (and the kernel of a homomorphism is always normal), it is obvious that $\A^0 \normal \A^\bullet$.
Also, since $\Cay(G;S)$ is CCA, we have $\widehat G \normal \A^0$ \ccf{Aff=N(G)}. Furthermore, $|G|$ is odd, $|\A^0_e|$ is a power of~$2$, and $\A^0 = \widehat G \cdot \A^0_e$. Therefore, $\widehat G$ is the (unique) largest normal subgroup of odd order in~$\A^0$. The uniqueness implies that $\widehat G$ is \emph{characteristic} in~$\A^0$. (That is, it is fixed by all automorphisms of~$\A^0$.) So $\widehat G$ is a characteristic subgroup of the normal subgroup~$\A^0$ of~$\A^\bullet$. Since every characteristic subgroup of a normal subgroup is normal \cite[Thm.~2.1.2(ii), p.~16]{Gorenstein-FiniteGrps}, this implies $\widehat G \normal \A^\bullet$. Therefore $G$ is strongly CCA \csee{Aff=N(G)}.
\end{proof}

Wreath products $\ZZ_m \wr \ZZ_n$ provide examples of non-CCA groups of odd order \csee{WreathProdNotCCA}. We will see in \cref{OddNotCCAHas} that the following slightly more general construction is essential for understanding many of the other non-CCA groups of odd order.

\begin{eg} \label{SemiWreath}
Let $\alpha$ be an automorphism of a group~$A$, and let $n \in \ZZ^+$. Then we can define an automorphism~$\widetilde \alpha$ of $A^n$ by
	$$ \widetilde \alpha(w_1,\ldots,w_n) = \bigl( \alpha(w_n),  w_1, w_2,\ldots, w_{n-1} \bigr) .$$
It is easy to see that the order of $\widetilde \alpha$ is $n$ times the order of~$\alpha$, so we may form the corresponding semidirect product $A^n \rtimes \ZZ_{n|\alpha|}$. Let us call this the \emph{semi-wreathed product} of $A$ by~$\ZZ_n$, with respect to the automorphism~$\alpha$, and denote it $A \wr_\alpha \ZZ_n$. 
(If $\alpha$ is the trivial automorphism, then this is the usual wreath product $A \wr \ZZ_n$.)

Negating the first coordinate, as in \cref{WreathProdNotCCA}, shows that if $n > 1$ and $A$~is abelian, but not an elementary abelian $2$-group, then $A \wr_\alpha \ZZ_n$ is not CCA.
\end{eg}

\begin{rem}
Because it may be of interest to find minimal examples, we point out that every semi-wreathed product of odd order satisfying the conditions in the 
final paragraph 
of \cref{SemiWreath} must contain a subgroup that is isomorphic to a semi-wreathed product $A \wr_\alpha \ZZ_q$, where $A$~is an elementary abelian $p$-group, $p$~and~$q$ are primes (not necessarily distinct), $\alpha$~is an automorphism of $q$-power order, and no nontrivial, proper subgroup of~$A$ is invariant under~$\alpha$.
\end{rem}

\begin{defn}[{}{\cite[p.~5]{Gorenstein-FiniteGrps}}]
Let $G$ be a group. For any subgroups $H$ and~$K$ of~$G$, such that $K \normal H$, the quotient $H/K$ is said to be a \emph{section} of~$G$.
\end{defn}

\begin{thm} \label{OddNotCCAHas}
Any non-CCA group of odd order has a section that is isomorphic to either:
	\begin{enumerate}
	\item \label{OddNotCCAHas-semiwreath}
	a semi-wreathed product $A \wr_\alpha \ZZ_n$ \csee{SemiWreath}, where $A$~is a nontrivial, elementary abelian group\/ \textup(of odd order\textup) and $n > 1$,
	or
	\item \label{OddNotCCAHas-21}
	the\/ \textup(unique\/\textup) nonabelian group of order\/~$21$.
	\end{enumerate}
\end{thm}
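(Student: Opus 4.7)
The plan is a minimal-counterexample argument. Suppose the theorem fails and let $G$ be a non-CCA group of odd order, minimal among those having no section isomorphic to a semi-wreathed product $A\wr_\alpha\ZZ_n$ of the stated form, nor to the nonabelian group of order~$21$. Since a section of a section of~$G$ is itself a section of~$G$, minimality forces every proper section of~$G$ to be CCA. Using \cref{CCAFixId}, fix a connected Cayley graph $\Cay(G;S)$ and a colour-preserving $\varphi\in\A^0_e$ that is not a group automorphism. By \cref{StabIs2Grp}, $|\varphi|$ is a power of~$2$, so the oddness of $|G|$ yields $\widehat G\cap\A^0_e=\{e\}$ and the semidirect decomposition $\A^0=\widehat G\rtimes\A^0_e$; by \cref{Aff=N(G)}, $\varphi$ does not normalize~$\widehat G$. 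A first observation is that $\varphi(s)\in\{s^{\pm 1}\}$ for every $s\in S$, since $\varphi$ preserves the colour of the edge $e\edge s$; hence $\varphi$ acts on each monochromatic cycle through~$e$ as either the identity or inversion about~$e$.

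The first substantive step is to localize the failure of affineness to an abelian normal subgroup. For any $\varphi$-invariant normal subgroup $N\normal G$, the induced map on $\Cay(G/N;\overline S)$ is colour-preserving and fixes the identity; because $G/N$ is a proper section, it is CCA, so the induced map lies in $\Aut(G/N)$. A companion argument on~$N$ (using $\varphi(N)=N$ together with the observation about monochromatic cycles) similarly forces $\varphi|_N\in\Aut(N)$. Hence all of $\varphi$'s deviation from being a homomorphism must be concentrated in the ``twist'' between~$N$ and~$G/N$ — precisely the kind of twist exhibited by the negate-a-coordinate automorphism of \cref{WreathProdNotCCA,SemiWreath}.

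The second substantive step is to pick $N$ minimal normal in~$G$, so that $N$ is elementary abelian of odd prime-power order, and then read a semi-wreath structure off the $\varphi$-action. The $\widehat G$-conjugates of~$N$ together with their $\varphi$-images should partition into $n\ge 2$ parallel copies permuted cyclically by a lift of a suitable element of~$G/N$; the conjugation action of that lift restricted to one copy supplies the automorphism~$\alpha$ of \cref{SemiWreath}, and the subgroup generated by the $n$ copies and the lift is a candidate section $A\wr_\alpha\ZZ_n$.

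The main obstacle will be the endgame case analysis. I must show that either the construction above genuinely produces a semi-wreathed section with $A$ nontrivial elementary abelian and $n>1$, or the configuration collapses to the smallest degenerate case — a Frobenius group $\ZZ_7\rtimes\ZZ_3$ with $\varphi$ essentially the permutation pictured in \cref{21NotCCAFig} — which delivers the nonabelian group of order~$21$ as the required section. Making this dichotomy sharp will require carefully balancing the $2$-power orbit sizes of~$\varphi$ on~$G$ against the odd prime divisors of $|N|$ and $|G/N|$, and invoking the connectedness of $\Cay(G;S)$ to propagate the local cyclic-block pattern into a global semidirect-product structure. I expect this final step to be where most of the technical work lies, and in particular to be the place where the special role of the prime pair $(3,7)$ emerges as the unique obstruction to the generic semi-wreath conclusion.
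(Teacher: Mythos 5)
Your outline has two genuine gaps, and they sit exactly where the theorem's content lies. First, your ``localization'' step is unsupported: a colour-preserving $\varphi$ fixing $e$ has no reason to leave a chosen normal subgroup $N$ of $G$ invariant, and even when it does, $S\cap N$ need not generate $N$, so ``$\varphi|_N$ is a colour-preserving automorphism of a connected Cayley graph on $N$'' does not come for free (the paper must enlarge the connection set to $(S\cap N)^{\langle S\setminus N\rangle}\cup(\langle S\setminus N\rangle\cap N)$ and prove connectedness). The paper avoids your difficulty by working not with normal subgroups of $G$ but with a minimal normal subgroup $\N$ of the colour-preserving automorphism group $\A$, running an induction on $|\A|$ over all subgroups containing $\widehat G$, and exploiting that $\widehat G$ is the unique largest odd-order normal subgroup (hence characteristic) by \cref{StabIs2Grp}. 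When $\N$ is elementary abelian one gets $\N=\widehat N\subseteq\widehat G$, shows $\N\A_e\normal\A$, and the semi-wreathed section comes from a weight-space decomposition of $\N$ under the elementary abelian $2$-group $\A_e$, with $\widehat g$ permuting the weight spaces in an orbit of length $n>1$; this is quite different from, and much more precise than, your ``$\widehat G$-conjugates of $N$ partition into cyclic blocks'' picture, for which you give no argument.

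Second, and more seriously, your expected endgame misidentifies where the order-$21$ exception comes from, and you concede that this step is not carried out. Since $G$ has odd order, every minimal normal subgroup of $G$ is elementary abelian, so an analysis confined to $G$'s own normal structure will never surface the group of order $21$ as a forced exception; no balancing of $2$-power orbit sizes against the primes $3$ and $7$ will do this. In the paper the exception arises only in the second case of the induction, where the minimal normal subgroup $\N$ of $\A$ is a product of nonabelian simple groups: since $\widehat G$ is a $2$-complement in $\A$, each simple factor has a $2$-complement, and a CFSG-based theorem (Mart\'{\i}nez-P\'erez--Willems) forces $\L_1\iso\PSL(2,p)$ with $p$ a Mersenne prime; the case $p=7$ produces the $2$-complement of $\PSL(2,7)$ --- the nonabelian group of order $21$ --- as a section of $G$, while $p>7$ is eliminated by an argument with the cyclic subgroups of the dihedral Sylow $2$-subgroups of the factors $\L_i$. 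Your proposal contains no mechanism for detecting or handling this nonabelian case of the automorphism group, so as written it cannot be completed along the lines you describe.
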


\begin{proof}
Assume $\Cay(G;S)$ is a connected Cayley graph on a group~$G$ of odd order that does not have a section as described in either \pref{OddNotCCAHas-semiwreath} or~\pref{OddNotCCAHas-21}.
We will show, by induction on the order, that if $\A$ is any subgroup of~$\A^0$ that contains~$\widehat G$, then $\widehat G$ is a normal subgroup of~$\A$.
(Then taking $\A = \A^0$ implies that $\Cay(G;S)$ is CCA \csee{Aff=N(G)}.)

It is important to note that this conclusion implies $\widehat G$ is a \emph{characteristic} subgroup of~$\A$ (because \cref{StabIs2Grp} implies that $\widehat G$ is the unique largest normal subgroup of odd order).
For convenience, we write $\widehat G \CHAR \A$ when $\widehat G$ is characteristic.

Let $\N$ be a minimal normal subgroup of~$\A$. Then $\N$ is either elementary abelian or the direct product of (isomorphic) nonabelian simple groups \cite[Thm.~2.1.5, p.~17]{Gorenstein-FiniteGrps}, and we consider the two possibilities as separate cases.

\setcounter{case}{0}

\begin{case} \label{Odd-NAbel}
Assume $\N$ is elementary abelian. 
\end{case}
Since the Sylow $2$-subgroup $\A_e$, being the stabilizer of a vertex, does not contain any normal subgroups of~$\A$, we know that $\N$ is not contained in a Sylow $2$-subgroup. Hence, $\N$ is not a $2$-group, so it must be a $p$-group for some odd prime~$p$.  Therefore, since $\widehat G$ is the largest normal subgroup of odd order, we have $\N \subseteq \widehat G$, so 
	$$ \text{$\N = \widehat N$, for some (elementary abelian) normal subgroup~$N$ of~$G$} .$$

Let $\N^+$ be the kernel of the action of~$\A$ on $G/N$, so $\A/\N^+$ is a group of colour-preserving automorphisms of $\Cay(G/N; \overline S)$, where $\overline S$ is the image of~$S$ in $G/N$.
Therefore, by induction on $|\A|$, we know that $\widehat G\N^+/\N^+$ is normal in $\A/\N^+$, so $\widehat G \N^+$ is normal in~$\A$. Then we may assume $\widehat G \N^+ = \A$, for otherwise, by induction on $|\A|$, we would know $\widehat G \CHAR \widehat G \N^+$, so $\widehat G \normal \A$, as desired. Since $|G|$ is odd, this implies that $\N^+$ contains a Sylow $2$-subgroup of~$\A$. In fact, since $\N^+$ is normal and all Sylow $2$-subgroups are conjugate, this implies that $\N^+$ contains every Sylow $2$-subgroup. In particular, it contains~$\A_e$. Therefore $\N^+ = \N \A_e$, so
	$$ \N \A_e \normal \A .$$
This means that $\A_e$ acts trivially on $G/N$, so, for every $s \in S \setminus N$, $\A_e$ preserves the orientation of every $s$-edge. (This uses the fact that, since $|s|$ is odd, $s \not\equiv s^{-1} \pmod N$ if $s \notin N$.) This implies:
	 \begin{align} \label{NotInNCCA}
	  \text{for $\varphi \in \A_e$, $g \in G$, and $x \in \langle S \setminus N \rangle$, we have $\varphi(gx) = \varphi(g) \, x$} 
	  . \end{align}

Let $(S \cap N)^{\langle S \setminus N\rangle} = \{\, g s g^{-1} \mid s \in S \cap N, \ g \in \langle S \setminus N\rangle\,\}$.
Now, suppose $t \in (S \cap N)^{\langle S \setminus N\rangle}$ and $h \in N$. There exists $s \in S \cap N$ and $x \in \langle S \setminus N \rangle$, such that $x s x^{-1} = t$. From \pref{NotInNCCA} and the fact that $\varphi$ is colour-preserving, we see that
	$$ \varphi(h \, t) = \varphi( h \, x s x^{-1}) = \varphi(h) \, x \, s^{\pm1} x^{-1} = \varphi(h) \, t^{\pm1} .$$
Hence, $\varphi|_N$ is a colour-preserving automorphism of 
	$$\Cay \Bigl( N; (S \cap N)^{\langle S \setminus N\rangle} \cup \bigl( \langle S \setminus N \rangle \cap N \bigr) \Bigr) .$$
Since $S$ generates~$G$, it is easy to see that this Cayley graph is connected. 

Note that $C_{\A_e}(\N)$ is normalized by both $\N$ and~$\A_e$, so it is a normal subgroup of $\N \A_e$. Therefore, it must be trivial (since the largest normal $2$-subgroup of $\N \A_e$ is characteristic, and is therefore normal in~$\A$, but the stabilizer $\A_e$ does not contain any nontrivial normal subgroups of~$\A$). So 
	\begin{align} \label{AeFaithful}
	\text{$\A_e$ acts faithfully by conjugation on~$\N$.}
	\end{align}

Also, we know that $\varphi|_N$ is an automorphism of~$N$ (by \cref{Aff=N(G)}, since $\varphi$ normalizes~$\N = \widehat N$). Since, being a colour-preserving automorphism, $\varphi$ either centralizes or inverts every element of the generating set of~$N$, this implies that $\varphi^2|_N$ is trivial. Since this is true for every $\varphi \in \A_e$, we conclude that $\A_e$ acts on~$N$ via an elementary abelian $2$-group. From \pref{AeFaithful}, we conclude that $\A_e$ is elementary abelian. 	

We can think of $\N$ as a vector space over~$\ZZ_p$, and, for each homomorphism $\gamma \colon \A \to \{\pm1\}$, let 
	$$\N_\gamma = \{\, n \in \N \mid \text{$a n a^{-1} = \gamma(a) \, n$ for all $a \in \A_e$} \,\} .$$
(This is called the ``weight space'' associated to~$\gamma$.)
Since every linear transformation satisfying $T^2 = I$ is diagonalizable, and $\A_e$ is commutative, the elements of~$\A_e$ can be simultaneously diagonalized.  This means that if we let $\Gamma = \bigl\{\,\gamma \mid \N_\gamma \neq \{0\} \,\bigr\}$, then, since eigenspaces for different eigenvalues are always linearly independent, we have $\N = \bigoplus_{\gamma \in \Gamma} \N_\gamma$. This direct-sum decomposition is canonically defined from the action of~$\A_e$ on~$\N$. Since $\widehat G$ acts on $\N\A_e$ (by conjugation), we conclude that the action of~$\widehat G$ on~$\N$ by conjugation must permute the weight spaces. 
More precisely, there is an action of $G$ on~$\Gamma$, such that $\widehat g \N_\gamma \widehat g^{-1} = \N_{g\gamma}$ for all $g \in G$. Since $N$ is abelian, this factors through to a well-defined action of $G/N$ on~$\Gamma$.

If the $G$-action on~$\Gamma$ is trivial, then every weight space is $G$-invariant, which implies that the action of~$\widehat G$ on~$\N$ commutes with the action of~$\A_e$. Since $\A_e$ acts faithfully, we conclude that $\widehat G$ centralizes $\A_e \N/\N$; that is, $[\widehat G,\A_e] \subseteq \N \subseteq \widehat G$. So $\A_e$ normalizes~$\widehat G$, as desired.

We may now assume that the $G$-action is nontrivial, so there is some $g \in G$ with an orbit of some length $n > 1$ on~$\Gamma$. Let $\gamma_0$ be an element of this orbit, so $\widehat g^n$ normalizes~$\N_{\gamma_0}$.  
Since $S \setminus N$ generates $G/N$, we may assume $g \in S \setminus N$, so \pref{NotInNCCA} tells us that $\langle \widehat g \rangle \cap \N$ is centralized by~$\A_e$. However, the minimality of~$\N$ implies that $C_{\N}(\A_e) = \N \cap Z(\N \A_e)$ is trivial. Therefore, $\langle \N, \widehat g \rangle = \N \rtimes \langle \widehat g \rangle$ is a semidirect product. So 
	$$\langle \N_{\gamma_0} , \widehat g \rangle = \left( \bigoplus\nolimits_{\gamma \in \langle g \rangle \gamma_0} \N_\gamma \right) \rtimes \langle \widehat g \rangle .$$
Then modding out $C_{\langle \widehat g \rangle}(\N_{\gamma_0})$ yields a section of~$\widehat G$ that is isomorphic to $\N_{\gamma_0} \wr_\alpha \ZZ_n$, where $\alpha$ is the automorphism of~$\N_{\gamma_0}$ induced by the conjugation action of~$\widehat g^n$. So $G$ has a semi-wreathed section, as described in \pref{OddNotCCAHas-semiwreath}.
This completes the proof of this case.

\begin{case} \label{Odd-NSimple}
Assume $\N = \L_1 \times \cdots \times \L_r$, where each $\L_i$~is a nonabelian simple group, and $\L_i \iso \L_1$ for all~$i$. 
\end{case}
We know that $\A = \widetilde G \A_e$, $\A_e$~is a $2$-group, and $|G|$ is odd, so $\widehat G$ is a $2$-complement in~$\A$. (By definition, this means that $|\widehat G|$ is odd and $|\A: \widehat G|$ is a power of~$2$ \cite[p.~88]{Isaacs-FiniteGrps}.) So $\L_1$~is a nonabelian simple group that has a $2$-complement (namely, $\widehat G \cap \L_1$). By using the Classification of Finite Simple Groups, it can be shown that this implies $\L_1 \iso \PSL(2,p)$, for some Mersenne prime $p \ge 7$ (see \cite[Thm.~1.3]{MartinezPerezWillems-TIProblem}). 

Note that $\A_e \cap \L_i$ is a Sylow $2$-subgroup of $\L_i \iso \PSL(2,p)$. Therefore, it is dihedral \cite[Lem.~15.1.1(iii)]{Gorenstein-FiniteGrps} and has order $p+1$ (because $p$~is a Mersenne prime). 
Let 
	\begin{itemize}
	\item $\C_i$ be the unique cyclic subgroup of order $(p+1)/2$ in $\A_e \cap \L_i$, 
	\item $\C_i^2$ be the unique subgroup of index~$2$ in~$\C_i$,
	and
	\item $\C^2 = \C_1^2 \times \cdots \times \C_r^2 \subset \A_e \cap (\L_1 \times \cdots \times \L_r)$.
	\end{itemize}
Since every element of~$\C_i$ is a colour-preserving automorphism, it either fixes or inverts each element of~$S$, so we know that $\C_i^2$ fixes every element of~$S$. Since stabilizers are conjugate, this implies $\widehat s^{-1} \C^2 \widehat s \subseteq \A_e$, for every $s \in S$. 
We must have $p > 7$, for otherwise $\widehat G \cap \L_i$, being the $2$-complement of $\PSL(2,7)$, would be the nonabelian group of order~$21$, as in \pref{OddNotCCAHas-21}. 
This implies that $\C_i^2$ is the unique cyclic subgroup of order $(p+1)/4$ in the dihedral group $\A_e \cap \L_i$, so we must have $\widehat s^{-1} \C^2 \widehat s = \C^2$, which means that $\widehat s$ normalizes~$\C^2$. 
Since this holds for every $s$ in the generating set~$S$, we conclude that $\widehat G$ normalizes~$\C^2$. 


Note that $\A_e$ normalizes $\A_e \cap \N$, and that $\C^2 \CHAR \A_e \cap \N$ (since, as was mentioned above, $\C_i$ is the unique cyclic subgroup of its order in $A_e \cap \L_i$). Therefore, $\C^2 \normal \A_e$. We conclude that $\C^2$ is normal in $\widehat G \A_e = \A$. 
So $\C_1^2 = \C^2 \cap \L_1$ is normal in~$\L_1$, contradicting the fact that $\L_1$ is simple.
\end{proof}

\begin{lem} \label{AssumeSPrimePower}
A group $G$ is strongly CCA\/ \textup(or CCA\/\textup) if and only if, for every generating set~$S$ of~$G$ such that every element of~$S$ has prime-power order, the Cayley graph\/ $\Cay(G;S)$ is strongly CCA\/ \textup(or CCA\/\textup).
\end{lem}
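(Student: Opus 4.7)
The forward implication is immediate. For the converse, suppose every Cayley graph on~$G$ whose generators all have prime-power order is (strongly) CCA. My plan is: given an arbitrary generating set~$S$ of~$G$ and a colour-permuting (respectively colour-preserving) automorphism $\varphi$ of $\Cay(G;S)$, refine $S$ into such a prime-power-order generating set~$S'$, show that $\varphi$ remains a colour-permuting (resp.\ colour-preserving) automorphism of~$\Cay(G;S')$, and then invoke the hypothesis to conclude $\varphi$ is affine.

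For the construction of~$S'$: for each $s \in S \setminus \{e\}$ with $|s| = \prod_p p^{a_p}$ and each prime $p \mid |s|$, take the $p$-part $s_p := s^{m_p}$, where $m_p \in \ZZ$ is chosen by the Chinese Remainder Theorem so that $s_p$ has order $p^{a_p}$ and $\prod_p s_p = s$. Let
	$$ S' = \{\, s_p^{\pm 1} \mid s \in S\setminus\{e\},\ p \text{ prime},\ p \mid |s|\,\} . $$
Then $S'$ is inverse-closed, generates~$G$ (because every $s\in S$ lies in $\langle S'\rangle$), and consists of prime-power-order elements.

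The key step is to check that $\varphi$ remains colour-permuting on~$\Cay(G;S')$. Let $\pi$ be the permutation of~$S$ associated to~$\varphi$; since $|\pi(s)| = |s|$, the same integer~$m_p$ extracts the $p$-part of~$\pi(s)$. For each $s \in S$ and $g \in G$, induction on~$m$ (using that $\varphi$ is colour-permuting and injective) gives a sign $\epsilon(g,s) \in \{\pm 1\}$ such that
	$$ \varphi(gs^m) = \varphi(g)\,\pi(s)^{\epsilon(g,s)\,m} \quad\text{for all } m \in \ZZ . $$
Specializing $m = m_p$ yields $\varphi(gs_p) = \varphi(g)\,\pi(s)_p^{\pm 1}$, so $\varphi$ sends each $s_p$-coloured edge to a $\pi(s)_p$-coloured edge.

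I expect the only subtle point to be verifying that this prescription descends to a well-defined permutation of the colour classes of~$S'$: if $s, t \in S$ satisfy $s_p = t_q$ in~$G$ (which forces $p = q$ and equal exponents), the colour classes $\{\pi(s)_p^{\pm 1}\}$ and $\{\pi(t)_q^{\pm 1}\}$ must coincide. Setting $g = e$ in the displayed formula gives $\varphi(s_p) = \pi(s)_p^{\pm 1}$ and $\varphi(t_q) = \pi(t)_q^{\pm 1}$, and injectivity of~$\varphi$ applied to $s_p = t_q$ forces $\pi(s)_p^{\pm 1} = \pi(t)_q^{\pm 1}$, as required. In the colour-preserving case $\pi = \mathrm{id}$, so $\pi(s)_p = s_p$ and the argument carries over verbatim. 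The hypothesis on~$S'$ then makes $\varphi$ affine, completing the proof.
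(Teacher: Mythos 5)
Your proposal is correct and follows essentially the same route as the paper: both rest on the observation that $\varphi(gs^k) = \varphi(g)\,\pi(s)^{\pm k}$ for all $k$, and then replace $S$ by a generating set of prime-power-order powers of its elements. The only (cosmetic) difference is that you extract just the $p$-parts $s^{m_p}$ via the Chinese Remainder Theorem, whereas the paper takes all powers of elements of $S$ that have prime-power order; both generate $G$ for the same reason.
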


\begin{proof}
Suppose $\varphi$ is a colour-permuting automorphism of some connected Cayley graph $\Cay(G;S)$. There is a permutation $\pi$ of~$S$, such that $\varphi( gs ) = \varphi(g) \, \pi(s)^{\pm1}$, for all $g \in G$ and $s \in S$. (Furthermore, if $\varphi$ is colour-preserving, then $\pi$ can be taken to be the identity permutation.)
By induction on~$k$, this implies $\varphi(gs^k) = \varphi(g) \, \pi(s)^{\pm k}$, for all $k \in \ZZ$. Hence, if we let $S^* = \{\, s^k \mid s \in S,\ k \in \ZZ \,\}$, then $\varphi$ is a colour-permuting automorphism of $\Cay(G;S^*)$. Now, let
	$$ S_0 = \{\, t \in S^* \mid \text{$|t|$ is a prime-power} \,\} .$$
Then $\varphi$ is a colour-permuting automorphism of $\Cay(G;S_0)$, and $S_0$ generates~$G$, since every element~$s$ of the generating set~$S$ can be written as a product of elements of $\langle s \rangle$ that have prime-power order \cite[Thm.~1.3.1(iii), p.~9]{Gorenstein-FiniteGrps}, and therefore belong to~$S_0$. (Furthermore, $\varphi$ is colour-preserving if the permutation $\pi$ is the identity permutation.)
\end{proof}

\begin{lem} \label{ModCyclic}
Suppose 
	\begin{itemize}
	\item $C$ is a cyclic, normal subgroup of a group~$H$,
	\item $|C|$ is relatively prime to $|H: C|$,
	\item no element of $H \setminus C$ centralizes~$C$,
	and
	\item $\alpha$ is an automorphism of~$H$.
	\end{itemize}
Then $\alpha(h) \in h C$, for every $h \in H$.
\end{lem}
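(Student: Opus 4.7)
The plan is to first show that $C$ is characteristic in~$H$, and then use the cyclicity of~$C$ (hence commutativity of~$\mathrm{Aut}(C)$) together with the faithfulness of the conjugation action of $H/C$ on~$C$.

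First, I would argue $\alpha(C) = C$. Since $|C|$ is coprime to $|H:C|$, $C$ is a normal Hall subgroup, so for each prime $p$ dividing $|C|$, the Sylow $p$-subgroup $C_p$ of~$C$ is a full Sylow $p$-subgroup of~$H$. As $C \normal H$, each $C_p$ is normal in~$H$, hence is the unique Sylow $p$-subgroup of~$H$ and is therefore characteristic in~$H$. Since $C$ is the product of these characteristic subgroups (using that $C$ is cyclic, so equals the internal direct product of its Sylow subgroups), $C$ itself is characteristic in~$H$. Thus $\alpha$ restricts to an automorphism $\alpha_C$ of~$C$.

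Next, the hypothesis that no element of $H \setminus C$ centralizes~$C$ says exactly that the conjugation homomorphism $\phi \colon H \to \mathrm{Aut}(C)$ has kernel~$C$, i.e., $H/C$ embeds faithfully into $\mathrm{Aut}(C)$. Because $C$ is cyclic, $\mathrm{Aut}(C) \iso (\ZZ/|C|\ZZ)^\times$ is abelian. Now fix $h \in H$ and any $c \in C$. Applying $\alpha$ to $hch^{-1} \in C$ yields
\[
\alpha_C(hch^{-1}) = \alpha(h)\,\alpha_C(c)\,\alpha(h)^{-1}.
\]
Writing $\phi_g \in \mathrm{Aut}(C)$ for conjugation by~$g$, this gives $\alpha_C \circ \phi_h = \phi_{\alpha(h)} \circ \alpha_C$, so
\[
\phi_{\alpha(h)} = \alpha_C \circ \phi_h \circ \alpha_C^{-1} = \phi_h,
\]
where the last equality is because $\mathrm{Aut}(C)$ is abelian. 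Therefore $\alpha(h)h^{-1}$ centralizes~$C$, and by hypothesis this forces $\alpha(h)h^{-1} \in C$, i.e., $\alpha(h) \in hC$.

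There is no real obstacle here: the only step that takes a moment to see is that $C$ is characteristic, and that follows cleanly from the Hall-plus-normal observation together with cyclicity. Everything else is a direct calculation exploiting that $\mathrm{Aut}(C)$ is abelian and that $C$ is self-centralizing in~$H$.
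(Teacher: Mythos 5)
Your proof is correct and takes essentially the same route as the paper: both first note that $C$ is characteristic in~$H$ (being a normal Hall subgroup), then compare conjugation by~$h$ with conjugation by~$\alpha(h)$ on~$C$, and finish using the hypothesis that no element of $H \setminus C$ centralizes~$C$. The only cosmetic difference is that the paper expresses $\alpha|_C$ as a power map $c \mapsto c^r$ and computes directly, whereas you invoke the commutativity of $\Aut(C)$; for a cyclic group these are interchangeable observations.
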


\begin{proof}
Since no other subgroup of~$H$ has the same order as~$C$, we know that $\alpha|_C$ is an automorphism of~$C$, so there exists $r \in \ZZ$, such that $\alpha(c) = c^r$, for every $c \in C$. Then, for every $h \in H$ and $c \in C$, we have
	$$ \alpha(h) \, c^r \, \alpha(h)^{-1} 
	= \alpha(h) \, \alpha(c) \, \alpha(h)^{-1} 
	= \alpha(hch^{-1})
	= (hch^{-1})^r
	= hc^rh^{-1} ,$$
so $h^{-1} \, \alpha(h)$ centralizes~$C$. By assumption, this implies $h^{-1} \, \alpha(h)  \in C$, as desired.
\end{proof}

\begin{cor} \label{OrderOfNonCCA}
The following are equivalent:
\noprelistbreak
	\begin{enumerate}
	\item \label{OrderOfNonCCA-NotCCA}
	There is a group of order~$n$ that is not CCA.
	\item \label{OrderOfNonCCA-NotStrong}
	There is a group of order~$n$ that is not strongly CCA.
	\item \label{OrderOfNonCCA-Divisible}
	$n \ge 8$, and $n$~is divisible by either $4$, $21$, or a number of the form $p^q \cdot q$, where $p$ and~$q$ are primes\/ \textup(not necessarily distinct\/\textup) and $p$~is odd.
	\end{enumerate}
\end{cor}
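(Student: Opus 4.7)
The three conditions form a triangle. \pref{OrderOfNonCCA-NotCCA} $\Rightarrow$ \pref{OrderOfNonCCA-NotStrong} is immediate because every strongly CCA group is CCA. For \pref{OrderOfNonCCA-Divisible} $\Rightarrow$ \pref{OrderOfNonCCA-NotCCA}, I would exhibit, in each divisibility case, an explicit non-CCA group of order~$n$: if $4 \mid n$ with $n \geq 8$, the generalized dicyclic group $\mathrm{Dic}(\ZZ_{n/2}, n/4)$ has order~$n$ and is not CCA by \fullcref{4NotCCA}{dicyclic}; if $21 \mid n$, the direct product $G_{21} \times K$ for any group~$K$ of order~$n/21$ is not CCA by \cref{21NotCCA} and \cref{NotCCAxAny}; if $p^q \cdot q \mid n$ with $p$ an odd prime and $q$ a prime, the direct product $(\ZZ_p \wr \ZZ_q) \times K$ with $|K| = n/(p^q \cdot q)$ is not CCA by \cref{WreathProdNotCCA} and \cref{NotCCAxAny}.

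The substantive implication is \pref{OrderOfNonCCA-NotStrong} $\Rightarrow$ \pref{OrderOfNonCCA-Divisible}, which I would prove by contrapositive: assume $n$ fails \pref{OrderOfNonCCA-Divisible} and show that every group~$G$ of order~$n$ is strongly CCA. For $n < 8$ this is direct, since each such group is either abelian without a direct factor of the forms in \fullcref{abelian}{factor} (hence strongly CCA by \cref{abelian}) or is $S_3 \iso D_6$ (strongly CCA by \cref{DihIff}). For $n \geq 8$ odd, \cref{OddCCA->strong} reduces the task to showing $G$ is CCA; if $G$ were not CCA, \cref{OddNotCCAHas} would yield a section of~$G$ isomorphic either to the nonabelian group~$G_{21}$ (forcing $21 \mid n$) or to a semi-wreathed product $A \wr_\alpha \ZZ_k$ with~$A$ nontrivial elementary abelian of odd order and $k>1$. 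In the latter case, the remark following \cref{SemiWreath} refines the section into a subsection $B \wr_\beta \ZZ_q$ with $B$ elementary abelian of order~$p^r$ (so $p$ an odd prime), $q$ a prime, and $\beta$ of $q$-power order; the order of this subsection is divisible by $p^q \cdot q$, so $p^q \cdot q \mid n$---again contradicting the hypothesis.

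The main obstacle is the remaining case $n = 2m$ with $m \geq 5$ odd. Applying Cayley's theorem to any involution of~$G$---which, as a product of $m$ transpositions, is an odd permutation---shows that $H = G \cap A_{2m}$ is a normal subgroup of order~$m$, and by Schur--Zassenhaus, $G = H \rtimes \langle\sigma\rangle$ for some involution~$\sigma$. Since $m$ inherits the same non-divisibility conditions as~$n$, the odd case above yields that~$H$ is strongly CCA. When $\sigma$ centralizes~$H$, $G = H \times \ZZ_2$ with coprime orders, and \cref{GxHPrime} concludes. The delicate subcase is when $\sigma$ induces a nontrivial involution~$\tau$ of~$H$. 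Here, given a colour-permuting automorphism~$\varphi$ of a connected $\Cay(G;S)$ with $\varphi(e) = e$, I would use \cref{AssumeSPrimePower} to assume every element of~$S$ has prime-power order, so that $S$ partitions as $S_0 \sqcup S_1$ with $S_0 \subseteq H$ (odd prime-power order) and $S_1 \subseteq G \setminus H$ (involutions). Because colour-permuting automorphisms preserve element orders, $\varphi$ respects this partition; the key step is to argue that $\varphi$ then preserves~$H$ setwise, so that its restriction is a colour-permuting automorphism of a connected Cayley graph on~$H$ whose generating set consists of $S_0$ together with the products $s_1 s'_1$ with $s_1, s'_1 \in S_1$. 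The strongly-CCA property of~$H$ forces $\varphi|_H \in \Aut(H)$, and a final compatibility check on the involutions in~$S_1$ (using that each conjugates~$H$ by~$\tau$) yields $\varphi \in \Aut(G)$.
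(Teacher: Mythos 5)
Your treatment of \pref{OrderOfNonCCA-NotCCA}$\Rightarrow$\pref{OrderOfNonCCA-NotStrong}, of \pref{OrderOfNonCCA-Divisible}$\Rightarrow$\pref{OrderOfNonCCA-NotCCA} (dicyclic, $G_{21}$ and wreath-product examples padded by direct factors), and of the odd-order part of \pref{OrderOfNonCCA-NotStrong}$\Rightarrow$\pref{OrderOfNonCCA-Divisible} (via \cref{OddCCA->strong} and \cref{OddNotCCAHas}) is fine and matches the paper. The gap is in the remaining case $n=2m$ with $m$ odd, which is where essentially all of the paper's proof is spent. Your plan is to restrict a colour-permuting $\varphi$ to the index-two subgroup $H$ and view $\varphi|_H$ as a colour-permuting automorphism of the \emph{connected} Cayley graph $\Cay\bigl(H;\,S_0\cup S_1S_1\bigr)$, then invoke the strong-CCA property of~$H$. (Preserving $H$ setwise is indeed easy, by the parity of the number of $S_1$-edges on a path; that part is fine.) But $H$ need not be generated by $S_0\cup S_1S_1$: a Reidemeister--Schreier computation with transversal $\{e,u\}$, $u\in S_1$, gives $H=\langle S_0\cup S_1S_1\cup uS_0u\rangle$, and the conjugates $uS_0u$ cannot be dropped. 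Concretely, take $n=78$ (not divisible by $4$, $21$, or $p^q\cdot q$), $G=\ZZ_{13}\rtimes\ZZ_6=\langle\, x,a\mid x^{13}=a^6=e,\ a^{-1}xa=x^4\,\rangle$, and $S=\{a^{\pm2},\,xa^3\}$, so every element of~$S$ has prime order and $S=S^{-1}$. Here $u=xa^3$ is an involution and $ua^2u=x^{-2}a^2$, so $\langle S\rangle=G$; but $S_0\cup S_1S_1=\{a^{\pm2},e\}$ generates a subgroup of order~$3$, while $H=\langle x,a^2\rangle$ has order~$39$. So the graph on~$H$ to which you want to apply strong-CCA is disconnected, and you cannot simply enlarge the connection set by $uS_0u$, because for a colour-permuting $\varphi$ one only gets $\varphi(g\,us_0u)=\varphi(g)\,\pi(u)^{\pm1}\pi(s_0)^{\pm1}\pi(u)^{\pm1}$ with independent signs, which need not equal $\varphi(g)\,\bigl(\pi(u)\pi(s_0)\pi(u)\bigr)^{\pm1}$.

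The concluding ``final compatibility check'' also glosses over exactly the hard part. Even granting $\varphi|_H\in\Aut H$, one must exclude the possibility $\varphi(xs)=\varphi(x)\,\pi(s)^{-1}$ on edges meeting $G\setminus H$, and this is a genuine argument, not a routine verification. The paper gets it only after further structural reductions that your outline omits: $n$ is square-free (otherwise $p^2\cdot 2\mid n$), so $G=\ZZ_k\rtimes\ZZ_\ell$ by Hall's theorem; the centre may be assumed trivial by \cref{GxHPrime} and induction, giving $G=\ZZ_k\rtimes(\ZZ_m\times\ZZ_2)$ with a faithful action; and then \cref{ModCyclic} is used twice---first to pin $\varphi$ down on a generating set of~$H$ in the colour-preserving case, and then to reach a contradiction from the inverted sign, using that $\Aut(\ZZ_k)$ is abelian and that $x^{-1}\widetilde x$ has odd order. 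Your coarser decomposition $G=H\rtimes\langle\sigma\rangle$ does not by itself supply the cyclic normal subgroup with coprime index and faithful action that this machinery needs, so as written the even case of \pref{OrderOfNonCCA-NotStrong}$\Rightarrow$\pref{OrderOfNonCCA-Divisible} remains unproved.
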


\begin{proof}
($\ref{OrderOfNonCCA-NotCCA} \Rightarrow \ref{OrderOfNonCCA-NotStrong}$) 
Obvious.

($\ref{OrderOfNonCCA-Divisible} \Rightarrow \ref{OrderOfNonCCA-NotCCA}$) 
If $n$ is divisible by~$4$, then there is a generalized dicyclic group of order~$n$, which is not CCA \fullcsee{4NotCCA}{dicyclic}. The nonabelian group of order 21 and the wreath product $\ZZ_p \wr \ZZ_q$ (which is of order $p^q \cdot q$) are not CCA \csee{21NotCCA,WreathProdNotCCA}. Taking an appropriate direct product yields a non-CCA group whose order is any multiple of these \csee{NotCCAxAny}.

($\ref{OrderOfNonCCA-NotStrong} \Rightarrow \ref{OrderOfNonCCA-Divisible}$)
Assume there is a group~$G$ of order~$n$ that is not strongly CCA, but $n$ is not divisible by $4$, $21$, or a number of the form $p^q \cdot q$.
From \cref{OddNotCCAHas}, we see that $n$ is even. 
(Otherwise, $n = |G|$ is divisible by the order of a semi-wreathed product $|A \wr_\alpha \ZZ_k|$. If we let $p$ and~$q$ be prime divisors of $|A|$ and~$k$, respectively, then $|A \wr_\alpha \ZZ_k| = |A|^k \cdot k$ is a multiple of $p^q \cdot q$.)
Furthermore, $n$ must be square-free, for otherwise it is a multiple of either~$4$ or $p^2 \cdot 2$, for some prime~$p$. Therefore, $G$ is a semidirect product $\ZZ_k \rtimes \ZZ_\ell$ \cite[Cor.~9.4.1]{Hall-ThyGrps}.

We may assume the centre of~$G$ is trivial, for otherwise we can write $G$ as a nontrivial direct product, so \cref{GxHPrime} (and induction on~$n$) implies that $G$ is CCA. Therefore, $k$ is odd (so $\ell$ is even), so we may write $G = \ZZ_k \rtimes (\ZZ_m \times \ZZ_2)$, and $\ZZ_m \times \ZZ_2$ acts faithfully on $\ZZ_k$. 
Let $H = \ZZ_k \rtimes \ZZ_m$, so $|H| = km$ is odd, and $H$ is the (unique) subgroup of index~$2$ in~$G$.

Let $\varphi$ be a colour-permuting automorphism of a connected Cayley graph $\Cay(G;S)$. (We wish to show that $\varphi$ is affine.)
There is no harm in assuming that every element of $S$ has prime order \csee{AssumeSPrimePower}. 

\setcounter{case}{0}

\begin{case} \label{OrderOfNonCCAPf-PresCase}
Assume $\varphi$ is colour-preserving.
\end{case}
Fix some $t \in S$ with $|t| = 2$. 
We claim we may assume that $t$~is the only element of order~$2$ in~$S$, and that $H = \langle S \setminus \{t\} \rangle$. 
To see this, let 
	\begin{itemize}
	\item $T$ be the set of all elements of order~$2$ in~$S$, 
	and
	\item $S' = \{t\} \cup \{\, uv \mid u,v \in T, u \neq v \,\} \cup (S \setminus T)$.
	\end{itemize}
It is easy to see that $\varphi$ is a colour-preserving automorphism of the connected Cayley graph $\Cay(G; S')$, and that $G = \langle S \setminus \{t\} \rangle \langle t \rangle$. This establishes the claims.

From \cref{OddNotCCAHas} (and the fact that $|H|$ is odd), we know that $\varphi|_H$ is affine. By composing with a left translation, we may assume that $\varphi$ fixes~$e$. Then $\varphi|_H$ is a group automorphism. 
By composing with an automorphism of $\ZZ_k \rtimes (\ZZ_m \times \ZZ_2)$ of the form $(x,y,z) \mapsto (x^r, y, z)$, we may assume $\varphi|_{\ZZ_k}$ is the identity map. Also, since $\varphi(s) \in \{s^{\pm1}\}$ for every $s \in S$, and $|H/\ZZ_k| = m$ is odd, \cref{ModCyclic} implies that $\varphi$ also fixes every element of $(S \cap H) \setminus \ZZ_k$. Hence, $\varphi|_H$ is an automorphism that fixes every element of a generating set, so $\varphi(h) = h$ for every $h \in H$. Since $\varphi(ht) = \varphi(h) \, t = ht$, for all $h \in H$ (because $\varphi$ is colour-preserving and $t = t^{-1}$), we conclude that $\varphi$ fixes every element of~$G$, and is therefore affine, as desired.

\begin{case}
The general case.
\end{case}
From \cref{OrderOfNonCCAPf-PresCase}, we see that $G$ is CCA, so $\widehat G \normal \A^0$. Hence, $\widehat H \CHAR \A^0$ (since it is the unique largest normal subgroup of odd order), so $\varphi$ normalizes~$\widehat H$. This implies that the restriction of~$\varphi$ to~$H$ is an automorphism of~$H$.

For each $s \in S$, let $\widetilde s = \varphi(s) \in S$. 
To prove that $\varphi$ is affine, it suffices to show $\varphi(xs) = \varphi(x) \, \widetilde s$ for all $x \in G$ and $s \in S$ \fullcsee{CCARems}{Directed}. If this is not the case, then, since $\varphi$ is colour-permuting, there must be some~$x$, such that $\varphi(xs) = \varphi(x) \, \widetilde s^{-1}$ (and $\widetilde s^{-1} \neq \widetilde s$, which means $|s| \neq 2$). This will lead to a contradiction.

Since $|s| \neq 2$, and we have assumed that every element of~$S$ has prime order (by \cref{AssumeSPrimePower}), we see that $s \in H$. Then, since $\varphi|_H$ is an automorphism, but
	$$ \varphi(xs) = \varphi(x) \, \widetilde s^{-1} 
	= \varphi(x) \, \varphi(s)^{-1} \neq \varphi(x) \, \varphi(s) ,$$
we must have $x \notin H$. Since $H$ has only two cosets, and there must be some element of~$S$ that is not in~$H$, this implies that we may assume $x \in S$, after multiplying on the left by an appropriate element of~$H$ (and using the fact that $\varphi$ normalizes~$\widehat H$). Note that, since $x \notin H$, and every element of~$S$ has prime order, this implies $|x|  = 2$. So the order of $\widetilde x$ is also~$2$, which implies $\widetilde x \notin H$ (since $|H|$ is odd).

Since $\varphi$ is colour-permuting, we have
	$$ \varphi(xs) = \varphi( {}^x \! s \, x) = \varphi( {}^x \! s) \widetilde x .$$
Also, by the choice of $x$ and~$s$, we have
	$$ \varphi(xs) = \varphi(x) \, \widetilde s^{-1} = \widetilde x \, \widetilde s^{-1} . $$
Therefore 
	$$ \varphi( {}^x \! s) = {}^{\widetilde x} \widetilde s^{-1} .$$
Since $\ZZ_m$ acts faithfully on~$\ZZ_k$, we have $\alpha(h) \equiv h \pmod{\ZZ_k}$, for every automorphism~$\alpha$ of~$H$ \csee{ModCyclic}. Since $\varphi$ and conjugation by~$x$ are automorphisms of~$H$, this implies $s \equiv s^{-1} \pmod{\ZZ_k}$. Since $|s|$ is odd, we conclude that $s \in \ZZ_k$.

Then, since the automorphism group of a cyclic group is abelian, we have
	$$ \varphi( {}^x \! s) =  {}^x \! \varphi( s) =  {}^x \widetilde s , $$
so $x^{-1} \widetilde x$ must invert $\widetilde s$. But this is impossible, because, as was mentioned above, $x$ and~$\widetilde x$, being of order~$2$, cannot be in~$H$, so they are both in the other coset of~$H$, so $x^{-1} \widetilde x \in H$ has odd order. This contradiction completes the proof that $\varphi$ is affine.
\end{proof}

\begin{rem} \label{OrderOfNonCCADivisiblePNotOdd}
It is not necessary to assume $p$ is odd in the statement of \fullcref{OrderOfNonCCA}{Divisible}, because $2^q \cdot q$ is divisible by~$4$, which is already in the list of divisors.
\end{rem}

\section{Groups of small order} \label{SmallGrpSect}

In this section, we briefly explain which groups of order less than~$32$ are CCA (or strongly CCA). First, note that almost all of the abelian ones are strongly CCA:

\begin{prop}[cf.\ \cref{abelian}]
An abelian group of order less than\/ $32$ is not strongly CCA if and only if it is either
	\begin{itemize}
	\item $\ZZ_2 \times \ZZ_4$ \textup(of order~$8$\textup),
	\item $\ZZ_2 \times \ZZ_2 \times \ZZ_4$ \textup(of order~$16$\textup),
	or
	\item $\ZZ_2 \times \ZZ_3 \times \ZZ_4$ \textup(of order~$24$\textup).
	\end{itemize}
None of these are CCA.
\end{prop}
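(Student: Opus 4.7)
The plan is to apply \cref{abelian} directly and then enumerate. By \cref{abelian}, an abelian group $G$ is not strongly CCA if and only if it is not CCA, and this happens precisely when $G$ has a direct factor isomorphic to either $\ZZ_4 \times \ZZ_2$ or $\ZZ_{2^k} \times \ZZ_2 \times \ZZ_2$ with $k \ge 3$. So the content of the \lcnamecref{abelian} is purely a finite enumeration problem, and the ``none of these are CCA'' assertion is automatic once the first part is proved.

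First I would observe that the second type of forbidden factor, $\ZZ_{2^k} \times \ZZ_2 \times \ZZ_2$ with $k \ge 3$, has order at least $8 \cdot 2 \cdot 2 = 32$, so it cannot occur as a direct factor of a group of order less than~$32$. Therefore the only way for an abelian group $G$ of order less than $32$ to fail to be strongly CCA is that $G \iso (\ZZ_4 \times \ZZ_2) \times B$ for some abelian group~$B$ with $|B| < 4$, i.e.\ $|B| \in \{1,2,3\}$.

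Next I would enumerate the three cases. If $|B| = 1$, we get $\ZZ_2 \times \ZZ_4$ of order~$8$. If $|B| = 3$, then $B \iso \ZZ_3$ and we get $\ZZ_2 \times \ZZ_3 \times \ZZ_4$ of order~$24$. The case $|B| = 2$ needs a word of care: a priori $G$ could be $\ZZ_4 \times \ZZ_2 \times \ZZ_2$ or, in principle, arise as $\ZZ_8 \times \ZZ_2$ or $\ZZ_4 \times \ZZ_4$. However, by the uniqueness of the invariant factor decomposition of a finite abelian group, $\ZZ_8 \times \ZZ_2$ and $\ZZ_4 \times \ZZ_4$ do not split off a $\ZZ_4 \times \ZZ_2$ summand (the remaining factor would have to be~$\ZZ_2$, yielding $\ZZ_4 \times \ZZ_2 \times \ZZ_2$, which is not isomorphic to either of them). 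So the only group arising here is $\ZZ_2 \times \ZZ_2 \times \ZZ_4$ of order~$16$, matching the listed groups exactly.

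Finally, the three enumerated groups indeed fail to be strongly CCA because each contains $\ZZ_4 \times \ZZ_2$ as a direct factor, so \cref{abelian} applies. The same \lcnamecref{abelian} gives the equivalence of CCA and strongly CCA in the abelian setting, so none of the three is CCA either. I expect no real obstacle: the only subtle point is ensuring that we have not missed an abelian group of order $16$ (or $8$, $24$) with $\ZZ_4 \times \ZZ_2$ as a direct summand, which is handled by invoking uniqueness of the cyclic decomposition.
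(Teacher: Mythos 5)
Your proof is correct and matches the paper's intended argument: the paper states this result with only a reference to \cref{abelian}, treating it as exactly the kind of direct application plus finite enumeration you carry out. The observation that the $\ZZ_{2^k}\times\ZZ_2\times\ZZ_2$ ($k\ge 3$) factors are too large, together with the uniqueness of the cyclic decomposition to rule out $\ZZ_8\times\ZZ_2$ and $\ZZ_4\times\ZZ_4$, is all that is needed.
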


Also note that almost all of the groups whose order is not divisible by~$4$ are CCA:

\begin{prop}
The only groups that are not strongly CCA, and whose order is less than\/ $32$ and not divisible by\/~$4$ are:
	\begin{itemize}
	\item the wreath product\/ $\ZZ_3 \wr \ZZ_2$, which is isomorphic to $D_6 \times \ZZ_3$ and has order\/~$18$,
	and
	\item the nonabelian group of order\/~$21$.
	\end{itemize}
Neither of these is CCA.
\end{prop}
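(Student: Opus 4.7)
The plan is to apply \cref{OrderOfNonCCA} to narrow the possible orders, and then enumerate the few remaining groups and dispatch each with previously established results.

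By \cref{OrderOfNonCCA}, an order $n < 32$ that admits a non-strongly-CCA group and is not a multiple of~$4$ must be divisible by~$21$ or by $p^q \cdot q$ for some primes $p,q$ with $p$ odd. Checking small cases shows that the only such value with $p^q \cdot q \le 31$ is $18 = 3^2 \cdot 2$, and the only multiple of~$21$ below~$32$ is~$21$ itself. So only orders~$18$ and~$21$ need to be examined.

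For order~$21$ there are exactly two groups up to isomorphism: the cyclic group $\ZZ_{21}$, which is CCA (hence strongly CCA) by \cref{abelian}, and the nonabelian group, which is not CCA by \cref{21NotCCA}. For order~$18$, a short Sylow analysis---the Sylow $3$-subgroup is normal, isomorphic to $\ZZ_9$ or $\ZZ_3 \times \ZZ_3$, with the $\ZZ_2$ action chosen from the conjugacy classes of involutions in the corresponding automorphism group---yields five isomorphism classes: $\ZZ_{18}$, $\ZZ_3 \times \ZZ_6$, the dihedral group of order~$18$, the generalized dihedral group over $\ZZ_3 \times \ZZ_3$, and $\ZZ_3 \wr \ZZ_2$. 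The identification $\ZZ_3 \wr \ZZ_2 \cong D_6 \times \ZZ_3$ in the statement follows by decomposing $\ZZ_3 \times \ZZ_3$ as the direct sum of the diagonal subgroup (fixed by the swap) and the antidiagonal (inverted by the swap), which exhibits $\ZZ_3 \wr \ZZ_2$ as $\ZZ_3 \times S_3$. The two abelian groups are strongly CCA by \cref{abelian}, since their order is not a multiple of~$8$; the two generalized dihedral groups are strongly CCA by \cref{DihIff}, since neither $\ZZ_9$ nor $\ZZ_3 \times \ZZ_3$ has $\ZZ_2$ as a direct factor; and $\ZZ_3 \wr \ZZ_2$ is not CCA by \cref{WreathProdNotCCA} with $m=3$ and $n=2$.

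Nothing in the argument is hard in isolation; the main obstacle is simply organizing the case analysis for order~$18$ and verifying the identification $\ZZ_3 \wr \ZZ_2 \cong D_6 \times \ZZ_3$ that connects the wreath-product witness of \cref{WreathProdNotCCA} to the direct-product form in the statement.
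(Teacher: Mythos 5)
Your argument is correct, and there is no circularity: \cref{OrderOfNonCCA} is established in the section on groups of odd order, which precedes this proposition, so you may invoke it. But your route differs from the paper's. You use \cref{OrderOfNonCCA} to cut the possible orders down to $18$ and $21$, and then classify the groups of those two orders by hand (five groups of order $18$, two of order $21$), dispatching each with \cref{abelian}, \cref{DihIff}, \cref{WreathProdNotCCA}, and \cref{21NotCCA}; your verification of $\ZZ_3 \wr \ZZ_2 \iso D_6 \times \ZZ_3$ via the diagonal/antidiagonal splitting is a detail the paper leaves implicit. The paper instead never invokes \cref{OrderOfNonCCA} here: for odd orders it appeals directly to \cref{OddNotCCAHas} together with \cref{21NotCCA}, and for even orders $2m$ with $m$ odd it observes that, apart from the abelian groups and the two order-$30$ products $D_6 \times \ZZ_5$ and $D_{10} \times \ZZ_3$ (handled by \cref{GxHPrime}), the remaining groups other than $\ZZ_3 \wr \ZZ_2$ are generalized dihedral and hence strongly CCA by \cref{DihIff}. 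The trade-off: your version is shorter to state but leans on the full even-order analysis buried in the proof of \cref{OrderOfNonCCA}, while the paper's version redoes the small even orders with lighter, more transparent tools and makes visible exactly which structural result covers which group; on the other hand, your explicit enumeration of the order-$18$ groups and the isomorphism $\ZZ_3 \wr \ZZ_2 \iso \ZZ_3 \times S_3$ supplies checking that the paper's terse proof leaves to the reader.
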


\begin{proof}
For the groups of odd order, the conclusion is immediate from \cref{OddNotCCAHas} and \cref{21NotCCA} (see \cref{OddLess100} for a stronger result). \Cref{GxHPrime} deals with the groups $D_6 \times \ZZ_5$ and $D_{10} \times \ZZ_3$ of order~$30$. For all of the other groups of even order, it suffices to note that if $m$ is odd, then every generalized dihedral group of order~$2m$ is strongly CCA \csee{DihIff}.
\end{proof}

So it is surprising that very few of the remaining groups are strongly CCA:

\begin{prop} \label{StrongBy4}
The only nonabelian groups that are strongly CCA and whose order is less than\/ $32$ and  divisible by\/~$4$ are:
	\begin{itemize}
	\item the dihedral groups of order\/~$8$, $16$, and\/~$24$,
	\item the alternating group $A_4$, which is of order~$12$,
	\item another group of order 16, namely, the semidirect product 
		$$\ZZ_8 \rtimes \ZZ_2 = \langle\, x, a \mid x^8 = a^2 = e, \ a^{-1} x a = x^5\,\rangle ,$$
	and
	\item three additional groups groups of order 24, namely, $D_8 \times \ZZ_3$, $A_4 \times \ZZ_2$, and the semidirect product\/ $\ZZ_3 \rtimes \ZZ_8$ in which $\ZZ_8$ inverts~$\ZZ_3$.
	\end{itemize}
Furthermore, the only groups of order less than\/ $32$ that are CCA, but not strongly CCA, are:
	\begin{itemize}
	\item the dihedral groups $D_{12}$, $D_{20}$, and $D_{28}$,
	and
	\item the group $D_{12} \times \ZZ_2$, which is a generalized dihedral group of order 24.
	\end{itemize}
\end{prop}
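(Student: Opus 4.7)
The plan is to enumerate all nonabelian groups of each order $n \in \{8, 12, 16, 20, 24, 28\}$ (the orders less than $32$ that are divisible by $4$) and classify each one by appealing to the structural results already established, reserving individual Cayley-graph analysis for the handful of groups not covered by general theory. For each candidate $G$ the task is to decide whether every colour-preserving (or colour-permuting) automorphism fixing the identity vertex is a group automorphism \csee{CCAFixId}, and by \cref{AssumeSPrimePower} it suffices to consider generating sets in which every element has prime-power order.

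A large proportion of the list is disposed of without any individual analysis. Every generalized dicyclic group except~$\ZZ_4$ and every semidihedral group is non-CCA \csee{4NotCCA}, which handles $Q_8$, $Q_{12}$, $Q_{16}$, $SD_{16}$, $Q_{20}$, $Q_{24}$, and $Q_{28}$; and any direct product having one of these (or a non-CCA abelian group from \cref{abelian}) as a factor is non-CCA by \cref{NotCCAxAny}, for instance $Q_8 \times \ZZ_2$, $Q_8 \times \ZZ_3$, and $D_8 \times \ZZ_2$ (which is generalized dihedral over $\ZZ_4 \times \ZZ_2$). The dihedral groups are settled by \cref{DihedralCCA,DihIff}: $D_8, D_{16}, D_{24}$ are strongly CCA, while $D_{12}, D_{20}, D_{28}$, and $D_{12} \times \ZZ_2$ (viewed as the generalized dihedral group over $\ZZ_6 \times \ZZ_2$) are CCA but not strongly CCA. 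When $\gcd(|H_1|, |H_2|) = 1$ the strongly CCA property of $H_1 \times H_2$ is decided by \cref{GxHPrime}, which handles $D_8 \times \ZZ_3$ and several other cross-order products.

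The residual list of groups requiring direct verification is short: principally $A_4$ of order~$12$; the modular group $\ZZ_8 \rtimes \ZZ_2$, the central product of order~$16$, and $\ZZ_4 \rtimes \ZZ_4$; the Frobenius group $\ZZ_5 \rtimes \ZZ_4$ of order~$20$; and, at order~$24$, the groups $A_4 \times \ZZ_2$, $S_4$, $\mathrm{SL}(2,3)$, $\ZZ_3 \rtimes \ZZ_8$, and $\ZZ_3 \rtimes Q_8$. For each group claimed to be strongly CCA in the statement, we must show that no connected Cayley graph on it admits a colour-permuting non-affine automorphism; for each remaining group we must exhibit a connected Cayley graph that does.

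The main obstacle is the order-$24$ case check: there are many nonabelian groups of that order, and each may admit several generating sets of prime-power-order elements that need to be considered. In practice this amounts to a finite computation of bounded depth, carried out either by hand (using the small diameter of each Cayley graph in question) or by a short computer-algebra verification. The proposition then records which groups survive as strongly CCA and which survive only as CCA but not strongly CCA.
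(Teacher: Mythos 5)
Your overall strategy is the same as the paper's: use the structural results (generalized dicyclic and semidihedral groups via \cref{4NotCCA}, generalized dihedral groups via \cref{GenDihedralCCA,DihIff}, direct products via \cref{NotCCAxAny,GxHPrime}) to eliminate most groups, and then treat the handful that remain case by case. However, there are two genuine gaps. First, your residual list is not the correct one. At order $16$ you list the central product and $\ZZ_4 \rtimes \ZZ_4$, but $\ZZ_4 \rtimes \ZZ_4$ is itself generalized dicyclic (it is $\mathrm{Dic}(\ZZ_4 \times \ZZ_2, y)$ with $y$ the non-square involution), so it is already disposed of, while you omit the group $(\ZZ_4 \times \ZZ_2) \rtimes \ZZ_2$ (GAP Id $[16,3]$), which is neither dihedral, dicyclic, nor semidihedral and genuinely requires an individual argument. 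At order $24$ you omit $S_3 \times \ZZ_4$ (not covered by \cref{GxHPrime}, since $\gcd(6,4) \neq 1$) and $\ZZ_3 \rtimes D_8$, both of which must be shown non-CCA individually; and your entry ``$\ZZ_3 \rtimes Q_8$'' is the dicyclic group $Q_{24}$, already excluded. Since the omitted groups are exactly ones that are \emph{not} CCA, an enumeration that skips them does not establish the stated classification.

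Second, and more importantly, the case analysis itself is not carried out: you reduce the problem to ``a finite computation of bounded depth'' without exhibiting anything. The paper's proof of the negative cases is concrete: for each remaining non-CCA group it applies \cref{notCCA-structure} with an explicit triple $(S, T, \tau)$ — for example, for $S_4$ one takes $a = (1,2,3,4)$, $b = (1,2,4,3)$, $S = \{a^{\pm1}, b^{\pm1}\}$, $T = \{a^{\pm1}\}$, $\tau = a^2$ — and these certificates are the substance of the argument; your proposal supplies none of them. For the positive cases ($A_4$, $\ZZ_8 \rtimes \ZZ_2$, $\ZZ_3 \rtimes \ZZ_8$, $A_4 \times \ZZ_2$, $D_8 \times \ZZ_3$) the paper likewise either invokes \cref{DihIff,GxHPrime} or defers a detailed verification, so your deferral there is comparable, but as written your argument establishes neither the non-CCA half nor the strongly-CCA half of the proposition.
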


\begin{proof}[Sketch of proof] 
The result can be verified by an exhaustive computer search, but we summarize a case-by-case analysis that can be carried out by hand, using the classification of groups of order less than 32.
Each group of such small order can be specified by its ``GAP Id\rlap,'' which is an ordered pair $[n,k]$, where $n$ is the order of the group, and $k$~is the id~number that has been assigned to that particular group (see \cite{Wiki-GrpsOfPartOrder}, for example).

Assume $G$ is nonabelian, $|G| < 32$, and $|G|$ is divisible by~$4$. 
We may assume that $G$ is neither generalized dicyclic, semidihedral, nor generalized dihedral, for otherwise \cref{4NotCCA}\textup(\ref{4NotCCA-dicyclic},\ref{4NotCCA-semidihedral}\textup) and \cref{GenDihedralCCA,DihIff} determine whether $G$ is CCA or strongly CCA.
\SeeAppendix{EasyOmit} 
By inspection of the list of groups of each order, we see that this leaves only thirteen possibilities for~$G$, and we consider each of these GAP Ids separately.
In most cases, \cref{notCCA-structure} implies that $G$ is not CCA.

	\begin{itemize}
	
	\item[{$[12,3]$}] $= A_4$. This group is strongly CCA, but we omit the proof.
	\SeeAppendix{IsStrong} 
	
	\item[{$[16,3]$}] $=\langle\, a,b,c \mid a^4=b^2=c^2=e, ab=ba, bc=cb, cac=ab\,\rangle$. 
	\Cref{notCCA-structure} applies with $S = \{a^{\pm1}, c\}$, $T = \{a^{\pm1}\}$, and $\tau = a^2 \in Z(G)$.

	\item[{$[16,6]$}] $=\langle\, a,x \mid a^8=x^2=e, x a x = a^5 \,\rangle
	 = \langle a \rangle \rtimes \langle x \rangle = \ZZ_8 \rtimes \ZZ_2$. 
	This group is strongly CCA, but we omit the proof.
	\SeeAppendix{IsStrong} 

	\item[{$[16,13]$}] $= \langle\, a,x,y \mid a^4=x^2=e, a^2=y^2, xax=a^{-1}, ay=ya, xy=yx \,\rangle$. \Cref{notCCA-structure} applies with $S = \{a^{\pm1}, x, y^{\pm1}\}$, $T = \{a^{\pm1}, y^{\pm1}\}$, and $\tau = a^2 \in Z(G)$.
	
	\item[{$[20,3]$}] $= \langle\, a,b \mid a^5=b^4=e, \ ba b^{-1}=a^2 \,\rangle$. \Cref{notCCA-structure} applies with $S=\{a^{\pm1},b^{\pm1}\}$, $T = \{b^{\pm1}\}$, and $\tau = b^2$ (which inverts~$a$).
	
	\item[{$[24,1]$}] $= \ZZ_3 \rtimes \ZZ_8$, where $\ZZ_8$ inverts~$\ZZ_3$.
	This group is strongly CCA, but we omit the proof.%
	\SeeAppendix{IsStrong} 
	
	\item[{$[24,3]$}] $= \mathrm{SL}(2,3) \iso Q_8 \rtimes \ZZ_3  
	= \langle i,j \rangle \rtimes \langle a \rangle$, where $a i a^{-1} = j$ and $a^{-1} i a = ij $. 
	\Cref{notCCA-structure} applies with $S=\{ i^{\pm1}, a^{\pm1}\}$, $T = \{i^{\pm1}\}$, and $\tau = i^2 \in Z(G)$.
	
	\item[{$[24,5]$}] $= S_3 \times \ZZ_4$. \Cref{notCCA-structure} applies with 
	$T =  \{(1,2)\} \times \{\pm1\}$, $S = \bigl\{ \bigl( (2,3), 0 \bigr) \bigr\} \cup T$, and $\tau = (e,2) \in Z(G)$.
	
	\item[{$[24,8]$}] $= \ZZ_3 \rtimes D_8 = \langle\, a,b,c \mid a^3 = b^4 = c^2 = e, \ bab^{-1} = a^{-1}, \ ac = ca, \, cbc^{-1} = b^{-1} \,\rangle$.  \Cref{notCCA-structure} applies with $S = \{(ab)^{\pm1}, b^{\pm1}, c\}$, $T = \{(ab)^{\pm1}, b^{\pm1}\}$, and $\tau = b^2 \in Z(G)$.

	\item[{$[24,10]$}] $= D_8 \times \ZZ_3$. Since $D_8$ is strongly CCA \csee{DihIff}, the same is true for this group \csee{GxHPrime}.

	\item[{$[24,11]$}] $= Q_8 \times \ZZ_3$. This is not CCA, since $Q_8$ is not CCA \csee{Z4xZ2andQ8,NotCCAxAny}.
	
	\item[{$[24,12]$}] $= S_4$.
	Let $a = (1,2,3,4)$ and $b = (1,2,4,3)$, so
	\cref{notCCA-structure} applies, with $S = \{a^{\pm1},b^{\pm1}\}$, $T = \{a^{\pm1}\}$, and $\tau = a^2 = (1,3)(2,4)$, which inverts~$b$.
	
	\item[{$[24,13]$}] $= A_4 \times \ZZ_2$. 
	\SeeAppendix{IsStrong} 
	This group is strongly CCA, but we omit the proof. 
	\qedhere

	\end{itemize}
\end{proof}

The above results assume $|G| < 32$, but it is not difficult to treat considerably larger groups if we assume the order is odd:

\begin{prop} \label{OddLess100}
Let $G_{21} = \ZZ_7 \rtimes \ZZ_3$ be the\/ \textup(unique\textup) nonabelian group of order\/~$21$. Then the only groups of odd order less than\/ $100$ that are not strongly CCA are 
	$G_{21}$, 
	$G_{21} \times \ZZ_3$,
	and\/ $\ZZ_3 \wr \ZZ_3$.
\end{prop}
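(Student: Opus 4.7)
My plan is to combine \cref{OddCCA->strong} and \cref{OddNotCCAHas}. By \cref{OddCCA->strong}, a group of odd order is strongly CCA if and only if it is CCA, so it suffices to classify the non-CCA groups of odd order less than~$100$. By \cref{OddNotCCAHas}, each such group has a section isomorphic to either~$G_{21}$ or to a semi-wreathed product $A \wr_\alpha \ZZ_n$ with $A$ a nontrivial elementary abelian group of odd order and $n > 1$, so I need only enumerate the odd-order groups of order at most~$99$ admitting such a section.

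I would first bound the semi-wreathed products. Since $|A \wr_\alpha \ZZ_n| = |A|^n \cdot n|\alpha|$ must be odd, both $n$ and $|\alpha|$ are odd; combined with $|A| \geq 3$ and $n \geq 2$ (hence $n \geq 3$), a short case analysis (checking $|A| \in \{3, 5, 7, 9\}$) shows that the only semi-wreathed product of order at most~$99$ is $\ZZ_3 \wr \ZZ_3$ itself, of order~$81$. Since $81 > 99/2$, any group of order at most~$99$ admitting such a section has order exactly~$81$ and is isomorphic to $\ZZ_3 \wr \ZZ_3$.

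Next, for the $G_{21}$ case, I would inspect the proof of \cref{OddNotCCAHas}, which reveals that conclusion~\pref{OddNotCCAHas-21} arises with $G_{21}$ as an actual \emph{subgroup} of~$G$ (the $2$-complement of a $\PSL(2,7)$ subgroup of~$\A^0$). So I need only consider odd-order groups of order at most~$99$ containing $G_{21}$ as a subgroup; the order is then $21$ or~$63$. For order~$21$ the only nonabelian group is $G_{21}$ itself. For order~$63$ the Sylow $7$-subgroup is normal, so the group is $\ZZ_7 \rtimes P$ with $|P|=9$; enumerating homomorphisms $P \to \Aut(\ZZ_7) \cong \ZZ_6$ for $P \in \{\ZZ_9, \ZZ_3 \times \ZZ_3\}$ yields exactly four groups: $\ZZ_{63}$, $\ZZ_{21} \times \ZZ_3$, $G_{21} \times \ZZ_3$, and $\ZZ_7 \rtimes \ZZ_9$ (where a generator of~$\ZZ_9$ acts on~$\ZZ_7$ as an element of order~$3$). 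The first two are abelian and hence CCA by \cref{abelian}. The fourth has $\langle a^3 \rangle$ (its centre, with $a$ a generator of~$\ZZ_9$) as its unique subgroup of order~$3$, so any subgroup of order~$21$ equals the abelian $\ZZ_7 \times \langle a^3 \rangle$; thus $\ZZ_7 \rtimes \ZZ_9$ does not contain $G_{21}$ and is therefore CCA. Only $G_{21} \times \ZZ_3$ remains.

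Finally, I would confirm that the three surviving candidates $G_{21}$, $G_{21} \times \ZZ_3$, and $\ZZ_3 \wr \ZZ_3$ are all non-CCA, by \cref{21NotCCA}, \cref{NotCCAxAny}, and \cref{WreathProdNotCCA}, respectively. The main obstacle I anticipate is the ``section versus subgroup'' distinction in \fullcref{OddNotCCAHas}{21}: without extracting the stronger ``subgroup'' conclusion from the proof of \cref{OddNotCCAHas}, the group $\ZZ_7 \rtimes \ZZ_9$ would also survive as a candidate, since $G_{21}$ is a quotient of it.
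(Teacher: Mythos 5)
Most of your outline coincides with the paper's proof: using \cref{OddCCA->strong} (or \cref{OrderOfNonCCA}) to reduce to the CCA question, confining the possible orders to $21$, $63$ and $81$, disposing of the semi-wreathed case by the order count $|A|^n\cdot n|\alpha|\ge 81$, and citing \cref{21NotCCA}, \cref{NotCCAxAny} and \cref{WreathProdNotCCA} for the three examples. The gap is exactly at the point you flag as the main obstacle: your claim that the proof of \cref{OddNotCCAHas} actually produces $G_{21}$ as a \emph{subgroup} of~$G$ is not correct. The proof of that theorem is an induction in which the group is replaced by quotients: in \cref{Odd-NAbel} one passes to $\Cay(G/N;\overline S)$ with the automorphism group $\A/\N^+$, and the inductive hypothesis is invoked for the triple built from $G/N$. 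Hence when \cref{Odd-NSimple} is reached in some recursive application, the $2$-complement $\widehat G\cap\L_1\iso G_{21}$ is a subgroup of an iterated \emph{quotient} of the original group, i.e.\ only a section of it. Nor can you rerun the induction with the strengthened hypothesis ``no subgroup isomorphic to $G_{21}$'', because that hypothesis is not inherited by quotients; the group $\ZZ_7\rtimes\ZZ_9$ is precisely the counterexample, since it has no subgroup isomorphic to $G_{21}$ but its quotient by its centre is $G_{21}$.

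Consequently your argument does not eliminate $\ZZ_7\rtimes\ZZ_9$: after the reduction, \cref{OddNotCCAHas} leaves both nonabelian groups of order $63$ as candidates, and some independent argument is required to show that $\ZZ_7\rtimes\ZZ_9$ is (strongly) CCA. This is exactly how the paper proceeds: its proof of \cref{OddLess100} states that $\ZZ_7\rtimes\ZZ_9$ is strongly CCA ``but we omit the proof'', deferring the verification to the appendix rather than deducing it from \cref{OddNotCCAHas}. So your proposal is complete except for this one group, and for it you must supply a direct proof (by hand or by machine) that every colour-permuting automorphism of a connected Cayley graph on $\ZZ_7\rtimes\ZZ_9$ fixing $e$ is a group automorphism.
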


\begin{proof}
Suppose $G$ is a group of odd order, such that $G$ is not strongly CCA and $|G| < 100$. From \cref{OrderOfNonCCA}, we see that $|G|$ is divisible by either $21$ or $3^3 \cdot 3 = 81$. Since $|G| < 100$, this implies that $|G|$ is either $21$, $21 \times 3 = 63$, or $3^3 \cdot 3 = 81$. Also, $G$ must be nonabelian \csee{OrderOfNotCCAAbel}.
	\begin{itemize}
	\item The nonabelian group $G_{21}$ of order~$21$ is not CCA \csee{21NotCCA}.
	\item There are two nonabelian groups  of order $63$. One of them, the direct product $G_{21} \times \ZZ_3$, is not CCA \csee{NotCCAxAny}. The other is
$$ \ZZ_7 \rtimes \ZZ_9 = \langle\, x, a \mid x^7 = a^9 = e, \ a^{-1} x a = x^2 \,\rangle .$$
	This group is strongly CCA, but we omit the proof.
	\SeeAppendix{IsStrong} 

	\item \Cref{OddNotCCAHas} implies that $\ZZ_3 \wr \ZZ_3$ is the only non-CCA group of order~$81$ (see also \cref{WreathProdNotCCA}). \qedhere
	\end{itemize}
\end{proof}


\newpage 
\thispagestyle{empty}
\cleardoublepage
\thispagestyle{empty}

\makeatletter
\def\@oddrunninghead{Appendix:  Notes to aid the referee}
\makeatother

\begin{appendix}

\begin{center}
\large\bf
\hypertarget{AppendixForRef}{APPENDIX: Notes to aid the referee} 
\end{center}

\section{Dicyclic (and other) groups omitted from the proof of \cref{StrongBy4}}
\label{EasyOmit}
In order to keep the proof of \cref{StrongBy4} short, it has no discussion of groups that are generalized dicyclic, semidihedral, or generalized dihedral.
Here are additional details to show that no cases were missed.

\begin{itemize} \itemsep=-1pt

\item[\bf order 4:] \url{http://groupprops.subwiki.org/wiki/Groups_of_order_4}
	\item[] There are no nonabelian groups of order~$4$.
	
	\medbreak

\item[\bf order 8:] \url{http://groupprops.subwiki.org/wiki/Groups_of_order_8}
	\item[{$[8,1]$}] $= \ZZ_8$ is abelian
	\item[{$[8,2]$}] $= \ZZ_4 \times \ZZ_2$ is abelian
	\item[{$[8,3]$}] $= D_8$ is dihedral (and is \textbf{strongly CCA}, since $\ZZ_4$ does not have $\ZZ_2$ as a direct factor)
	\item[{$[8,4]$}] $= Q_8$ is dicyclic (so it is not CCA)
	\item[{$[8,5]$}] $= \ZZ_2 \times \ZZ_2 \times \ZZ_2$ is abelian
	
	\medbreak
	
\item[\bf order 12:] \url{http://groupprops.subwiki.org/wiki/Groups_of_order_12}
	\item[{$[12,1]$}] $= Q_{12} = \mathrm{Dic}\bigl( 3, \ZZ_6 \bigr)$ is dicyclic (so it is not CCA)
	\item[{$[12,2]$}] $= \ZZ_{12}$ is abelian
	\item[{$[12,3]$}] $= A_4$ is discussed in the proof of \cref{StrongBy4}. It is \textbf{strongly CCA}.
	\item[{$[12,4]$}] $= D_{12}$ is dihedral (and is \textbf{CCA but not strongly CCA}, since $\ZZ_6$ has $\ZZ_2$ as a direct factor)
	\item[{$[12,5]$}] $= \ZZ_6 \times \ZZ_2$ is abelian
	
	\medbreak

\item[\bf order 16:] \url{http://groupprops.subwiki.org/wiki/Groups_of_order_16}
	\item[{$[16,1]$}] $= \ZZ_{16}$ is abelian
	\item[{$[16,2]$}] $= \ZZ_4 \times \ZZ_4$ is abelian
	\item[{$[16,3]$}]  is discussed in the proof of \cref{StrongBy4}. It is not CCA.
	\item[{$[16,4]$}]  $= \ZZ_4 \rtimes \ZZ_4 = \mathrm{Dic}\bigl( (0,1), \ZZ_4 \times \ZZ_2 \bigr)$ is generalized dicyclic (so it is not CCA).
	\item[{$[16,5]$}] $=\ZZ_8 \times \ZZ_2$ is abelian
	\item[{$[16,6]$}]  is discussed in the proof of \cref{StrongBy4}. It is \textbf{strongly CCA}.
	\item[{$[16,7]$}] $= D_{16}$ is dihedral (and is \textbf{strongly CCA}, since $\ZZ_8$ does not have $\ZZ_2$ as a direct factor)
	\item[{$[16,8]$}] is semidihedral (so it is not CCA).
	\item[{$[16,9]$}] $= Q_{16} = \mathrm{Dic}\bigl( 4, \ZZ_8 \bigr)$ is dicyclic (so it is not CCA)
	\item[{$[16,10]$}] $=\ZZ_4 \times \ZZ_2 \times \ZZ_2$ is abelian
	\item[{$[16,11]$}] $= D_8 \times \ZZ_2$ is generalized dihedral over $\ZZ_4 \times \ZZ_2$ (and it is not CCA, since $\ZZ_4 \times \ZZ_2$ is not CCA).
	\item[{$[16,12]$}] $= Q_8 \times \ZZ_2 = \mathrm{Dic}\bigl( (2,0), \ZZ_4 \times \ZZ_2 \bigr)$ is generalized dicyclic (so it is not CCA).
	\item[{$[16,13]$}] is discussed in the proof of \cref{StrongBy4}. It is not CCA.
	\item[{$[16,14]$}] $=\ZZ_2 \times \ZZ_2 \times \ZZ_2 \times \ZZ_2$ is abelian

	\medbreak

\item[\bf order 20:] \url{http://groupprops.subwiki.org/wiki/Groups_of_order_20}
	\item[{$[20,1]$}] $= Q_{20}  = \mathrm{Dic}\bigl( 5, \ZZ_{10} \bigr)$ is dicyclic (so it is not CCA)
	\item[{$[20,2]$}] $= \ZZ_{20}$ is abelian
	\item[{$[20,3]$}] is discussed in the proof of \cref{StrongBy4}. It is not CCA.
	\item[{$[20,4]$}] $= D_{20}$ is dihedral (and is \textbf{CCA but not strongly CCA}, since $\ZZ_{10}$ has $\ZZ_2$ as a direct factor)
	\item[{$[20,5]$}] $= \ZZ_{10} \times \ZZ_2$ is abelian
	
	\medbreak
	
\item[\bf order 24:] \url{http://groupprops.subwiki.org/wiki/Groups_of_order_24}
	\item[{$[24,1]$}]  is discussed in the proof of \cref{StrongBy4}. It is \textbf{strongly CCA}.
	\item[{$[24,2]$}] $= \ZZ_{24}$ is abelian
	\item[{$[24,3]$}] is discussed in the proof of \cref{StrongBy4}. It is not CCA.
	\item[{$[24,4]$}] $= Q_{24}  = \mathrm{Dic}\bigl( 6, \ZZ_{12} \bigr)$ is dicyclic (so it is not CCA).
	\item[{$[24,5]$}]  is discussed in the proof of \cref{StrongBy4}. It is not CCA.
	\item[{$[24,6]$}] $= D_{24}$ is dihedral (and is \textbf{strongly CCA}, since $\ZZ_{12}$ does not have $\ZZ_2$ as a direct factor)
	\item[{$[24,7]$}] $= Q_{12} \times \ZZ_2 = \mathrm{Dic}\bigl( (3,0), \ZZ_6 \times \ZZ_2 \bigr)$ is generalized dicyclic (so it is not CCA).
	\item[{$[24,8]$}] is discussed in the proof of \cref{StrongBy4}. It is not CCA.
	\item[{$[24,9]$}] $= \ZZ_6 \times \ZZ_4$ is abelian
	\item[{$[24,10]$}] is discussed in the proof of \cref{StrongBy4}. It is \textbf{strongly CCA}.
	\item[{$[24,11]$}]  is discussed in the proof of \cref{StrongBy4}. It is not CCA.
	\item[{$[24,12]$}]  is discussed in the proof of \cref{StrongBy4}. It is not CCA.
	\item[{$[24,13]$}] is discussed in the proof of \cref{StrongBy4}. It is \textbf{strongly CCA}.
	\item[{$[24,14]$}] $= D_{12} \times \ZZ_2$ is generalized dihedral over $\ZZ_6 \times \ZZ_2$ (and is \textbf{CCA but not strongly CCA}, since $\ZZ_6 \times \ZZ_2$ obviously has $\ZZ_2$ as a direct factor)
	\item[{$[24,15]$}] $=\ZZ_3 \times \ZZ_2 \times \ZZ_2 \times \ZZ_2$ is abelian
	
	\medbreak

\item[\bf order 28:] There does not seem to be a web page for the groups of this order, but it is easy to classify the nonabelian ones, by noting that Sylow's Theorem implies the Sylow $7$-subgroup is normal. This implies $G = \ZZ_7 \rtimes H$, where $|H| = 4$ (so $H$ is either $\ZZ_4$ or $\ZZ_2 \times \ZZ_2$).

If $H = \ZZ_4$, then the only nonabelian semidirect product is the dicyclic group $Q_{28} = \mathrm{Dic}\bigl( 7, \ZZ_{14} \bigr)$ (which is not CCA).

If $H = \ZZ_2 \times \ZZ_2$, then the only nonabelian semidirect product is the dihedral group $D_{28}$. (It is \textbf{CCA, but not strongly CCA}, since $\ZZ_{14}$ has $\ZZ_2$ as a direct factor.)

\end{itemize}

\newpage
\section{Strongly CCA groups omitted from the proof of \cref{StrongBy4}} \label{IsStrong}
This appendix proves that certain small groups (listed in \cref{SmallStrong,63isCCA}) are strongly CCA. These verifications were omitted from our proofs of \cref{StrongBy4,OddLess100}.
The following simple observation will play a key role.

\begin{lem} \label{WordMakesAut}
Let
	\begin{itemize}
	\item $\varphi$ be a colour-permuting automorphism of a Cayley graph $\Cay(G;S)$, such that $\varphi(e) = e$,
	\item $\widetilde a = \varphi(a)$ and $\widetilde b = \varphi(b)$, for some $a,b \in S$,
	\item $\tau(v) \in \{\pm1\}$, such that $\varphi(va) = \varphi(v) \, \widetilde a^{\tau(v)}$, for all $v \in G$,
	and
	\item $k_1,k_2,\ldots,k_{2r} \in \ZZ \smallsetminus \{0\}$, such that
		$a^{k_1} b^{k_2} a^{k_3} \cdots b^{k_{2r}} = e$ \textup(and $r \ge 2$\textup).
	\end{itemize}
If $\epsilon_1 = \epsilon_3$ and $\epsilon_2 = \epsilon_4$, for all $\epsilon_1,\ldots,\epsilon_{2r} \in \{\pm1\}$, such that
	$\widetilde a^{\epsilon_1 k_1} \widetilde b^{\epsilon_2 k_2} \cdots \widetilde b^{\epsilon_{2r}k_{2r}} = e$,
then $\varphi(v a) = \varphi(v)\, \widetilde a$ and $\varphi(v b) = \varphi(v)\, \widetilde b$, for all $v \in \langle a^{k_3} , b^{k_2} \rangle$.
\end{lem}

\begin{proof}
Since $\varphi$ is colour-permuting, there exist $\sigma,\tau \colon G \to \{\pm1\}$, such that 
	$$ \text{$\varphi(va) = \varphi(v) \, \widetilde a^{\sigma(v)}$
	and
	$\varphi(va) = \varphi(v) \, \widetilde b^{\tau(v)}$
	 for all $v \in G$} .$$
We wish to show $\sigma(v) = \tau(v) = 1$ for all $v \in \langle a^{k_3} , b^{k_2} \rangle$. Since $\sigma(e) = \tau(e) = 1$, it suffices to show that $\sigma(v b^{k_2}) = \tau(v a^{k_3}) = \tau(v)$ for all $v \in G$.

The two parts of the proof are very similar, so we show only that $\sigma(v b^{k_2}) = \sigma(v)$. The relation $a^{k_1} b^{k_2} a^{k_3} \cdots b^{k_{2r}} = e$ represents a closed walk starting at~$v$ (or at any other desired vertex). Applying $\varphi$ yields a closed walk starting at~$\varphi(v)$. Since $\varphi$ is colour-permuting, this closed walk corresponds to a relation of the form $\widetilde a^{\epsilon_1 k_1} \widetilde b^{\epsilon_2 k_2} \cdots \widetilde b^{\epsilon_{2r}k_{2r}} = e$, with $\epsilon_i \in \{\pm1\}$. By assumption, we must have $\epsilon_1 = \epsilon_3$. Therefore 
	$$\sigma(a^{k_1} b^{k_2}) = \epsilon_3 = \epsilon_1 = \sigma(v) .$$ 
This establishes the desired conclusion, since $\sigma(v) = \sigma(v a^{k_1})$, and $v a^{k_1}$ is an arbitrary element of~$G$.
\end{proof}

\begin{eg} \label{SmallStrong}
The groups $[12,3]$, $[16,6]$, $[24,1]$, and $[24,13]$ from the proof of \cref{StrongBy4} are strongly CCA.
\end{eg}

\begin{proof}
We consider each of the four groups individually; for convenience, let $G$ be the group under consideration. Suppose $\varphi$ is a colour-permuting automorphism of a connected Cayley graph $\Cay(G;S)$, such that $\varphi(e) = e$, and let $\widetilde s = \varphi(s)$, for each $s \in S$.
We wish to show $\varphi \in \Aut G$.

\begin{case*}
Assume $G = [12,3]$.
\end{case*}
 Let $a \in S$ with $|a| = 3$, and let $N$ be the (unique) subgroup of order~$4$ in~$G$.

Assume, for the moment, that there exists $b \in S \cap N$ (so $|b| = 2$). Then $(ab)^3 = e$.
Suppose $i,j,k \in \{\pm1\}$, with
	$$ e = \widetilde a^i \, \widetilde b \, \widetilde a^j \, \widetilde b \, \widetilde a^k \, \widetilde b
	\equiv \widetilde a^{i + j + k} \pmod{N} ,$$
so $i + j + k \equiv 0 \pmod{3}$. Since $i,j,k \in \{\pm1\}$, this implies $i = j = k$. 
We conclude from \cref{WordMakesAut} that $\varphi(v s) = \varphi(v) \, \widetilde s$, for all $v \in \langle a, b \rangle = G$ and $s \in \{a^{\pm1}, b \}$, so $\varphi \in \Aut G$.

We may now assume $|s| = 3$ for all $s \in S$. Let $b \in S \smallsetminus \langle a \rangle$. We may assume $a \equiv b \pmod{N}$, by replacing $b$ with its inverse if necessary. Write $\widetilde b = \widetilde a^r x$, with $r \in \{\pm1\}$ and $x \in N$. Note that $(a^{-1} b)^2 = e$.
Suppose $i,j,k,\ell \in \{\pm1\}$, with
	$$ e = \widetilde a^{-i} \, \widetilde b^j \, \widetilde a^{-k} \, \widetilde b^\ell
	= \widetilde a^{-i+rj-k+r\ell} \cdot 
	\begin{cases}
	(\widetilde a^{k - r\ell}x \widetilde a^{-k+ r\ell}) x
		& \text{if $j = \ell = 1$} , \\
	(\widetilde a^k x \widetilde a^{-k}) x
		& \text{if $j = -1$ and $\ell = 1$} , \\
	\widetilde a^{-r\ell} \, (\widetilde a^k x \widetilde a^{-k}) x \, \widetilde a^{r\ell}
		& \text{if $j = 1$ and $\ell = -1$} , \\
	\widetilde a^{-r\ell} \, (\widetilde a^{k - rj}x \widetilde a^{-k + rj}) x \, \widetilde a^{r\ell}
		& \text{if $j = \ell = -1$}
	. \end{cases} $$
Since the component in~$N$ must be trivial, and no nontrivial power of~$\widetilde a$ centralizes~$x$, we see that we must have $j = \ell$ and $k = rj = r\ell$. Then, since the exponent of~$\widetilde a$ must be~$0$, this implies $i = k$.
We conclude from \cref{WordMakesAut} that $\varphi(v s) = \varphi(v) \, \widetilde s$, for all $v \in \langle a, b \rangle = G$ and $s \in \{a^{\pm1}, b^{\pm 1} \}$, so $\varphi \in \Aut G$.

\begin{case*}
Assume $G = [16,6]$.
\end{case*}
 Let $a \in S$ with $|a| = 8$. Let $b \in S \smallsetminus \langle a \rangle$.

Assume, for the moment, that $|b| = 8$. Write $b^2 = a^{2r}$, for some odd~$r$. Then we must have $\widetilde b^2 = \widetilde a^{2r}$. This implies that if $i,j \in \{\pm1\}$, such that 
	$$ e = \widetilde b^{2i} \, \widetilde a^{-2rj}, $$
then $i = j$ (since $|\widetilde b^2| = |b^2| > 2$). We conclude (much as in \cref{WordMakesAut}) that $\varphi(v s) = \varphi(v) \, \widetilde s$, for all $v \in \langle a, b \rangle = G$ and $s \in \{a^{\pm1}, b^{\pm 1} \}$, so $\varphi \in \Aut G$.

We may now assume $|b| \in \{2,4\}$, so $b^2 \in \langle a^4 \rangle$. Note that, since $b \notin \langle a \rangle$, we have $b a b^{-1} a^3 = e$. Suppose $i,j,k,\ell \in \{\pm1\}$, with
	$$ e = \widetilde b^i \, \widetilde a^j \, \widetilde b^{-k} \, \widetilde a^{3\ell}
	= \widetilde b^{i-k} \, \widetilde a^{5j+3\ell}
	\equiv a^{j-\ell} \pmod{\langle \widetilde a^4 \rangle}
	,$$
so $j = \ell$. Then we must also have $i = k$.
We conclude from \cref{WordMakesAut} that $\varphi(v s) = \varphi(v) \, \widetilde s$, for all $v \in \langle a, b \rangle = G$ and $s \in \{a^{\pm1}, b^{\pm 1} \}$, so $\varphi \in \Aut G$.

\begin{case*}
Assume $G = [24,1]$.
\end{case*}
Let $a \in S$ with $|a| = 8$, and let $b \in S$, such that $b \notin \langle a \rangle$. Write $\widetilde b = \widetilde a^r x$, where $\langle x \rangle = \ZZ_3$. 
We may assume $\widetilde b$ has prime-power order \csee{AssumeSPrimePower}, and we know that $\widetilde a^2$ centralizes~$x$, so either $r$~is odd or $r = 0$.

Assume, for the moment, that $r = 0$, which means $\langle \widetilde b \rangle = \ZZ_3 = \langle b \rangle$. Then $a$ inverts~$b$, so $aba^{-1}b = e$. Suppose $i,j,k,\ell \in \{\pm1\}$, with
	$$ e = \widetilde a^{i} \, \widetilde b^j \, \widetilde a^{-k} \, \widetilde b^\ell
	 = \widetilde a^{i- k} \, \widetilde b^{-j + \ell} 
	 .$$
Since the exponents of~$\widetilde a$ and~$\widetilde b$ must be~$0$, we have $i = k$ and $j = \ell$.
We conclude from \cref{WordMakesAut} that $\varphi(v s) = \varphi(v) \, \widetilde s$, for all $v \in \langle a, b \rangle = G$ and $s \in \{a^{\pm1}, b^{\pm 1} \}$, so $\varphi \in \Aut G$.

We may now assume that $r$ is odd.
The proof of \cref{AssumeSPrimePower} shows there is no harm in replacing $b$ with a power that is relatively prime to~$8$, so we may assume $r  = 1$.
Since $a^2 \in Z(G)$, we have $a^2 b a^{-2} b^{-1} = e$. Suppose $i,j,k,\ell \in \{\pm1\}$, with
	$$ e = \widetilde a^{2i} \, \widetilde b^j \, \widetilde a^{-2k} \, \widetilde b^{-\ell}
	 = \widetilde a^{2i- 2k} \, \widetilde b^{j- \ell} 
	 \equiv \widetilde a^{j - \ell} \pmod{\langle \widetilde a^4, x \rangle}
	 .$$
Then $j = \ell$. Therefore $\widetilde a^{2i- 2k} = e$, so $i = k$. 
For $v \in G$ with $\varphi(va) = \varphi(v) \, \widetilde a$, we conclude from the proof of \cref{WordMakesAut} that $\varphi(vba) = \varphi(vb) \, \widetilde a$. In addition, interchanging the roles of $a$ and~$b$ tells us that if $\varphi(vb) = \varphi(v) \, \widetilde b$, then $\varphi(v a b) = \varphi(va) \, \widetilde b$. We conclude that $\varphi(v s) = \varphi(v) \, \widetilde s$, for all $v \in \langle a, b \rangle = G$ and $s \in \{a^{\pm1}, b^{\pm 1} \}$, so $\varphi \in \Aut G$.

\begin{case*}
Assume $G = [24,13]$.
\end{case*}
 We may assume $|s| \in \{2,3\}$, for all $s \in S$ \csee{AssumeSPrimePower}. Let $a \in S$ with $|a| = 3$. 
Choose $b \in S$, such that $b \notin A_4$. Since every element of order~$3$ is contained in~$A_4$, we must have $|b| = 2$. 

Assume, for the moment, that $\langle a, b \rangle = G$.  
Note that $(a b a^{-1} b)^2 = e$, and, for convenience, let $\widetilde b_m = \widetilde a^{-m} \, \widetilde b \, \widetilde a^m$ for $m \in \ZZ$.
Suppose $i,j,k,\ell \in \{\pm1\}$, with
	$$ e = \widetilde a^i \, \widetilde b \, \widetilde a^{-j} \, \widetilde b \, \widetilde a^k \, \widetilde b \, \widetilde a^{-\ell} \, \widetilde b 
	= \widetilde a^{i - j + k - \ell} \cdot  
	\widetilde b_{-j + k - \ell} \, \widetilde b_{k - \ell} \,\widetilde b_{-\ell} \,\widetilde b .$$
This implies $k = \ell$, for otherwise $0$, $-\ell$, and $k - \ell$ are all distinct modulo~$3$, so $\widetilde b_{k - \ell} \,\widetilde b_{-\ell} \,\widetilde b \equiv \widetilde b_1 \, \widetilde b_{-1} \, \widetilde b \equiv e \pmod{\ZZ_2}$, but $b_{-j + k - \ell}$ is obviously nontrivial$\pmod{\ZZ_2}$. (Then, since the exponent of~$\widetilde a$ is~$0$, we must also have $i = j$.)
We conclude from \cref{WordMakesAut} that $\varphi(v s) = \varphi(v) \, \widetilde s$, for all $v \in \langle a, b \rangle = G$ and $s \in \{a^{\pm1}, b \}$, so $\varphi \in \Aut G$.

We may now assume $\langle a, s \rangle \neq G$, for all $s \in S$. Then, since $b \notin A_4$ (and $b$ is an element of order~$2$ in~$S$), we see that $b \in Z(G)$. Since $Z(G)$ has only one nontrivial element, this implies that $S = (S \cap A_4) \cup \{b\}$, and that $\widetilde b = b$ (since only $b$-edges make $4$-cycles with the edges of every other colour). Therefore
	$$\Cay(G;S) \iso \Cay(A_4; S \cap A_4) \times \Cay \bigl( \ZZ_2; \{b\} \bigr) ,$$
and $\varphi(b) = b$. Since $A_4$ is strongly CCA, it is now easy to see that $\varphi \in \Aut G$.
\end{proof}

\begin{eg} \label{63isCCA}
The following group of order $63$ is CCA:
	$$ G = \ZZ_7 \rtimes \ZZ_9 = \langle\, x, a \mid x^7 = a^9 = e, \ a^{-1} x a = x^2 \,\rangle .$$
\end{eg}

\begin{proof}
Let $\varphi$ be a colour-preserving automorphism of a connected Cayley graph $\Cay(G;S)$, such that $\varphi(e) = e$. We may assume $S$~is either $\{a^{\pm1}, x^{\pm1}\}$ or $\{a^{\pm1}, (ax)^{\pm1} \}$, after discarding redundant generators, applying an automorphism of~$G$, and replacing some elements by appropriate powers (cf.~the proof of \cref{AssumeSPrimePower}).

If $S = \{a^{\pm1}, x^{\pm1}\}$, then we may assume $\varphi(x) = x$, by composing with an automorphism of~$G$. Also, since $\varphi$ is colour-preserving, it must pass to a well-defined automorphism of the cycle $\Cay \bigl( G/\langle x \rangle; \{a^{\pm1}\} \bigr)$, so there exists $\epsilon \in \{\pm1\}$, such that $\varphi(ga) = \varphi(g)\,a^\epsilon$ for all $g \in G$. Then, since $(1,1)$ is the only pair $(\epsilon,\delta) \in \{\pm1\}^2$ that satisfies $a^{-\epsilon} x^\delta a^{\epsilon} = x^2$, we see that $\varphi( x^i a^j) = x^i a^j$ for all $i$ and~$j$, so $\varphi$ is the identity map, which is certainly affine.

Assume, now, that $S = \{a^{\pm1}, (ax)^{\pm1}\}$. Let $a_1 = a$ and $a_2 = xa$. For any $g \in G$ and $\epsilon \in \{\pm1\}$, if $\varphi(g\, a_1) = \varphi(g) \, a_1^\epsilon$, then, since $a_1^3 = a_2^3$ (and $\varphi$ is colour-preserving), we have $\varphi(g\, s^m) = \varphi(g) \, s^{\epsilon m}$, for all~$m$ and all $s \in S$. Since $S$ generates~$G$, this implies $\varphi(g s) = \varphi(g) \, s^\epsilon$ for all $g$ and all $s \in S$. So $\varphi$ is affine.
\end{proof}

\end{appendix}

\end{document}